\newtheorem{lemma}{Lemma}[section]
\newtheorem{theorem}[lemma]{Theorem}
\newtheorem{proposition}[lemma]{Proposition}
\newcommand{\intbar}{\mathop{\int\makebox(-13.5,0){\rule[4pt]{.7em}{0.3pt}}%
\kern-6pt}\nolimits}  
\newcommand{\smallintbar}{\mathop{\int\makebox(-10,0)%
{\rule[4pt]{.6em}{0.3pt}}\kern-6pt}\nolimits}  
\def\proof{\@ifnextchar[\opargproof{\opargproof[\bf Proof. ]}}
\def\opargproof[#1]{\par\noindent {\bf #1}}
\begin{document}

\title[Heat flow with critical exponential nonlinearity]
{The heat flow with a critical exponential nonlinearity}
\author{Tobias Lamm}
\author{Fr\'ed\'eric Robert}
\author{Michael Struwe}
\address[Tobias Lamm]{Department of Mathematics, University 
of British Columbia, Vancouver, BC V6T 1Z2, Canada}
\email{tlamm@math.ubc.ca}
\address[Fr\'ed\'eric Robert]{Universit\'e de Nice-Sophia Antipolis, 
Laboratoire J.-A.Dieudonn\'e, Parc Valrose, 06108 Nice Cedex 2, France}
\email{frobert@math.unice.fr}
\address[Michael Struwe]{Departement Mathematik\\ETH-Z\"urich\\CH-8092 Z\"urich}
\email{michael.struwe@math.ethz.ch}

\date{May 22nd, 2009}

\begin{abstract} We analyze the possible concentration behavior of heat flows
related to the Moser-Trudinger energy and derive quantization results completely 
analogous to the quantization results for solutions of the corresponding elliptic 
equation.
As an application of our results we obtain the existence of critical points of the
Moser-Trudinger energy in a supercritical regime.
\end{abstract} 

\maketitle

\section{Introduction} 
On any bounded domain $\Omega \subset {\mathbb R}^2$
the Moser-Trudinger energy functional 
\begin{equation*}
     E(u)=\frac12\int_{\Omega}(e^{u^2}-1)\, dx
\end{equation*}
for any $\alpha \le 4 \pi$ admits a maximizer in the space 
\begin{equation} \label{1.00}
      M_{\alpha} = \{u \in H^1_0(\Omega);\, u \ge 0, ||\nabla u||^2_{L^2}=\alpha \},
\end{equation}
corresponding to a solution $0 < u \in H^1_0(\Omega)$ of the equation 
\begin{equation} \label{1.01}
   -\Delta u = \lambda u e^{u^2} \hbox{ in } \Omega
\end{equation}
for some $\lambda > 0$; see \cite{Carleson-Chang} and \cite{Flucher}.
Moreover, when $\Omega$ is a ball numerical evidence \cite{Monahan} shows that 
for small $\alpha>4\pi$ there exists a pair of critical points of $E$ in 
$M_{\alpha}$,
corresponding to a relative maximizer and a saddle point of $E$, respectively.
However, standard variational techniques fail in this ``supercritical'' energy range
and ad hoc methods devised to remedy the situation so far have only been partially 
succesful in producing the expected existence results; compare 
\cite{Struwe84}, \cite{Struwe2000}. 
As in various other geometric variational problems a
flow method might turn out to be more useful in this regard.

Given a smooth function $0 \le u_0 \in H^1_0(\Omega)$,
we consider smooth solutions $u=u(t,x)$ to the equation
\begin{equation} \label{1.1}
   u_t e^{u^2} = \Delta u + \lambda u e^{u^2} \hbox{ in } [0,\infty[ \times \Omega
\end{equation}
with initial and boundary data 
\begin{equation} \label{1.2}
   u(0) = u_0, \quad u = 0 \hbox{ on } [0,\infty[ \times \partial \Omega.
\end{equation}
The function $\lambda= \lambda(t)$ may be determined so that the  
Dirichlet integral of $u$ is preserved along the flow. As we shall see, also 
the case where the volume of the evolving metric $g=e^{u^2}g_{{\mathbb R}^2}$ 
is fixed gives rise to interesting applications, and both constraints can
easily be analyzed in parallel.

\subsection{Fixed volume}
Fixing the volume is equivalent to the constraint 
\begin{equation} \label{1.5}
   E(u(t)) =E(u_0) =: c_0
   \hbox{ for all } t,
\end{equation}
which can be achieved by imposing the condition
\begin{equation} \label{1.3}
   \frac{d}{dt}E(u(t)) = \int_{\Omega} u u_t e^{u^2} dx = 
   \lambda \int_{\Omega} u^2 e^{u^2} dx - \int_{\Omega}|\nabla u|^2 \, dx = 0.  
\end{equation}
Clearly, we may assume that $u_0$ does not vanish identically and that $c_0 > 0$; 
otherwise $u \equiv 0$ is the unique smooth solution to \eqref{1.1} - \eqref{1.5}
for any choice of $\lambda(t)$.

Note that when we multiply \eqref{1.1} with $u_t$ and use \eqref{1.3}, upon 
integrating by parts we obtain the relation 
\begin{equation} \label{1.6}
    \int_{\Omega} u_t^2 e^{u^2} dx 
   + \frac12 \frac{d}{dt}\left(\int_{\Omega}  |\nabla u|^2  \, dx\right) 
   = \lambda\frac{d}{dt}E(u(t)) = 0;
\end{equation}
that is, the flow \eqref{1.1} - \eqref{1.5} 
may be regarded as the gradient flow (with respect to the metric $g$)
for the Dirichlet energy with the critical exponential constraint \eqref{1.5}.

Equation \eqref{1.3} and the energy inequality \eqref{1.6}
imply the uniform bound
\begin{equation} \label{2.2}
   \lambda \int_{\Omega} u^2 e^{u^2} dx 
   = \int_{\Omega}  |\nabla u|^2  \, dx \le \int_{\Omega}  |\nabla u_0|^2  \, dx
   =: \Lambda_0.
\end{equation}
Since we can easily estimate 
$e^a \le 1 + 4a$ for $0 \le a \le 1/4$, we have
\begin{equation} \label{2.3}
  \begin{split}
    & \int_{\Omega} u^2 e^{u^2} dx 
      = \int_{\Omega} u^2 (e^{u^2} - 1) \, dx + \int_{\Omega} u^2 \, dx\\ 
    & \ge \frac14 \int_{\Omega} (e^{u^2} - 1) \, dx 
       - \frac14 \int_{\{x \in \Omega; u \le 1/2\}} (e^{u^2} - 1) \, dx
       + \int_{\Omega} u^2 \, dx \ge \frac{E(u)}{2} \ge \frac{c_0}{2},
  \end{split}  
\end{equation}
for all $t$. Therefore, recalling that $c_0 > 0$, from \eqref{2.2} we deduce
that with the constant $\lambda_0 =2\Lambda_0/c_0 > 0$ there holds 
\begin{equation} \label{2.4}
   0 < \lambda(t) \le \lambda_0 \hbox{ for all } t \ge 0.
\end{equation}
Finally, the maximum principle yields that $u \ge 0$.

\subsection{Constant Dirichlet integral}
If, on the other hand, we choose $\lambda$ so that
\begin{equation}\label{0.0}
   \frac12\frac{d}{dt}\bigg(\int_{\Omega}  |\nabla u|^2  \, dx\bigg)
   = - \int_{\Omega} u_t\Delta u \, dx 
   =  \lambda \int_{\Omega} |\nabla u|^2 \, dx
      - \int_{\Omega} |\Delta u|^2e^{-u^2}dx = 0,
\end{equation}
for a solution of \eqref{1.1}, \eqref{1.2} satisfying
\eqref{0.0} the Dirichlet integral is preserved; that is, 
\begin{equation} \label{0.1}
  \int_{\Omega} |\nabla u|^2  \, dx = \int_{\Omega}|\nabla u_0|^2 \, dx = \Lambda_0 ,
\end{equation}
In this case, from \eqref{1.6} we find the equation
\begin{equation} \label{0.2}
    \int_{\Omega} u_t^2 e^{u^2} dx = \lambda\frac{d}{dt}E(u(t)),
\end{equation}
and \eqref{1.1}, \eqref{1.2} with the constraint \eqref{0.1} turns into the 
(positive) gradient flow for the Moser-Trudinger
energy with prescribed Dirichlet integral. Again clearly we may assume that 
$\Lambda_0>0$.

Recalling the identity 
\begin{equation*}
   \frac{d}{dt}E(u(t)) = \lambda \int_{\Omega} u^2 e^{u^2} dx -
   \int_{\Omega}|\nabla u|^2 \, dx  
\end{equation*}
\eqref{0.2} and \eqref{2.3}, for any $t$ we have 
\begin{equation} \label{1.4}
  \frac{c_0}{2} \int_0^t \lambda \, dt \le \Lambda_0 t + E(u(t)) - E(u_0),
\end{equation}
where $c_0=E(u_0)\le E(u(t))$ for all $t \ge 0$. Similarly, from \eqref{0.2} we obtain 
\begin{equation} \label{0.3}
    \int_0^t\big(\lambda^{-1}\int_{\Omega} u_t^2 e^{u^2}\, dx\big)\, dt =
    E(u(t)) - E(u_0).
\end{equation}
Hence we can hope to obtain bounds for solutions of \eqref{1.1}, \eqref{1.2},
\eqref{0.1} whenever the Moser-Trudinger energy is bounded along the flow.

\subsection{Results}
Building on previous results from \cite{Adimurthi-Struwe}, \cite{Druet}, and
\cite{Struwe07}, in this paper we establish the 
following result for the flow \eqref{1.1}, \eqref{1.2} with either the 
constraint \eqref{1.5} or the constraint \eqref{0.1}.

\begin{theorem} \label{thm1.1} 
For any $c_0 > 0$ and any smooth initial data $0 \le u_0 \in H^1_0(\Omega)$ 
satisfying \eqref{1.5} the evolution problem \eqref{1.1} - \eqref{1.5} 
admits a unique smooth solution $u \ge 0$ for all $t > 0$. 
Likewise, for any smooth $0 \le u_0 \in H^1_0(\Omega)$ satisfying \eqref{0.1}
for a given $\Lambda_0>0$ the evolution problem \eqref{1.1}, \eqref{1.2},
\eqref{0.1} admits a unique smooth solution $u \ge 0$ for small $t > 0$ 
which can be continued smoothly for all $t > 0$, provided that $E(u(t))$ remains 
bounded. 
In both cases, for a suitable sequence $t_k \rightarrow \infty$ the functions 
$u(t_k) \rightarrow u_{\infty}$ weakly in $H^1_0(\Omega)$, where 
$u_{\infty}\in H^1_0(\Omega)$ is a solution to the problem \eqref{1.01}
for some constant $\lambda_{\infty} \ge 0$. Moreover, either  
$u(t_k) \rightarrow u_{\infty}$ strongly in $H^1_0(\Omega)$,
$\lambda_{\infty} > 0$, and $0 < u_{\infty}\in H^1_0(\Omega)$ 
satisfies, respectively, \eqref{1.5} or \eqref{0.1}, or there exist 
$i_* \in {\mathbb N}$ and 
points $x^{(i)} \in \overline{\Omega}$, $l_i \in {\mathbb N}$, $1 \le i \le i_*$, 
such that as $k \rightarrow \infty$
we have 
\begin{equation*}
  |\nabla u(t_k)|^2dx \overset{w^*}{\rightharpoondown} |\nabla u_{\infty}|^2dx 
  + \sum_{i=1}^{i_*} 4\pi l_i\delta_{x^{(i)}}
\end{equation*}
weakly in the sense of measures. By \eqref{2.2} or \eqref{0.1} then
necessarily $4\pi \sum_{i=1}^{i_*}l_i \le \Lambda_0$.
\end{theorem}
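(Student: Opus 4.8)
The proof naturally splits into three parts: (i) short-time existence and uniqueness of a smooth solution; (ii) global continuation; (iii) the concentration/quantization dichotomy as $t_k \to \infty$.

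For (i), I would rewrite \eqref{1.1} as a quasilinear parabolic equation $u_t = e^{-u^2}(\Delta u + \lambda u e^{u^2})$, which is uniformly parabolic as long as $u$ stays bounded in $C^0$ (and $e^{-u^2}$ stays bounded below). For the fixed-volume constraint \eqref{1.5}, $\lambda(t)$ is determined by \eqref{1.3} as the explicit quotient $\lambda = \int_\Omega |\nabla u|^2\,dx / \int_\Omega u^2 e^{u^2}\,dx$, which by \eqref{2.3} has a positive denominator; substituting this back makes \eqref{1.1} a nonlocal but still parabolic equation. For the Dirichlet constraint \eqref{0.1}, $\lambda$ is given by \eqref{0.0} as $\lambda = \int_\Omega |\Delta u|^2 e^{-u^2}\,dx / \int_\Omega |\nabla u|^2\,dx$, with positive denominator $\Lambda_0$. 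In both cases standard fixed-point/linearization arguments (Banach fixed point in a parabolic Hölder space, or Galerkin together with the energy identity \eqref{1.6}) give a unique smooth solution on a maximal interval $[0,T)$; $u \ge 0$ follows from the maximum principle since the zero boundary data is nonnegative and $u_0 \ge 0$.

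For (ii), the continuation criterion is that $u$ remains bounded in a high enough norm. Here one uses the \emph{a priori} bounds already recorded in the excerpt: the energy identity \eqref{1.6} gives $\int_\Omega |\nabla u(t)|^2\,dx \le \Lambda_0$ for the fixed-volume flow, hence \eqref{2.4} $0 < \lambda(t) \le \lambda_0$; for the Dirichlet flow the Dirichlet integral is literally conserved by \eqref{0.1}, and \eqref{1.4}, \eqref{0.3} bound $\int_0^t \lambda\,dt$ and $\int_0^t \lambda^{-1}\int_\Omega u_t^2 e^{u^2}\,dt$ in terms of $\Lambda_0 t + E(u(t)) - E(u_0)$, which is finite precisely when $E(u(t))$ stays bounded --- this is where the hypothesis in the second half of the theorem enters. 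With a uniform $H^1$ bound and a uniform (or locally integrable) bound on $\lambda$, the Moser--Trudinger inequality controls $e^{u^2}$ in every $L^p$, and then parabolic $L^p$ and Schauder estimates bootstrap to uniform smoothness on compact time intervals away from possible concentration; standard arguments then extend the solution to all $t > 0$ (in the fixed-volume case unconditionally, in the Dirichlet case under the stated energy bound).

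For (iii), the energy inequality \eqref{1.6} (resp. \eqref{0.3}) shows $\int_0^\infty \int_\Omega u_t^2 e^{u^2}\,dx\,dt < \infty$ (resp. the weighted version with $\lambda^{-1}$), so one may select $t_k \to \infty$ along which $\int_\Omega u_t^2 e^{u^2}\,dx \to 0$ and, by the uniform bounds, $\lambda(t_k) \to \lambda_\infty \ge 0$ and $u(t_k) \rightharpoonup u_\infty$ weakly in $H^1_0(\Omega)$. Along such a sequence the flow equation \eqref{1.1} becomes, in the limit, $-\Delta u_\infty = \lambda_\infty u_\infty e^{u_\infty^2}$ in the sense of distributions, i.e. \eqref{1.01}; making this rigorous requires passing to the limit in the nonlinear term $\lambda u e^{u^2}$, which needs the concentration-compactness analysis of the Moser--Trudinger functional. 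This is the main obstacle: I expect this is exactly where the cited work of Adimurthi--Struwe, Druet, and Struwe is invoked. The dichotomy is then: if no concentration occurs, $u(t_k) \to u_\infty$ strongly in $H^1_0$, so the constraint \eqref{1.5} or \eqref{0.1} passes to the limit, forcing $\Lambda_0 = \|\nabla u_\infty\|_2^2 > 0$, hence (by \eqref{2.3}-type estimates) $\lambda_\infty > 0$ and $u_\infty > 0$ by the strong maximum principle; otherwise the defect measure $\mu = \lim |\nabla u(t_k)|^2\,dx - |\nabla u_\infty|^2\,dx$ is a nonnegative sum of atoms, and the quantization theorem --- each atom has mass exactly $4\pi l_i$ with $l_i \in \mathbb{N}$, the analogue for the flow of Druet's quantization for the elliptic equation \eqref{1.01} --- identifies the concentration points $x^{(i)}$ and multiplicities $l_i$. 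Finally $4\pi\sum l_i \le \Lambda_0$ is immediate from testing the weak-$*$ convergence against $1$ and using \eqref{2.2} or \eqref{0.1}. The crux throughout is the blow-up analysis needed to prove the $4\pi$-quantization of the concentrated energy; everything else is a (lengthy but) standard combination of parabolic regularity theory and the energy monotonicity built into \eqref{1.6}.
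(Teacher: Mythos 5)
Your parts (i) and (iii) are broadly in line with the paper, but part (ii) contains a genuine gap. You write that ``with a uniform $H^1$ bound and a uniform (or locally integrable) bound on $\lambda$, the Moser--Trudinger inequality controls $e^{u^2}$ in every $L^p$'', and that one can then bootstrap ``away from possible concentration''. This is false as a \emph{uniform} statement: the Moser--Trudinger inequality \eqref{6.1} only controls $\int_\Omega e^{4\pi u^2/\|\nabla u\|_{L^2}^2}dx$, so when $\Lambda_0>4\pi$ the bound $\|\nabla u(t)\|_{L^2}^2\le\Lambda_0$ gives no uniform $L^p$ (not even uniform $L^1$-equi-integrability) control of $e^{u^2}$ --- indeed, if it did, concentration could never occur and the whole quantization analysis would be vacuous. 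Moreover ``smoothness away from possible concentration'' is not enough for global existence; one must exclude finite-time degeneration of the parabolicity. The paper's actual mechanism is different and pointwise: writing \eqref{1.1} as $u_t-e^{-u^2}\Delta u=\lambda u$, the maximum principle shows that $e^{-\int_0^t\lambda}\,\sup u$ is non-increasing, i.e.\ \eqref{2.5}, and then \eqref{2.4} (fixed volume) or \eqref{1.4} (fixed Dirichlet integral, using the hypothesis that $E(u(t))$ stays bounded) bounds $\int_0^t\lambda\,ds$ linearly in $t$, giving Lemma \ref{lemma0.1}: $\|u(t)\|_{L^\infty}\le C_1e^{\lambda_1 t}\|u_0\|_{L^\infty}$. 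This exponential-in-time but finite $L^\infty$ bound makes the equation uniformly parabolic on every finite time interval, and only then do the standard parabolic arguments you invoke apply. Your proposal is missing this $L^\infty$ estimate, and without it the continuation argument does not close.

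Two further points, less critical but worth noting. First, for the limit equation you do not need concentration-compactness: the uniform bound \eqref{2.2}, $\lambda\int_\Omega u^2e^{u^2}dx\le\Lambda_0$, gives equi-integrability of $\lambda u e^{u^2}$ and the Vitali convergence theorem identifies $u_\infty$ as a weak solution of \eqref{1.01} directly; Moser--Trudinger is then used only to upgrade $u_\infty$ itself to a smooth solution. Second, your choice of $t_k$ (unweighted decay of $\int_\Omega u_t^2e^{u^2}dx$ from \eqref{3.1}) is not sufficient as input for the blow-up and quantization analysis: since $\lambda(t_k)$ may tend to $0$, one needs the $\lambda^{-1}$-weighted decay \emph{and} $\|u(t_k)\|_{L^\infty}\to\infty$ along the same sequence, which is exactly the content of Lemma \ref{lemma4.1} and requires a separate argument (again based on \eqref{2.5} and the bounds on $\int\lambda\,dt$), not merely a mean-value selection from the energy inequality. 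Granting Theorems \ref{thm4.1} and \ref{thm5.1} as black boxes for the quantization itself is consistent with the structure of the paper, but the refined selection of $t_k$ is part of the proof of Theorem \ref{thm1.1} and cannot be skipped.
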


The quantization result in the case of divergence of the flow 
relies on the precise microscopic description of blow-up given in
Sections 4 and 5; see in particular Theorems \ref{thm4.1} and \ref{thm5.1}.
Their derivation will take up the major part of this paper.
These results are in complete analogy with the 
results of Adimurthi-Struwe \cite{Adimurthi-Struwe} and Druet \cite{Druet}
for solutions of the corresponding elliptic equation \eqref{1.01}.

Note that our equation \eqref{1.1} is similar to the equation for 
scalar curvature flow. In $m=2$ space dimensions this latter flow corresponds 
to the Ricci flow studied by Hamilton \cite{Hamilton} 
and Chow \cite{Chow}; see \cite{Struwe02} for a more analytic approach.
For $m\ge 3$ the scalar curvature flow is the Yamabe flow analyzed by 
Ye \cite{Ye}, Schwetlick-Struwe \cite{Schwetlick-Struwe}, and 
Brendle \cite{Brendle05}, \cite{Brendle07}.
Surprisingly, these geometric flows can be shown to always converge.
This stands in contrast to the behavior of semi-linear parabolic flows 
with polynomial nonlinearities that were studied for instance by Giga \cite{Giga} or, 
more recently, Tan \cite{Tan}, where the term involving the time derivative
is not modulated by the solution and where we may observe blow-up in finite time. 

Even though our equation \eqref{1.1} does not seem to have an obvious geometric
interpretation, we are able to show that its blow-up behavior (as long as the 
energy stays bounded) is rigidly determined by the properties of Liouville's equation
in the plane, that is, by the properties of Gauss' equation on $S^2$.
We do not know if the analogy with the $2$-dimensional Ricci flow extends even further; 
in particular, we do not know if all solutions to either 
\eqref{1.1} - \eqref{1.5} or \eqref{1.1}, \eqref{1.2} with the constraint \eqref{0.1}
and having uniformly bounded energy smoothly converge as $t \rightarrow \infty$. 

Even so Theorem \ref{thm1.1} is sufficient to yield existence of saddle-point 
solutions for \eqref{1.01} in supercritical regimes of large energy. 
In the final Section 6 we illustrate this with two examples where we use 
\eqref{1.1}, \eqref{1.2} with either the constraint \eqref{1.5} or \eqref{0.1}.
For a domain $\Omega \subset {\mathbb R}^2$ with $vol(\Omega) = \pi$ we define 
$$
  c_{4\pi}(\Omega):= 
  \sup_{u \in H^1_0 (\Omega); ||\nabla u||^2_{L^2(\Omega)} \le 4\pi} E(u).
$$
Note that we always have $c_{4\pi}(\Omega)\le c_{4\pi}(B_1(0)=:c_*$.
Our first result then provides the following analogue of  Coron's result \cite{Coron}; 
it also is related to Theorem 1.1 in \cite{Struwe2000}.

\begin{theorem} \label{thm1.2}
For any $c^* > c_*$ there are numbers $R_1 > R_2 > 0$ with the following
property. Given any domain
$\Omega \subset {\mathbb R}^2$ with $vol(\Omega) = \pi$ containing the annulus 
$B_{R_1} \setminus B_{R_2} (0)$ and such that $0 \notin \overline{\Omega}$,
for any constant $c_0$ with $c_{4\pi}(\Omega)< c_0< c^*$ problem \eqref{1.01} 
admits a positive solution $u$ with $E(u) =c_0$.
\end{theorem}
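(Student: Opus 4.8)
The plan is to use the heat flow \eqref{1.1}, \eqref{1.2} with the constraint \eqref{1.5}, starting from a carefully chosen initial datum $u_0$ supported in the annulus $B_{R_1}\setminus B_{R_2}(0)$, and to run the global existence and convergence machinery of Theorem \ref{thm1.1}. The point is that for this constrained flow the energy $E(u(t))\equiv c_0$ is frozen, so Theorem \ref{thm1.1} applies unconditionally: along a sequence $t_k\to\infty$ we obtain $u(t_k)\rightharpoonup u_\infty$ weakly in $H^1_0(\Omega)$, with $u_\infty$ a nonnegative solution of \eqref{1.01} for some $\lambda_\infty\ge 0$, and either strong convergence with $0<u_\infty$ satisfying $E(u_\infty)=c_0$ — which is exactly the desired conclusion — or concentration with $|\nabla u(t_k)|^2\,dx\rightharpoondown |\nabla u_\infty|^2\,dx+\sum_{i=1}^{i_*}4\pi l_i\,\delta_{x^{(i)}}$ and $4\pi\sum_i l_i\le\Lambda_0$. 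Thus everything reduces to \emph{excluding} the concentration alternative by an energy-level argument, just as in Coron's original topological trick for the Yamabe problem on annular domains.

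First I would construct $u_0$: take a fixed near-extremal function $v$ for $c_{4\pi}(B_1(0))=c_*$, i.e.\ $\|\nabla v\|_{L^2}^2\le 4\pi$ and $E(v)$ close to $c_*$, scale and translate it to be concentrated near some point of the annulus $B_{R_1}\setminus B_{R_2}(0)$ — this is possible precisely because $E$ is conformally (scaling) invariant in the relevant sense and the annulus, once $R_1/R_2$ is large enough, contains a large ball; then adjust so that $E(u_0)=c_0$ with $c_{4\pi}(\Omega)<c_0<c^*$. The numbers $R_1>R_2$ are chosen depending only on $c^*$ so that such a $u_0$ exists with $\|\nabla u_0\|_{L^2}^2=\Lambda_0$ controlled — say $\Lambda_0<8\pi$, or more precisely $\Lambda_0/4\pi<2$, which forces $\sum_i l_i\le 1$ in any concentration scenario, i.e.\ at most one bubble of mass exactly $4\pi$.

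Next I would rule out that single bubble. If concentration occurred at one point $x^{(i)}$ with $l_i=1$, then $4\pi\le\Lambda_0<8\pi$ and the weak limit $u_\infty$ carries Dirichlet energy $\Lambda_0-4\pi<4\pi$. The key inequality is then a quantization/loss-of-energy estimate: by the bubbling analysis (Theorems \ref{thm4.1}, \ref{thm5.1}, and the relation \eqref{1.5} $E(u(t))\equiv c_0$), the energy $c_0$ must split as $c_0=E(u_\infty)+(\text{energy absorbed by the bubble})$; but a standard $4\pi$-bubble of the Moser–Trudinger functional carries an energy contribution bounded by what a genuine $B_1$-concentration can produce, and combined with $E(u_\infty)\le c_{4\pi}(\Omega)$ (since $\|\nabla u_\infty\|_{L^2}^2<4\pi$ and $u_\infty\in H^1_0(\Omega)$) this gives $c_0\le c_{4\pi}(\Omega)+(\text{bubble energy})$. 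The crux is to show the bubble energy is $<c_*-c_{4\pi}(\Omega)+$ (something forcing $c_0<c^*$ to be contradicted) — i.e.\ that one bubble plus a sub-$c_{4\pi}(\Omega)$ background cannot reach the energy level $c_0>c_{4\pi}(\Omega)$ once $R_1,R_2$ (hence the geometry of $\Omega$) are fixed, because $0\notin\overline\Omega$ prevents the bubble from sitting efficiently at the puncture. This last point — the precise bookkeeping of energy lost to a bubble forming in a punctured domain, showing it falls strictly below the gap $c^*-c_{4\pi}(\Omega)$ — is the main obstacle, and it is exactly where the hypotheses $vol(\Omega)=\pi$, $0\notin\overline\Omega$, and the annular containment are used.

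Finally, once the concentration alternative is excluded, Theorem \ref{thm1.1} forces strong $H^1_0$ convergence to $0<u_\infty\in H^1_0(\Omega)$ solving \eqref{1.01} with $\lambda_\infty>0$ and $E(u_\infty)=c_0$, which is the claimed positive solution. I would present this in the order: (i) fix $R_1>R_2$ and construct $u_0$ with $E(u_0)=c_0$, $\Lambda_0<8\pi$; (ii) invoke Theorem \ref{thm1.1} to get the dichotomy; (iii) in the concentration case, use $\Lambda_0<8\pi$ to reduce to a single $4\pi$-bubble and derive the energy split $c_0=E(u_\infty)+\Theta$ with $\Theta$ the bubble's contribution; (iv) bound $E(u_\infty)\le c_{4\pi}(\Omega)$ and $\Theta$ by a quantity depending on the geometry that, for the chosen $R_1,R_2$, makes $E(u_\infty)+\Theta<c_0$, a contradiction; (v) conclude strong convergence and positivity.
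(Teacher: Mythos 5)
There is a genuine gap, and it sits exactly where you say your ``main obstacle'' is. Your plan runs the volume-constrained flow from a \emph{single} bubble-like initial datum and then tries to exclude the one-bubble alternative by splitting the Moser--Trudinger energy as $c_0=E(u_\infty)+\Theta(\hbox{bubble})$ and estimating $\Theta$ geometrically. Neither half of this works as stated. First, the quantization results of the paper (Theorems \ref{thm4.1}, \ref{thm5.1}, and the dichotomy in Theorem \ref{thm1.1}) quantize the \emph{Dirichlet} energy $\int|\nabla u_k|^2dx$, not the Moser--Trudinger energy $E$; no splitting of $E$ into ``background plus bubble contribution'' is available, and the behavior of $E$ along concentrating sequences is precisely the delicate Carleson--Chang/Flucher issue that the paper never needs to quantify. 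Second, and more fundamentally, for a single initial datum nothing rules out the scenario in which the flow concentrates with $\int|\nabla u(t_k)|^2dx\to 4\pi$ (one bubble, $l=1$), $u_\infty\equiv 0$, $\lambda_\infty=0$, while $E(u(t))\equiv c_0$ is conserved: this is exactly the loss-of-compactness configuration described by Lemma \ref{lemma6.1} (concentration of $|\nabla u_k|^2dx$ at a single point of $\overline\Omega$), and it is perfectly consistent with all the constraints you impose ($\Lambda_0<8\pi$, $E\equiv c_0>c_{4\pi}(\Omega)$). The conservation of $E$ only gives $\int|\nabla u(t)|^2dx>4\pi$ at each fixed $t$; it does not give a \emph{uniform-in-time} lower bound $\ge\alpha_0>4\pi$, which is what is needed to contradict the quantized limit $4\pi l$.

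What the paper does instead, and what is missing from your proposal, is Coron's topological device: one runs the flow not from one datum but from a whole family $w_{s,x_0}$, $0<s\le 1$, $|x_0|=3R$, of Moser-type functions supported in the annulus and normalized so that $E(w_{s,x_0})=c_0$ and $4\pi<\alpha_{s,x_0}<8\pi$. Lemma \ref{lemma6.2} shows, via the center-of-mass map $m(u)$ and the hypothesis $0\notin\overline\Omega$, that if every member of the family relaxed to Dirichlet energy $4\pi$ then one could build a homotopy contracting the identity of $S^1\simeq\partial B_{3R}(0)$ to a constant, which is impossible; hence there is a uniform $\alpha_0>4\pi$ and, after passing to an accumulation point $(s_1,x_0)$ of the maximizing parameters, a single initial datum whose flow satisfies $\int|\nabla u(t)|^2dx\ge\alpha_0$ for \emph{all} $t$, while staying $<8\pi$ by \eqref{1.6}. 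Only with this trapping does blow-up lead to a contradiction: if it occurred, $\|\nabla u_\infty\|_{L^2}^2<4\pi$ forces $\lambda_\infty=0$ (else dominated convergence would give $E(u_\infty)=c_0>c_{4\pi}(\Omega)$, contradicting \eqref{6.3}), hence $u_\infty\equiv 0$ and the Dirichlet energies converge to $4\pi l$, which cannot lie in $[\alpha_0,8\pi)$. So the roles of $0\notin\overline\Omega$ and of the annulus are purely topological (they make the $S^1$-family and the map $\pi_{S^1}\circ m$ well defined), not a statement that ``the bubble cannot sit efficiently at the puncture.'' To repair your proof you would have to replace steps (i), (iii), (iv) by this family-plus-homotopy argument (or supply an entirely new mechanism yielding the uniform bound $\int|\nabla u(t)|^2dx\ge\alpha_0>4\pi$ for some initial datum at level $E=c_0$); your steps (ii) and (v) are then essentially the paper's argument.
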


Our second result completes Theorem 1.8 from \cite{Struwe84}.

\begin{theorem}\label{thm1.3}
There exists a number $\alpha_1 \in ]4\pi, 8\pi]$ such that for any 
$4\pi<\alpha<\alpha_1$ there exists a pair of solutions 
$\underline{u}, \overline{u} \in M_{\alpha}$ of \eqref{1.01} with 
$0 < E(\underline{u}) < E(\overline{u})$.
\end{theorem}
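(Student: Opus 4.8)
The plan is to use the flow \eqref{1.1}, \eqref{1.2} with the fixed-Dirichlet-integral constraint \eqref{0.1} at level $\Lambda_0=\alpha$, starting from a well-chosen family of initial data, and to run a mountain-pass argument in the spirit of \cite{Struwe84}. First I would recall from \cite{Carleson-Chang}, \cite{Flucher} that for $\alpha=4\pi$ the supremum of $E$ over $M_{4\pi}$ is attained at a positive solution $u_0$ of \eqref{1.01} with some $\lambda_0>0$, and that for $\alpha$ slightly above $4\pi$ there still is a relative maximizer $\underline{u}\in M_\alpha$ of $E$ with $E(\underline{u})$ close to $\sup_{M_{4\pi}}E$; this gives the lower solution and the number $E(\underline{u})>0$. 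The point is that near $\underline{u}$ the functional $E$ restricted to $M_\alpha$ has a strict local maximum, so one can find a small sphere $S$ (in the $H^1_0$-topology) around $\underline{u}$ inside $M_\alpha$ on which $E$ is strictly below $E(\underline{u})$, while there is a ``mountain pass'' configuration: a path in $M_\alpha$ from $\underline{u}$ to a function with $E$-value below $E(\underline{u})$ that must cross such a sphere, so that the min-max value
\[
  \beta_\alpha:=\inf_{\gamma}\max_{s}E(\gamma(s))>E(\underline{u})
\]
over appropriate paths $\gamma$ in $M_\alpha$ is a genuine critical level strictly above $E(\underline{u})$. The existence of a comparison function making $\beta_\alpha$ finite and the non-triviality of the min-max (i.e. $\beta_\alpha>E(\underline u)$) is exactly the content of Theorem 1.8 of \cite{Struwe84}, for $\alpha$ in some interval $]4\pi,\alpha_1[$ with $\alpha_1\le 8\pi$.

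The core of the argument is then to show that $\beta_\alpha$ is attained. I would start the flow \eqref{1.1}, \eqref{1.2}, \eqref{0.1} from initial data $u_0\in M_\alpha$ lying on an almost-optimal path, chosen with $E(u_0)$ close to $\beta_\alpha$; by \eqref{0.2} the energy $E(u(t))$ is non-decreasing along the flow, and by Theorem \ref{thm1.1} the solution exists for all $t>0$ as long as $E(u(t))$ stays bounded — and here $E(u(t))\le c_*$ (or more generally $E(u(t))\le c_{4\pi}$-type bounds are irrelevant; the relevant a priori bound is $E(u(t))\le \sup_{M_\alpha}E<\infty$ by the Moser–Trudinger inequality for $\alpha\le 8\pi<\infty$, since $\|\nabla u\|_{L^2}^2=\alpha$ is fixed). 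So the flow exists globally. By \eqref{0.3}, $\int_0^\infty\big(\lambda^{-1}\int_\Omega u_t^2e^{u^2}\big)dt<\infty$, hence along a sequence $t_k\to\infty$ we have $\lambda^{-1}\int_\Omega u_t^2e^{u^2}\to 0$, and Theorem \ref{thm1.1} gives $u(t_k)\rightharpoonup u_\infty$ weakly in $H^1_0$, with $u_\infty$ a solution of \eqref{1.01}; moreover $E(u(t_k))\to$ some limit $\ge E(u_0)$ which, by the min-max construction together with a deformation-lemma argument (the flow decreases $E$ nowhere and moves below $\beta_\alpha$ would contradict minimality of the path), equals $\beta_\alpha$. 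The remaining dichotomy is: either $u(t_k)\to u_\infty$ strongly, in which case $\overline{u}:=u_\infty\in M_\alpha$ solves \eqref{1.01}, $E(\overline{u})=\beta_\alpha>E(\underline{u})$, and we are done; or concentration occurs with $4\pi\sum l_i\le\alpha<8\pi$, forcing $i_*=1$, $l_1=1$, and $4\pi$ of the Dirichlet energy concentrating at one point.

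The main obstacle is ruling out this blow-up alternative, and this is where the restriction $\alpha<\alpha_1\le 8\pi$ and the precise quantization from Sections 4–5 enter. If one bubble of energy exactly $4\pi$ splits off, then $u_\infty$ carries Dirichlet energy $\alpha-4\pi\in[0,4\pi[$, and one derives an upper bound for $\beta_\alpha=\lim E(u(t_k))$ in terms of $E(u_\infty)$ plus the energy contributed by a standard ``Liouville bubble'' — the latter contributes, in the limit, the Carleson–Chang-type constant, so that $\beta_\alpha\le \sup_{M_{\alpha-4\pi}}E + (\text{bubble contribution})$. One then chooses $\alpha_1$ small enough (close to $4\pi$) that this upper bound is strictly smaller than the min-max value $\beta_\alpha$ guaranteed by Theorem 1.8 of \cite{Struwe84} — a contradiction. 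Equivalently, one shows that for $\alpha$ just above $4\pi$ the mountain-pass level lies strictly below the ``first critical threshold at infinity'' determined by $u_\infty + $ one bubble, exactly as in the Carleson–Chang analysis of the borderline case; this energy comparison, and the verification that the quantized loss of compactness cannot occur below that threshold, is the delicate step and is precisely what forces $\alpha_1$ to be some number in $]4\pi,8\pi]$ rather than all of $]4\pi,8\pi]$. Once compactness is restored, $\overline u$ is the desired second (higher-energy) solution, and $\underline u\ne\overline u$ since $E(\underline u)<\beta_\alpha=E(\overline u)$.
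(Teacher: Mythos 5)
Your overall strategy (use the two-level structure near the $4\pi$-maximizers from \cite{Struwe84}, run the flow with the constraint \eqref{0.1}, and invoke the quantization of Theorem \ref{thm1.1} to exclude blow-up for $\alpha$ close to $4\pi$) is in the right spirit, but the variational geometry you set up is reversed and the argument as written does not close. In the actual situation the relative maximizer near $K_{4\pi}$ is the \emph{higher}-energy solution $\overline{u}$, and the second solution is a saddle point at a level \emph{strictly below} $\beta^*_{\alpha}=E(\overline{u})$: since $E$ is unbounded from above on $M_\alpha$ for $\alpha>4\pi$ (this is \eqref{6.4}), one considers paths from $\overline{u}$ to functions of very large energy, which must dip through the annular region $N_{\alpha,2\varepsilon}\setminus N_{\alpha,\varepsilon}$ where $E<\beta^*_\alpha$, and takes $\beta_\alpha=\sup_\gamma\inf_s E(\gamma(s))<\beta^*_\alpha$. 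Your inf-sup level $\inf_\gamma\max_s E(\gamma(s))$ with paths starting at a strict local maximum $\underline u$ and ending below $E(\underline u)$ cannot be strictly larger than $E(\underline u)$: a sphere on which $E$ is \emph{below} $E(\underline u)$ is no barrier, so the claimed inequality $\beta_\alpha>E(\underline u)$ does not follow, and the level you define is trivially attained at the starting point. Moreover, the flow \eqref{1.1}, \eqref{1.2}, \eqref{0.1} \emph{increases} $E$ by \eqref{0.2}, so it is useless as a deformation for an inf-sup level (it pushes path maxima up, not down); it is precisely adapted to the sup-inf structure, where one shows that the deformed path stays admissible, selects the point $s(t)$ of minimal energy, and obtains a single trajectory whose energy is trapped in $]c_{\alpha-4\pi},\beta_\alpha]$. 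Your claimed a priori bound $E(u(t))\le\sup_{M_\alpha}E<\infty$ ``by Moser--Trudinger for $\alpha\le 8\pi$'' is false — \eqref{6.4} says this supremum is infinite for every $\alpha>4\pi$ — and boundedness of $E$ along the flow is exactly what must be engineered (in the paper by stopping the flow at a level $\beta<\beta^*_\alpha$ and by the trapping above). You also omit the compactness of the maximizer set $K_{4\pi}$ (Lemma \ref{lemma6.4}), which is what makes Lemma 5.3 of \cite{Struwe84} available on a general domain.

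The blow-up exclusion you sketch is also not the one that works here, and it is left as an unproven ``delicate step''. The quantization of Theorem \ref{thm1.1} concerns the Dirichlet integral, not $E$: when $\lambda_\infty>0$ one has $E(u_k)\to E(u_\infty)$ with \emph{no} bubble contribution to $E$ (via \eqref{0.5} and dominated convergence), so there is no Carleson--Chang-type threshold computation to perform; instead one gets $E(u_\infty)\le c_{\alpha-4\pi}$ because $\|\nabla u_\infty\|_{L^2}^2\le\alpha-4\pi$, which contradicts the lower bound $E(u(t))>c_{\alpha-4\pi}$ maintained along the flow once $\alpha_1$ is chosen so that $c_{\alpha-4\pi}<\beta_\alpha$ (this is \eqref{6.17}, using $c_\alpha\to 0$ as $\alpha\to 0$). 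When $\lambda_\infty=0$, equation \eqref{1.01} forces $u_\infty\equiv 0$ and the Dirichlet quantization gives $\alpha=4\pi l$, impossible for $4\pi<\alpha<8\pi$. Your proposed comparison ``$\beta_\alpha\le\sup_{M_{\alpha-4\pi}}E+\text{bubble contribution}$'' would require a sharp expansion of the Moser--Trudinger energy of a concentrating bubble that is neither proved nor needed, and without the trapped lower bound $E>c_{\alpha-4\pi}$ (which your reversed min-max does not provide) there is nothing for the upper bound to contradict.
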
 

In \cite{Struwe84} the existence of a pair 
of solutions of \eqref{1.01} only was shown for almost every $4\pi< \alpha <\alpha_1$.

\section{Global existence}
Let $u(t)$ be a solution of \eqref{1.1}, \eqref{1.2} with either
the constraint \eqref{1.5} or \eqref{0.1}. In the latter case we also assume
that $E(u(t))$ remains bounded. 
For any $t \ge 0$ let $m(t) = ||u(t)||_{L^{\infty}}$. 
Writing equation \eqref{1.1} in the form
\begin{equation*}
   u_t - e^{-u^2} \Delta u = \lambda u
   \hbox{ in } [0,\infty[ \times \Omega
\end{equation*}
and observing that $\Delta u \le 0$ at any point where $u(t)$ 
achieves its maximum, we conclude that the supremum of the function 
$\tilde{u}(t) = e^{-\int_0^t\lambda(s)ds}u(t)$
is non-increasing in time. That is, for any $0 \le t_0 \le t < \infty$ we have
\begin{equation} \label{2.5}
   m(t) \le e^{\int_{t_0}^t\lambda(s)ds}m(t_0).
\end{equation}
Together with \eqref{2.4}, \eqref{1.4} this immediately gives the following 
result.

\begin{lemma}\label{lemma0.1}
Suppose that $E(u(t))$ is uniformly bounded. Then there exist 
constants $\lambda_1 > 0$, $C_1$ depending on $u_0$ such 
that for any $t \ge 0$ we have 
\begin{equation*}
   ||u(t)||_{L^{\infty}} \le e^{\int_0^t\lambda(s)ds} ||u_0||_{L^{\infty}}
   \le C_1e^{\lambda_1 t} ||u_0||_{L^{\infty}}.
\end{equation*}
\end{lemma}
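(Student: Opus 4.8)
The plan is to combine the pointwise estimate \eqref{2.5} with the integral bounds on $\lambda$ already derived in Section~1. First I would treat the fixed-volume case \eqref{1.5}. There the bound \eqref{2.4} gives $0<\lambda(t)\le\lambda_0$ for all $t\ge0$, where $\lambda_0=2\Lambda_0/c_0$ depends only on $u_0$. Plugging this into \eqref{2.5} with $t_0=0$ yields immediately
\[
  m(t)\le e^{\int_0^t\lambda(s)\,ds}m(0)\le e^{\lambda_0 t}\,\|u_0\|_{L^\infty},
\]
so the claim holds with $\lambda_1=\lambda_0$ and $C_1=1$ in this case.

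Next I would treat the constant-Dirichlet-integral case \eqref{0.1}, which is the one where the hypothesis $\sup_t E(u(t))=:E_\infty<\infty$ is actually used. Here $\lambda$ need not be bounded pointwise, so instead I would integrate \eqref{1.4}: since $c_0=E(u_0)\le E(u(t))\le E_\infty$, the inequality $\frac{c_0}{2}\int_0^t\lambda\,ds\le\Lambda_0 t+E(u(t))-E(u_0)$ gives
\[
  \int_0^t\lambda(s)\,ds\le\frac{2}{c_0}\bigl(\Lambda_0 t+E_\infty-c_0\bigr)
  =\frac{2\Lambda_0}{c_0}\,t+\frac{2(E_\infty-c_0)}{c_0}.
\]
Feeding this into \eqref{2.5} with $t_0=0$ gives
\[
  m(t)\le e^{\int_0^t\lambda\,ds}\|u_0\|_{L^\infty}
  \le e^{2(E_\infty-c_0)/c_0}\,e^{(2\Lambda_0/c_0)t}\,\|u_0\|_{L^\infty},
\]
so the claim holds with $\lambda_1=2\Lambda_0/c_0$ and $C_1=e^{2(E_\infty-c_0)/c_0}$, both depending only on $u_0$ and the energy bound. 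Taking the larger of the two pairs of constants covers both cases simultaneously, and the first inequality in the lemma is just \eqref{2.5} with $t_0=0$.

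There is essentially no obstacle here: the lemma is a bookkeeping consequence of \eqref{2.5}, \eqref{2.4}, and \eqref{1.4}, exactly as the sentence preceding the statement announces. The only point requiring a word of care is that in the constant-Dirichlet-integral case one must not expect a pointwise bound on $\lambda$; the exponential growth rate $\lambda_1$ comes from the \emph{time-averaged} bound on $\lambda$ coming from \eqref{1.4}, and it is precisely the uniform energy bound that converts that averaged bound into an estimate with constants independent of $t$.
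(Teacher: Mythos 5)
Your proposal is correct and follows exactly the route the paper intends: the lemma is stated as an immediate consequence of \eqref{2.5} combined with the pointwise bound \eqref{2.4} in the fixed-volume case and the time-averaged bound \eqref{1.4} (together with the assumed energy bound) in the fixed-Dirichlet-integral case, which is precisely what you carry out, with explicit constants $\lambda_1$ and $C_1$.
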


Existence of a unique smooth solution on any finite time interval now follows 
from standard results on uniformly parabolic equations.  

\section{Asymptotic behavior}
\subsection{Weak subconvergence}
First consider the constraint \eqref{1.5}.
Integrating in time, from \eqref{1.6} we then obtain
\begin{equation} \label{3.1}
   \int_0^{\infty} \int_{\Omega} u_t^2 e^{u^2} dx \, dt 
   \le \frac12 \int_{\Omega}  |\nabla u_0|^2 \, dx.
\end{equation}
Hence we can find a sequence $t_k \rightarrow \infty$ such that
\begin{equation} \label{3.1a}
    \int_{\{t_k\} \times \Omega} u_t^2 e^{u^2} dx \rightarrow 0 
    \hbox{ as } k \rightarrow \infty.
\end{equation}

In view of \eqref{2.4} and \eqref{2.2} from any such 
sequence $(t_k)$ we may extract a subsequence such that
$\lambda_{\infty} = \lim_{k \rightarrow \infty}\lambda(t_k)$ exists and
such that, in addition, $u_k = u(t_k) \rightharpoondown u_{\infty}$ 
weakly in $H^1_0(\Omega)$ and pointwise almost everywhere as 
$k \rightarrow \infty$. From \eqref{2.2} by means of
the Vitali convergence theorem 
we then deduce that for a further subsequence
the terms $\lambda u e^{u^2}$, evaluated
at $t = t_k$, converge to $\lambda_{\infty} u_{\infty} e^{u^2_{\infty}}$
in $L^1(\Omega)$. Thus,
upon passing to the limit $k \rightarrow \infty$ in \eqref{1.1} we see that
$u_{\infty}$ is a (weak) solution to equation \eqref{1.01}. 
But since $u_{\infty} \in H^1_0(\Omega)$, from the Moser-Trudinger 
inequality it follows that $u_{\infty} e^{u^2_{\infty}} \in L^p(\Omega)$
for any $p < \infty$, and $u_{\infty}$ is, in fact, smooth. 

Similarly, in the case of the the constraint \eqref{0.1}, assuming that $E(u(t))$ is 
uniformly bounded from above along the flow \eqref{1.1}, \eqref{1.2},
from \eqref{0.3} we obtain the bound 
\begin{equation} \label{0.3a}
    \int_0^{\infty}\bigg(\lambda^{-1}\int_{\Omega} u_t^2 e^{u^2}\, dx\bigg)\, dt \le 
    \lim_{t \rightarrow \infty}E(u(t)) - E(u_0) < \infty,
\end{equation}
and we can find a sequence $t_k \rightarrow \infty$ such that
\begin{equation} \label{0.4}
   \lambda(t_k)^{-1} \int_{\{t_k\} \times \Omega} u_t^2 e^{u^2} dx \rightarrow 0 
   \hbox{ as } k \rightarrow \infty.
\end{equation} 

Necessarily the sequence $(\lambda(t_k))$
is bounded. 
Indeed, upon multiplying \eqref{1.1} by $u$
we infer that at time $t_k$ with error $o(1) \rightarrow 0$ we have
\begin{equation*}
   \begin{split}
     \lambda\int_{\Omega} & u^2 e^{u^2}dx = \int_{\Omega} |\nabla u|^2 dx 
     +\int_{\Omega} u u_t e^{u^2} dx
    \end{split}
\end{equation*}
But by \eqref{0.4} and H\"older's inequality, at time $t = t_k$ with 
error $o(1) \rightarrow 0$ as $k \rightarrow \infty$ we can estimate
\begin{equation}\label{0.7}
    \big|\int_{\Omega} u u_t e^{u^2} dx\big|^2 \le \lambda \int_{\Omega} u^2 e^{u^2}dx
    \cdot \lambda^{-1}\int_{\Omega} u_t^2 e^{u^2}\, dx 
    = o(1) \lambda \int_{\Omega} u^2 e^{u^2}dx
\end{equation}
and we have
\begin{equation} \label{0.5}
   (1+o(1))\lambda \int_{\Omega} u^2 e^{u^2} dx = \int_{\Omega}|\nabla u|^2 \, dx 
   = \Lambda_0. 
\end{equation}
Our claim now follows from \eqref{2.3}.
Note that, in particular, the approximate identity \eqref{2.2} 
thus also holds in the case of the constraint \eqref{0.1}. 

\subsection{The case when $u$ is bounded}
If in addition we assume that the function $u$ is uniformly bounded we 
find that any sequence $(u_k)$ as above is bounded in $H^2(\Omega)$
and hence possesses a subsequence such that 
$u_k \rightarrow u_{\infty}$ strongly in $H^1_0(\Omega)$ 
as $k \rightarrow \infty$. Hence $u_{\infty}\in H^1_0(\Omega)$ 
satisfies, respectively, \eqref{1.5} or \eqref{0.1}, 
and $u_{\infty} > 0$ by the maximum principle.

In the case of the constraint \eqref{1.5}, and provided that $u$ is bounded,
we can even show relative compactness of the sequence $u_k = u(t_k)$
for {\it any} sequence $t_k \rightarrow \infty$ .

\begin{proposition}\label{prop3.1}
Let $u$ solve \eqref{1.1} - \eqref{1.5}. Suppose that 
there exists a uniform constant $M > 0$ such that 
$u(t,x) \le M$ for all $x \in \Omega$ and all $t\ge 0$.
Then any sequence $u_k = u(t_k)$ with $t_k \rightarrow \infty$
has a strongly convergent subsequence.
\end{proposition}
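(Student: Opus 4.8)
The plan is to exploit the fact that, under the constraint \eqref{1.5}, the Dirichlet integral decreases along the flow. Indeed, \eqref{1.6} gives $\frac{d}{dt}\int_\Omega|\nabla u|^2\,dx=-2\int_\Omega u_t^2e^{u^2}\,dx\le 0$, so that the limit $D_\infty:=\lim_{t\to\infty}\int_\Omega|\nabla u(t)|^2\,dx$ exists; moreover, \eqref{3.1} yields $\int_0^\infty\int_\Omega u_t^2e^{u^2}\,dx\,dt\le\tfrac12\Lambda_0<\infty$, and since $e^{u^2}\ge 1$ the function $t\mapsto\|u_t(t)\|_{L^2(\Omega)}^2$ is integrable on $[0,\infty[$. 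The genuine difficulty is that for a \emph{given} sequence $t_k\to\infty$ we have no bound for $\int_\Omega u_t^2e^{u^2}\,dx$ at the times $t_k$ themselves; I would circumvent this by comparing $u(t_k)$ with suitable nearby times.

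\textbf{Step 1 (good nearby times).} By the mean value theorem, for each $k$ there is $s_k\in[t_k,t_k+1]$ with $\int_\Omega u_t^2(s_k)e^{u^2(s_k)}\,dx\le\int_{t_k}^{t_k+1}\int_\Omega u_t^2e^{u^2}\,dx\,dt\to 0$ as $k\to\infty$. Using $\|u(s_k)\|_{L^\infty}\le M$ and $0<\lambda\le\lambda_0$ from \eqref{2.4}, rewriting \eqref{1.1} as $\Delta u=(u_t-\lambda u)e^{u^2}$, and bounding $\|u_t(s_k)\|_{L^2}^2\le\int_\Omega u_t^2(s_k)e^{u^2(s_k)}\,dx$, we see that $\Delta u(s_k)$ is bounded in $L^2(\Omega)$, so $u(s_k)$ is bounded in $H^2(\Omega)\cap H^1_0(\Omega)$. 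Passing to a subsequence, $u(s_k)\to u_\infty$ strongly in $H^1_0(\Omega)$ by Rellich's theorem (with $u_\infty$ a nonnegative solution of \eqref{1.01}, obtained by passing to the limit in \eqref{1.1} as in Section~3.1); in particular $\|\nabla u_\infty\|_{L^2}^2=\lim_k\|\nabla u(s_k)\|_{L^2}^2=D_\infty$, since $s_k\to\infty$.

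\textbf{Step 2 (transfer to $t_k$).} I claim $u(t_k)-u(s_k)\rightharpoondown 0$ weakly in $H^1_0(\Omega)$ along this subsequence. For $\varphi\in H^2(\Omega)\cap H^1_0(\Omega)$ one has $\frac{d}{dt}\int_\Omega\nabla u\cdot\nabla\varphi\,dx=\int_\Omega\nabla u_t\cdot\nabla\varphi\,dx=-\int_\Omega u_t\,\Delta\varphi\,dx$, the boundary term vanishing because $u_t=0$ on $\partial\Omega$; hence
\[
\Big|\int_\Omega\nabla\big(u(t_k)-u(s_k)\big)\cdot\nabla\varphi\,dx\Big|\le\|\Delta\varphi\|_{L^2}\int_{t_k}^{t_k+1}\|u_t\|_{L^2}\,dt\le\|\Delta\varphi\|_{L^2}\Big(\int_{t_k}^{t_k+1}\int_\Omega u_t^2e^{u^2}\,dx\,dt\Big)^{1/2}\to 0 .
\]
Since $\|\nabla u(t_k)\|_{L^2}$ and $\|\nabla u(s_k)\|_{L^2}$ are bounded (by monotonicity of the Dirichlet integral, by $\Lambda_0$) and $H^2\cap H^1_0$ is dense in $H^1_0$, the claim follows, and therefore $u(t_k)\rightharpoondown u_\infty$ weakly in $H^1_0(\Omega)$.

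Finally, combining this weak convergence with the norm convergence $\|\nabla u(t_k)\|_{L^2}^2\to D_\infty=\|\nabla u_\infty\|_{L^2}^2$ (again from the monotonicity of the Dirichlet integral together with Step~1) gives $u(t_k)\to u_\infty$ strongly in $H^1_0(\Omega)$, as claimed. The only real obstacle is the lack of pointwise-in-time dissipation control along an arbitrary sequence; the rest is the routine ``good time plus transfer'' argument, which here hinges on the Dirichlet integral admitting a limit at infinity — a feature special to the fixed-volume constraint \eqref{1.5}, which is why the statement is restricted to the flow \eqref{1.1}--\eqref{1.5}.
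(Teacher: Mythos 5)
Your argument is correct, but it is genuinely different from the one in the paper. The paper's proof upgrades the decay \eqref{3.1a} to a decay that is \emph{uniform in time}: using \eqref{1.1} it computes $\frac{d}{dt}\int_\Omega u_t^2e^{u^2}dx$, and with the bound $u\le M$, Sobolev's embedding and an absorption argument (choosing $\varepsilon_0=\frac{1}{16C_1^2}$ in \eqref{1.15}--\eqref{1.19}) it shows $\int_\Omega u_t^2e^{u^2}dx\to 0$ as $t\to\infty$, whence $\limsup_{t\to\infty}\|u(t)\|_{H^2}<\infty$ and compactness for \emph{every} large time. You instead sidestep the differential inequality entirely: you use the monotonicity of the Dirichlet integral coming from \eqref{1.6} (so that $\|\nabla u(t)\|_{L^2}^2\to D_\infty$), pick good times $s_k\in[t_k,t_k+1]$ where the dissipation is small, get an $H^2$ bound and strong $H^1_0$ convergence there, and then transfer to $t_k$ by testing $\frac{d}{dt}\int\nabla u\cdot\nabla\varphi\,dx=-\int u_t\Delta\varphi\,dx$ against $\varphi\in H^2\cap H^1_0$ and invoking weak convergence plus convergence of norms. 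All the individual steps check out ($e^{u^2}\ge 1$ gives the $L^2$ control of $u_t$, \eqref{2.4} bounds $\lambda$, the boundary term vanishes since $u_t=0$ on $\partial\Omega$, and density of $H^2\cap H^1_0$ in $H^1_0$ suffices for the weak limit). What each approach buys: the paper's argument is more work but gives the stronger conclusions $\int_\Omega u_t^2e^{u^2}dx\to 0$ and uniform $H^2$ bounds as $t\to\infty$; your argument is more elementary and delivers exactly the subsequential compactness claimed in the proposition (together with the nice byproduct that every subsequential limit has Dirichlet energy $D_\infty$ and solves \eqref{1.01}), but it does not yield the uniform-in-time decay of the dissipation. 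You are also right that the mechanism hinges on the monotonicity of the Dirichlet integral, which is specific to the constraint \eqref{1.5}.
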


\begin{proof}
It suffices to show that under the assumptions of the Proposition the 
convergence in \eqref{3.1a} can be improved to be uniform in time. 
To show this we use \eqref{1.1} to calculate
\begin{equation*}
\begin{split}
 u_{tt}&=\lambda_t u+\lambda u_t-2uu_te^{-u^2}\Delta u+e^{-u^2}\Delta u_t\\
 & =\lambda_t u+\lambda u_t+e^{-u^2}\Delta u_t-2u u_t^2+2\lambda u^2 u_t.
\end{split}
\end{equation*}
Thus we obtain 
\begin{equation*}
\begin{split}
 \frac{1}{2} \frac{d}{dt}\bigg(\int_\Omega & u_t^2 e^{u^2}dx \bigg) 
 = \int_\Omega u_t u_{tt}e^{u^2}dx+\int_\Omega u_t^3 u e^{u^2}dx \\
 & = \lambda_t \int_\Omega u u_t e^{u^2}dx 
   + \lambda \int_\Omega u_t^2 e^{u^2}dx +\int_\Omega u_t \Delta u_t\, dx\\
 & \quad\quad - 2\int_\Omega u u_t^3e^{u^2}dx
   + 2\lambda \int_\Omega u^2u_t^2 e^{u^2}dx \\
\end{split}
\end{equation*}
By \eqref{1.3} the first term on the right vanishes. Moreover, we may use the 
fact $u_t=0$ on $\partial \Omega$ to integrate by parts in the third term.
Also using H\"olders inequality and Sobolev's embedding 
$W^{1,2}\hookrightarrow L^4$ then with constants $C=C(M)$ we find
\begin{equation} \label{1.15}
\begin{split}
 \int_\Omega & |\nabla u_t|^2 dx 
 + \frac{1}{2} \frac{d}{dt}\bigg(\int_\Omega u_t^2 e^{u^2}dx \bigg) \\
 & \le C \int_\Omega u_t^2 e^{u^2}dx
 + C \bigg(\int_\Omega u_t^2 e^{u^2}dx\bigg)^{\frac{1}{2}}
     \bigg(\int_\Omega u_t^4dx\bigg)^{\frac{1}{2}}\\
 & \le C \int_\Omega u_t^2 e^{u^2}dx
 + C_1 \bigg(\int_\Omega u_t^2 e^{u^2}dx\bigg)^{\frac{1}{2}}
         \int_\Omega \big(|\nabla u_t|^2+u_t^2e^{u^2}\big)dx.
\end{split}
\end{equation}
To proceed, we use an argument similar to \cite{Struwe02}, p. 271. 
Given any number $\varepsilon_0>0$, by \eqref{3.1} there exist 
arbitrary large times $t_0$ such that 
\begin{equation} \label{1.16}
\int_{\{t_0\}\times\Omega} u_t^2 e^{u^2}dx <\varepsilon_0.
\end{equation}
For any such $t_0$ we may choose a maximal $t_0\le t_1\le \infty$ such that 
\begin{equation} \label{1.17}
 \sup_{t_0\le t\le t_1} \int_{\{t\}\times\Omega} u_t^2 e^{u^2}dx
 \le 2\varepsilon_0.
\end{equation}
If we now fix $\varepsilon_0=\frac{1}{16C_1^2}$, from \eqref{1.15} at any
time $t\in [t_0,t_1]$ we obtain
\begin{equation} \label{1.18}
 \frac{1}{2}\frac{d}{dt}\bigg(\int_\Omega u_t^2 e^{u^2}dx\bigg)
 \le C\int_\Omega u_t^2 e^{u^2}dx.
\end{equation}
Integrating from $t_0$ to $t$ and using \eqref{3.1}, for any $t\in [t_0,t_1]$ 
we get
\begin{equation} \label{1.19}
\begin{split}
  \int_{\{t\}\times\Omega} u_t^2 e^{u^2}dx
  & \le \int_{\{t_0\}\times\Omega}u_t^2 e^{u^2}dx
  + C \int_{t_0}^\infty \int_\Omega u_t^2 e^{u^2}dx < 2\varepsilon_0,
\end{split}
\end{equation}
if $t_0$ is large enough. For such $t_0$ then $t_1=\infty$, and we conclude 
\begin{equation} \label{1.21}
\begin{split}
  \limsup_{t\rightarrow \infty} & \int_{\{t\}\times\Omega}u_t^2 e^{u^2}dx\\ 
  & \le \liminf_{t_0\rightarrow \infty}
        \bigg(\int_{\{t_0\}\times\Omega} u_t^2 e^{u^2}dx
    + C \int_{t_0}^\infty \int_\Omega u_t^2 e^{u^2}dx\bigg)=0.
\end{split}
\end{equation}
Using again the assumption that $u$ is uniformly bounded this directly implies that
\begin{equation}\label{1.22}
     \limsup_{t\rightarrow \infty}||u(t)||_{H^2} < \infty
\end{equation}
and hence the claim.
\end{proof}

\section{Blow-up analysis}
It remains to analyze the blow-up behavior of a solution $u$ to 
\eqref{1.1}, \eqref{1.2} satisfying either \eqref{1.5} or \eqref{0.1} 
in the case when $u$ is unbounded. As we shall see,
this can be done in complete analogy with the corresponding time-independent
problem. The key is the following lemma, which refines our above 
choice of $(t_k)$. 
 
\begin{lemma}\label{lemma4.1}
Suppose that $\limsup_{t\rightarrow \infty} ||u(t)||_{L^{\infty}} = \infty$
and that $E(u(t)) \le E_{\infty}$ for some constant $E_{\infty} < \infty$.
Then there is a sequence $t_{k} \rightarrow \infty$ with associated numbers 
$\lambda_k =\lambda(t_k)\rightarrow \lambda_{\infty}\ge 0$
such that $u(t_{k}) \rightharpoondown u_\infty$ weakly in $H^1_0(\Omega)$ 
as $k \rightarrow \infty$ and
\begin{equation*}
   ||u(t_k)||_{L^{\infty}} \rightarrow \infty,\ 
   \lambda_k^{-1} \int_{\{t_k\}\times \Omega}|u_t|^2 e^{u^2}dx\, dt 
   \rightarrow 0\; .
\end{equation*}
\end{lemma}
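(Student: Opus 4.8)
\medskip
\noindent\emph{Proof plan.}
I would obtain $(t_k)$ by a measure-theoretic selection and then pass to a subsequence. It suffices to find $t_k\to\infty$ with $m(t_k):=\|u(t_k)\|_{L^\infty}\to\infty$ and, in addition, $\lambda_k^{-1}\int_\Omega u_t^2e^{u^2}\,dx\to 0$ at $t=t_k$: the weak subconvergence $u(t_k)\rightharpoondown u_\infty$ in $H^1_0(\Omega)$ and the convergence $\lambda_k\to\lambda_\infty\ge 0$ then follow, after extracting a subsequence, exactly as in Section~3, using the uniform $H^1_0$-bound \eqref{2.2} resp. \eqref{0.1} and the bound on $\lambda_k$ from \eqref{2.4} resp. \eqref{0.5}, \eqref{2.3}.

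The selection rests on two ingredients. First, a dissipation inequality with finite time integral: by \eqref{3.1}, $\int_0^\infty\big(\int_\Omega u_t^2e^{u^2}\,dx\big)\,dt<\infty$ under the constraint \eqref{1.5}, and by \eqref{0.3a}, $\int_0^\infty\big(\lambda^{-1}\int_\Omega u_t^2e^{u^2}\,dx\big)\,dt<\infty$ under \eqref{0.1}; write $J(t)$ for the respective integrand. Second, a uniform lower bound on the measure of the set of blow-up times: from \eqref{2.5} one has $m(t)\ge e^{-\int_t^s\lambda\,d\sigma}\,m(s)$ for $t\le s$, and since the exponents $\int_t^s\lambda\,d\sigma$ over time intervals of a fixed small length $\tau_0$ are controlled (by \eqref{2.4} under \eqref{1.5}, and similarly using \eqref{1.4} under \eqref{0.1}), we may fix $\tau_0>0$ so that $m>M$ on all of $[s-\tau_0,s]$ whenever $m(s)>2M$. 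Since $\limsup_{t\to\infty}m(t)=\infty$, it follows that for every $M>0$ and every $T\ge 0$ the set $\{t\ge T:\ m(t)>M\}$ contains an interval of length $\tau_0$, hence has Lebesgue measure at least $\tau_0$.

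These combine as follows. Given $\varepsilon>0$, choose $T$ so large that $\int_T^\infty J\,dt<\varepsilon\tau_0$; then $|\{t\ge T:\ J(t)\ge\varepsilon\}|<\tau_0$, so $\{t\ge T:\ m(t)>M,\ J(t)<\varepsilon\}\ne\emptyset$ for every $M$. Running this with $\varepsilon=\varepsilon_k\downarrow 0$, $M=M_k\uparrow\infty$, $T=T_k\uparrow\infty$ and picking $t_k$ in the corresponding set gives $t_k\to\infty$ with $m(t_k)\to\infty$ and $J(t_k)\to 0$. Under \eqref{0.1} this is precisely the assertion of the lemma (together with the subsequence extraction above); under \eqref{1.5} it yields $m(t_k)\to\infty$ and $\int_\Omega u_t^2e^{u^2}\,dx\to 0$ at $t=t_k$.

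The last step, and the one I expect to be the main obstacle, is to upgrade this to $\lambda_k^{-1}\int_\Omega u_t^2e^{u^2}\,dx\to 0$ at $t=t_k$ under the constraint \eqref{1.5}. This is not automatic because $\lambda(t)$ may degenerate to $0$ along a concentrating sequence (for instance when $u_\infty\equiv 0$ and all the energy goes into bubbles): under \eqref{1.5} one has $\lambda\int_\Omega u^2e^{u^2}\,dx=\int_\Omega|\nabla u|^2\,dx$, which by \eqref{2.2} and \eqref{2.3} stays in a fixed compact subinterval of $(0,\infty)$, so $\lambda(t)^{-1}$ is comparable to $\int_\Omega u^2e^{u^2}\,dx$, and the latter is a priori only bounded by $m(t)^2\int_\Omega e^{u^2}\,dx$ and may blow up. I would remove this by upgrading the dissipation estimate to $\int_0^\infty\big(\lambda^{-1}\int_\Omega u_t^2e^{u^2}\,dx\big)\,dt<\infty$ under \eqref{1.5} as well --- equivalently, by controlling $\int_0^\infty\big(\int_\Omega u^2e^{u^2}\,dx\big)\big(\int_\Omega u_t^2e^{u^2}\,dx\big)\,dt$ --- testing \eqref{1.1} against a weighted multiplier of the form $\varphi(u)\,u_t$ and using $\frac{d}{dt}E(u)=0$, and then rerunning the above selection with $J(t)=\lambda(t)^{-1}\int_\Omega u_t^2e^{u^2}\,dx$; alternatively, one may try a finer choice of $t_k$ inside the concentration region where $\int_\Omega u^2e^{u^2}\,dx$ is comparable to its values at nearby times, so as to trade \eqref{3.1} against the growth of $\lambda^{-1}$.
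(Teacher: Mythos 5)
Your selection argument is clean and, under the Dirichlet-norm constraint \eqref{0.1}, it does prove the lemma: there the natural dissipation quantity is already $J(t)=\lambda^{-1}\int_\Omega u_t^2e^{u^2}dx$, its time integral is finite by \eqref{0.3a}, and your measure-theoretic choice of $t_k$ (using \eqref{2.5} together with the bound on $\int\lambda\,dt$ over unit-length intervals from \eqref{1.4} -- note that there the exponent is bounded but not small, so ``$m(s)>2M$'' should read ``$m(s)>e^K M$'' for the appropriate constant $K$) gives exactly the required times. However, for the volume constraint \eqref{1.5} your proof has a genuine gap, and it is precisely the one you flag: what you obtain is $\int_\Omega u_t^2e^{u^2}dx\to 0$ at $t=t_k$, whereas the lemma needs $\lambda_k^{-1}\int_\Omega u_t^2e^{u^2}dx\to 0$, and $\lambda(t_k)$ may tend to $0$ (indeed $\lambda_\infty=0$ is allowed, and for concentrating profiles $\int_\Omega u^2e^{u^2}dx$, hence $\lambda^{-1}$, can be arbitrarily large while $E$ and $\|\nabla u\|_{L^2}$ stay bounded). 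Your proposed repairs are not carried out, and the first one -- upgrading to $\int_0^\infty\big(\lambda^{-1}\int_\Omega u_t^2e^{u^2}dx\big)dt<\infty$ under \eqref{1.5} -- is doubtful for exactly the same reason: by \eqref{1.6} this integral equals $\int_0^\infty\big(\tfrac12\int_\Omega u^2e^{u^2}dx\big)\big(-\tfrac{d}{dt}\log\|\nabla u\|_{L^2}^2\big)dt$ up to constants, and the weight $\int_\Omega u^2e^{u^2}dx$ is not a priori controlled.

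The paper avoids this obstacle by arguing by contradiction rather than by direct selection. If the conclusion fails, then there are $t_0$ and $C_0$ such that for all $t\ge t_0$ either $m(t)\le C_0$ or $\lambda(t)\le C_0\int_\Omega u_t^2e^{u^2}dx$ (this is \eqref{4.3a}). The point is that on any maximal interval $]t_1,t_2[$ on which $m>C_0$, the second alternative holds throughout, so under \eqref{1.5} one integrates it in time and uses \eqref{3.1} to get $\int_{t_1}^{t_2}\lambda\,dt\le C_0\Lambda_0/2$; then the maximum-principle estimate \eqref{2.5}, applied from the left endpoint where $m(t_1)=C_0$ (or $t_1=t_0$), yields $m\le C_0e^{C_1}$ on the whole interval, hence a uniform bound on $m$, contradicting $\limsup_{t\to\infty}m(t)=\infty$. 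In other words, the contradiction hypothesis converts the known bound \eqref{3.1} into a bound on $\int\lambda\,dt$ over the blow-up intervals, so no time-integrability of $\lambda^{-1}\int_\Omega u_t^2e^{u^2}dx$ is ever needed under \eqref{1.5}; under \eqref{0.1} the same scheme works with \eqref{0.3} bounding the length of such intervals and \eqref{1.4} bounding $\int\lambda\,dt$ there. If you want to keep your direct-selection framework, you would need to incorporate this idea, e.g.\ select $t_k$ where $m(t_k)>M_k$ and $\lambda(t_k)>C_k\int_\Omega u_t^2e^{u^2}dx$ with $C_k\to\infty$, and show such times exist by exactly the paper's interval argument; as written, the \eqref{1.5} case of your proof is incomplete.
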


\begin{proof}
Suppose by contradiction that there exist $t_0 \ge 0$ and a 
constant $C_0 > 0$ such that for all $t \ge t_0$ either there holds
\begin{equation*}
     m(t)=||u(t)||_{L^{\infty}}\le C_0, 
\end{equation*}
or 
\begin{equation} \label{4.3a} 
    \lambda(t) \le C_0 \int_{\{t\}\times \Omega} |u_t|^2 e^{u^2}dx\; .   
\end{equation}

Consider first the constraint \eqref{1.5}. 
If $m(t) > C_0$ for all $t \ge t_0$, then \eqref{4.3a} holds for all such $t$
and upon integrating in time from \eqref{1.6} for any $t \ge t_0$ we obtain
\begin{equation}\label{4.3c}
  \begin{split}
    \int_{t_0}^{t} \lambda(s)ds 
    & \le C_0 \int_{0}^{\infty}\int_{\Omega} |u_t|^2 e^{u^2}dx\, dt 
    \le \frac{C_0\Lambda_0}{2} =:C_1 < \infty.
  \end{split}
\end{equation}
Applying \eqref{2.5} to the shifted flow $u(t-t_0)$ we find 
$\sup_{t \ge t_0} m(t) \le m(t_0) e^{C_1} < \infty$,
contrary to assumption. 

If for some $t_0 \le t_1 < t_2 \le \infty$ and all $t_1 < t < t_2$
we have $m(t_1)=C_0< m(t)$, then \eqref{4.3a} holds for all such $t$ and 
we obtain \eqref{4.3c} with $t_1$ replacing $t_0$ for all $t \in [t_1,t_2]$. 
Applying \eqref{2.5} to the shifted flow $u(t-t_1)$, for any such
$t_0 \le t_1 < t_2 \le \infty$ we obtain the bound
$\sup_{t_1 < t \le t_2} m(t) \le C_0 e^{C_1} < \infty$, 
again contradicting our hypotheses. 

In case of the constraint \eqref{0.1}, whenever for some 
$t_0 \le t_1 < t_2 \le \infty$ and all $t_1 < t < t_2$ there holds
$m(t)>C_0$ from \eqref{4.3a} and \eqref{0.3} we obtain
\begin{equation}\label{4.3d}
  \begin{split}
    t_2 - t_1 & \le C_0 \int_{0}^{\infty} 
    \bigg(\lambda(t)^{-1}\int_{\Omega} |u_t|^2 e^{u^2}dx\bigg) dt 
    \le C_0 E_{\infty} =: T_0 < \infty.
  \end{split}
\end{equation}
By \eqref{4.3d} the length of any interval $I=]t_1, t_2[$ with $m(t)>C_0$ for $t\in I$ 
is uniformly bounded. Since $\limsup_{t\rightarrow \infty} m(t) = \infty$, 
we may then assume that $m(t_1)=C_0$. 
Applying \eqref{2.5} to the shifted flow $u(t-t_1)$, by \eqref{1.4} for any such 
interval we find $\sup_{t_1 < t \le t_2} m(t) \le C_0 e^{C_2}$, where 
$C_2=2c_0^{-1}(\Lambda_0 T_0 + E_{\infty}) < \infty$. Thus we also have
$\limsup_{t \rightarrow \infty} m(t) \le C_0 e^{C_2}$, contrary to hypothesis.
\end{proof}

For a sequence $(t_k)$ as determined in Lemma \ref{lemma4.1} above we  
let $u_k = u(t_k)$, $k \in {\mathbb N}$ and set $\dot{u}_k = u_t(t_k)$.
The symbols $t$, $t_k$ then no longer explicitly appear and we
may use these letters for other purposes.
Also let 
$\eta = \log \left(\frac{2}{1+|x|^2}\right)$ be the standard solution of Liouville's 
equation
\begin{equation} \label{4.4}
   -\Delta \eta= e^{2\eta} \hbox{ on } {\mathbb R}^2
\end{equation}
induced by stereographic projection from $S^2$, with 
\begin{equation}\label{4.4a}
   \int_{{\mathbb R}^2}e^{2\eta}dx= 4 \pi =: \Lambda_1.
\end{equation}
Similar to \cite{Adimurthi-Struwe}, \cite{Druet} the following result now holds.

\begin{theorem}\label{thm4.1}
There exist a number $i_* \in {\mathbb N}$ and 
points $x^{(i)} \in \overline{\Omega}$, $1 \le i \le i_*$, such that  
as $k \rightarrow \infty$ suitably for each $i$ 
with suitable points $x_k= x^{(i)}_k \rightarrow x^{(i)}$ and scale factors 
$0 < r_k = r^{(i)}_k \rightarrow 0$ satisfying  
\begin{equation}\label{4.2}
  \lambda_k r_k^2 u_k^2(x_k) e^{u_k^2(x_k)} = 4
\end{equation}
we have 
\begin{equation}\label{4.3}
  \eta_k(x) = \eta^{(i)}_k(x) := 
  u_k(x_k)(u_k(x_k + r_k x ) - u_k(x_k)) 
  \rightarrow \eta_0 = \log \bigg(\frac{1}{1+|x|^2}\bigg)
\end{equation}
locally uniformly on ${\mathbb R}^2$, where 
$\eta_0 = \eta - \log 2$ satisfies
\begin{equation} \label{4.4b}
   -\Delta \eta_0= 4e^{2\eta_0} \hbox{ on } {\mathbb R}^2,
\end{equation}
and there holds
\begin{equation}\label{4.4c}
  \lim_{L \rightarrow\infty}\lim_{k \rightarrow\infty}
  \lambda_k \int_{B_{Lr_k}(x_k)} u_k^2 e^{u_k^2}dx 
  = 4\int_{{\mathbb R}^2}e^{2\eta_0}dx= \Lambda_1.
\end{equation}

Equality $x^{(i)}= x^{(j)}$ may occur, but we have
\begin{equation}\label{4.5}
    \frac{dist(x^{(i)}_k,\partial \Omega)}{r^{(i)}_k},\ 
    \frac{|x^{(i)}_k - x^{(j)}_k|}{ r^{(i)}_k}
    \to \infty \hbox{ for all } 1 \le i \neq j \le i_*,
\end{equation}
and there holds the uniform pointwise estimate
\begin{equation}\label{4.6}
     \lambda_k \inf_i |x-x^{(i)}_k|^2 u_k^2(x)e^{u_k^2(x)}\leq C,
\end{equation}
for all $x \in \Omega$ and all $k \in {\mathbb N}$.

Finally, $u_k\rightarrow u_{\infty}$ in 
$H^2_{loc}(\Omega \setminus \{x_1,\dots,x_{i_*}\})$ as $k \rightarrow\infty$.
\end{theorem}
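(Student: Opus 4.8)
The plan is to carry out, for the sequence $u_k=u(t_k)$ selected in Lemma~\ref{lemma4.1}, the blow-up analysis of Adimurthi--Struwe \cite{Adimurthi-Struwe} and Druet \cite{Druet} for the elliptic equation \eqref{1.01}, viewing $u_k$ as a solution of \eqref{1.01} perturbed by the parabolic error term; that is, $-\Delta u_k=\lambda_k u_k e^{u_k^2}-\dot u_k e^{u_k^2}$ in $\Omega$ (this is \eqref{1.1} at $t=t_k$). Everything rests on two a priori facts: by \eqref{2.2} — which, by \eqref{0.5}, also holds up to an $o(1)$ error under the constraint \eqref{0.1} — the measures $\lambda_k u_k^2 e^{u_k^2}\,dx$ have uniformly bounded mass $\le\Lambda_0$; and by the choice in Lemma~\ref{lemma4.1}, $\int_\Omega|\dot u_k|^2 e^{u_k^2}\,dx=\lambda_k\cdot o(1)$. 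The first step would be to record that, combined with \eqref{4.2} and the monotonicity of $s\mapsto s^2e^{s^2}$ (which gives $\int_{B_{Lr_k}(x_k)}e^{u_k^2}\le e^{u_k^2(x_k)}|B_{Lr_k}|$), the second fact makes the error harmless after any rescaling: by Cauchy--Schwarz, $u_k(x_k)\int_{B_{Lr_k}(x_k)}|\dot u_k|e^{u_k^2}\,dx\le C L\,\sqrt{o(1)}\to0$, the factors $u_k(x_k)$ cancelling.

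Next I would construct the first blow-up point: choose $x_k=x_k^{(1)}$ with $u_k(x_k)=\max_\Omega u_k\to\infty$ and $r_k=r_k^{(1)}$ by \eqref{4.2}. Since $\lambda_k u_k^2 e^{u_k^2}$ cannot be bounded in $L^\infty$ — otherwise elliptic estimates for the perturbed equation would contradict $\|u_k\|_{L^\infty}\to\infty$ — the mass bound forces $r_k\to0$. With $\eta_k$ as in \eqref{4.3} and $u_k(x_k+r_kx)=u_k(x_k)+\eta_k(x)/u_k(x_k)$ one computes
\[
  -\Delta\eta_k=\frac{4u_k(x_k+r_kx)}{u_k(x_k)}\,e^{2\eta_k+\eta_k^2/u_k^2(x_k)}-u_k(x_k)r_k^2\,(\dot u_k e^{u_k^2})(x_k+r_kx),
\]
and the decisive observation is that $2\eta_k+\eta_k^2/u_k^2(x_k)=u_k^2(x_k+r_kx)-u_k^2(x_k)\le0$ with $u_k(x_k+r_kx)\le u_k(x_k)$, so the first term on the right lies in $[0,4]$. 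Using this bound, the $L^1_{loc}$-smallness of the error, and $\eta_k\le0$, the elliptic theory for equations with $L^1$-type right-hand side (in the spirit of \cite{Adimurthi-Struwe}, \cite{Druet}) yields local bounds on $\eta_k$, and a bootstrap upgrades the convergence of a subsequence to $C^1_{loc}(\mathbb R^2)$; the limit $\eta_0\le\eta_0(0)=0$ solves \eqref{4.4b} with finite mass $\le\Lambda_0$ (since $u_k(x_k)\to\infty$ and $\eta_k$ is locally bounded, the right-hand side converges to $4e^{2\eta_0}$), so the classification of entire solutions of Liouville's equation forces $\eta_0=\log\frac1{1+|x|^2}$, which is \eqref{4.3}. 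Changing variables in $\lambda_k\int_{B_{Lr_k}(x_k)}u_k^2 e^{u_k^2}\,dx$, using \eqref{4.2} and dominated convergence by the constant $4$, then gives \eqref{4.4c}.

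The further blow-up points come from the standard induction. Given $x_k^{(1)},\dots,x_k^{(i)}$ with scales $r_k^{(1)},\dots,r_k^{(i)}$, put $R_k^{(i)}(x)=\min_{1\le j\le i}|x-x_k^{(j)}|$. If $\sup_\Omega\lambda_k R_k^{(i)}(x)^2u_k^2(x)e^{u_k^2(x)}$ stays bounded, the induction stops — and this bound is precisely \eqref{4.6}; otherwise choose $x_k^{(i+1)}$ almost realizing the supremum, define $r_k^{(i+1)}$ by \eqref{4.2}, and repeat the previous step around $x_k^{(i+1)}$. Because $R_k^{(i)}(x_k^{(i+1)})^2\lambda_k u_k^2 e^{u_k^2}\to\infty$ while $\lambda_k(r_k^{(i+1)})^2u_k^2 e^{u_k^2}=4$ at $x_k^{(i+1)}$, one gets $R_k^{(i)}(x_k^{(i+1)})/r_k^{(i+1)}\to\infty$, and the usual half-space argument (the bubble $\eta_0$ does not meet a homogeneous boundary condition) excludes $dist(x_k^{(i+1)},\partial\Omega)/r_k^{(i+1)}$ remaining bounded; this is \eqref{4.5}. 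Since each bubble contributes mass $\Lambda_1=4\pi$ by \eqref{4.4c} and, by \eqref{4.5}, these contributions are asymptotically disjoint, the bound $\lambda_k\int_\Omega u_k^2 e^{u_k^2}\,dx\le\Lambda_0$ forces the process to terminate after $i_*\le\Lambda_0/(4\pi)$ steps; passing to a further subsequence, $x_k^{(i)}\to x^{(i)}\in\overline\Omega$.

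Finally, for any $\Omega'$ with $\overline{\Omega'}\subset\Omega\setminus\{x^{(1)},\dots,x^{(i_*)}\}$ we have $R_k^{(i_*)}\ge\delta>0$ on $\Omega'$ for large $k$, so \eqref{4.6} gives $\lambda_k u_k^2 e^{u_k^2}\le C$ there; splitting into $\{u_k\le1\}$ and $\{u_k\ge1\}$ and using $\lambda_k\le\lambda_0$ then shows $\lambda_k u_k e^{u_k^2}$ is bounded in $L^\infty(\Omega')$ and $e^{u_k^2}\le C/\lambda_k$ on $\Omega'$, whence $\int_{\Omega'}|\dot u_k|^2 e^{2u_k^2}\le(C/\lambda_k)\int_\Omega|\dot u_k|^2 e^{u_k^2}=o(1)$ by Lemma~\ref{lemma4.1}. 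Thus the right-hand side of $-\Delta u_k=\lambda_k u_k e^{u_k^2}-\dot u_k e^{u_k^2}$ converges in $L^2(\Omega')$ — the first term by dominated convergence using the a.e.\ convergence $u_k\to u_\infty$, the second to $0$ — so $u_k\to u_\infty$ in $H^2_{loc}(\Omega\setminus\{x^{(1)},\dots,x^{(i_*)}\})$. The hard part, throughout, is the control of the parabolic error $\dot u_k e^{u_k^2}$: unlike in the elliptic problem of \cite{Adimurthi-Struwe}, \cite{Druet} the right-hand side is not $\lambda_k u_k e^{u_k^2}$ alone, and one must verify, using only the integral smallness supplied by Lemma~\ref{lemma4.1}, that it creates no spurious concentration, does not spoil the $L^1$-elliptic and Harnack estimates underlying the blow-up limit at any scale, and is $L^2_{loc}$-small away from the $x^{(i)}$. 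The secondary — but still delicate — difficulty is the scale-separation bookkeeping \eqref{4.5} in the induction.
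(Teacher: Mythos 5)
Your proposal is correct and follows essentially the same route as the paper's proof: the same selection of concentration points (global maximum first, then almost-maximizers of $\lambda_k\inf_j|x-x^{(j)}_k|^2u_k^2e^{u_k^2}$, with \eqref{4.6} as the stopping criterion), the same scaling \eqref{4.2}, control of the parabolic error through Lemma \ref{lemma4.1} combined with Cauchy--Schwarz and the cancellation built into \eqref{4.2}, identification of the bubble via the Chen--Li classification, termination through the mass quantization \eqref{4.4c}, and the same elliptic-regularity argument for $H^2_{loc}$-convergence away from the $x^{(i)}$ (your $L^1$ control of the error where the paper uses the $L^2$ bound \eqref{4.9} is an inessential variant). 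The one point you should make explicit is that in the induction step the pointwise bound on the nonlinearity and on $e^{u_k^2}$ near $x_k^{(i)}$, $i\ge 2$, cannot come from $u_k\le u_k(x_k^{(i)})$ (which fails there) but from the almost-maximality of the weighted quantity together with the scale separation $\inf_{j<i}|x_k^{(i)}-x_k^{(j)}|/r_k^{(i)}\to\infty$ --- precisely the paper's estimate \eqref{4.7} --- after which your argument, including the boundary-exclusion step (where on $\partial\Omega_k$ one has $\eta_k\to-\infty$ rather than $\eta_k=0$), goes through as in the paper.
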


\begin{proof}
Choose $x_k=x^{(1)}_k\in \Omega$ such that $u_k(x_k) = sup_{x\in \Omega}u_k$
and let $r_k = r^{(1)}_k$ be given by \eqref{4.2}. 
We claim that $r_k \rightarrow 0$ as $k \rightarrow\infty$.
Otherwise, \eqref{4.2} gives $\lambda_k u_k^2(x_k)e^{u_k^2(x_k)} \le C < \infty$, 
and with the help of Lemma \ref{lemma4.1} we can estimate
\begin{equation*}
   \int_{\Omega} |u_k(x_k)\dot{u}_k e^{u_k^2}|^2dx
   \le \lambda_k u_k^2(x_k) e^{u_k^2(x_k)}\bigg(\lambda_k^{-1}
   \int_{\Omega} \dot{u}_k^2 e^{u_k^2}\; dx\bigg)
   \rightarrow 0
\end{equation*}
as $k \rightarrow\infty$. By \eqref{1.1} then the sequence $(u_k(x_k)\Delta u_k)$ 
is bounded in $L^2$ and it follows that $u_k \rightarrow 0$ uniformly as 
$k \rightarrow\infty$ contradicting our assumption that $u_k(x_k)\rightarrow\infty$. 
Therefore $r_k \rightarrow 0$ as $k \rightarrow\infty$.

Suppose that we already have determined points $x^{(1)}_k,\dots, x^{(i-1)}_k$
such that \eqref{4.3} and \eqref{4.5} hold and let $x_k=x^{(i)}_k\in \Omega$ 
be such that 
\begin{equation}\label{4.6a}
     \lambda_k \inf_{j<i} |x_k-x^{(j)}_k|^2 u_k^2(x_k)e^{u_k^2(x_k)}
     = \sup_{x \in \Omega}\bigg( 
       \lambda_k \inf_{j<i} |x-x^{(j)}_k|^2 u_k^2(x)e^{u_k^2(x)}\bigg)
     \rightarrow\infty
\end{equation}
as $k \rightarrow\infty$. If no such $x_k=x^{(i)}_k$ exists the induction 
terminates, establishing \eqref{4.6}.

Choose $r_k = r^{(i)}_k \rightarrow 0$ satisfying \eqref{4.2}.
In view of \eqref{4.6a} we have 
$|x_k-x^{(j)}_k|/r_k \rightarrow \infty$ for all $j < i$; that is, 
half of \eqref{4.5}.
Moreover, denoting as
$v_k(x) = u_k(x_k+r_kx)$ the scaled function $u_k$ on the domain
\begin{equation*}
   \Omega_k = \{x; x_k + r_k x \in \Omega\},
\end{equation*}
with error $o(1) \rightarrow 0$ as $k \rightarrow \infty$ for any $L>0$
we can estimate
\begin{equation}\label{4.7}
     \sup_{x \in \Omega_k,\; |x| \le L} v_k^2(x)e^{v_k^2(x)}
     \le (1+o(1))v_k^2(0)e^{v_k^2(0)} = (1+o(1))u_k^2(x_k)e^{u_k^2(x_k)}.
\end{equation}

Let $\eta_k(x) = \eta^{(i)}_k(x)$ be defined as in \eqref{4.3}. Also denoting as
$\dot{v}_k(x) = \dot{u}_k(x_k+r_kx)$ the scaled function $\dot{u}_k=u_t(t_k)$, then we have
\begin{equation*}
   -  \Delta \eta_k = \lambda_k r_k^2 v_k(0)v_k e^{v_k^2} 
                          - r_k^2 \dot{v}_k v_k(0) e^{v_k^2} =:I_k + II_k 
   \hbox{ on }\Omega_k.
\end{equation*}
Observe that for any $L>0$ the bound \eqref{4.7} implies the uniform estimate
\begin{equation}\label{4.8}
 \begin{split}
   0 & < I_k= \lambda_k r_k^2 v_k(0)v_k e^{v_k^2} 
   \le \lambda_k r_k^2 \sup \{v_k^2(0)e^{v_k^2(0)},v_k^2 e^{v_k^2}\}\\
   & \le (1+o(1)) \lambda_k r_k^2 v_k^2(0)e^{v_k^2(0)} 
   = (4+o(1)) \hbox{ on } B_L(0)\; ;
 \end{split}
\end{equation}
moreover, with \eqref{4.2} and Lemma \ref{lemma4.1} 
for the second term we have 
\begin{equation}\label{4.9}
 \begin{split}
   \int_{\Omega_k \cap B_L(0)}|II_k|^2dx
   &\le (1+o(1))\lambda_k r_k^2 v_k^2(0) e^{v_k^2(0)}\bigg(\lambda_k^{-1}
    \int_{\Omega_k \cap B_L(0)}r_k^2 \dot{v}_k^2 e^{v_k^2}\; dx\bigg)\\
   &= (4+o(1))\lambda_k^{-1}
     \int_{\Omega \cap B_{Lr_k}(x_k)}|u_t(t_k)|^2 e^{u_k^2}\; dx
   \rightarrow 0
 \end{split}
\end{equation}
with error $o(1) \rightarrow 0$ as $k \rightarrow \infty$.

Note that \eqref{4.6a} forces $v_k(0) \rightarrow \infty$.
Since \eqref{4.7} also implies the bound 
\begin{equation}\label{4.10}
   2\eta_k = v_k^2-v_k^2(0)-(v_k-v_k(0))^2 \le o(1)
   \hbox{ on } \Omega_k \cap B_L(0)\; ,
\end{equation}
it follows that
\begin{equation*}
    dist(0,\partial \Omega_k) = \frac{dist(x_k,\partial \Omega)}{r_k}\to \infty .
\end{equation*}
Otherwise, by \eqref{4.8} - \eqref{4.10}, the mean value property 
of harmonic functions and the fact that $\eta_k \rightarrow -\infty$
on $\partial \Omega_k$ as $k \rightarrow \infty$ we have
locally uniform convergence $\eta_k \rightarrow -\infty$ in $\Omega_k$,
which contradicts the fact that $\eta_k(0)=0$. By the same reasoning we
also may assume that as $k \rightarrow \infty$
a subsequence $\eta_k \rightarrow \eta_{\infty}$ in $H^2_{loc}$ and
locally uniformly. Recalling that $v_k(0) \rightarrow \infty$, then we also
have
\begin{equation}\label{4.11}
   (v_k - v_k(0))\rightarrow 0,\ \rho_k := \frac{v_k}{v_k(0)}\rightarrow 1,\
   a_k := 1 + \frac{\eta_k}{2v_k^2(0)}\rightarrow 1
\end{equation}
locally uniformly.
Observing that $e^{v_k^2-v_k^2(0)} = e^{2a_k \eta_k}$ and using \eqref{4.2},
we conclude
\begin{equation*}
   I_k = \lambda_k r_k^2 v_k(0)v_k e^{v_k^2} = 4 \rho_k e^{2a_k \eta_k}
   \rightarrow 4 e^{2\eta_{\infty}}
\end{equation*}
locally uniformly. Thus, $\eta_{\infty}$ solves \eqref{4.4b}; 
moreover, for any $L > 1$ by \eqref{2.2} or \eqref{0.5} we have 
\begin{equation*}
 \begin{split}
   4 \int_{B_L(0)}e^{2 \eta_{\infty}} \; dx 
   & = \lim_{k \rightarrow \infty}\int_{B_{L}(0)}4\rho_k^2 e^{2a_k \eta_k}dx
   = \lim_{k \rightarrow \infty}\int_{B_{Lr_k}(x_k)}\lambda_k u_k^2 e^{u_k^2} \; dx
   \le \Lambda_0.
  \end{split}
\end{equation*}
By Fatou's lemma, upon letting $L \rightarrow \infty$ we find 
$\int_{{\mathbb R}^2}e^{2 \eta_{\infty}}dx < \infty$.
In view of the equation $\eta(0) = \lim_{k \rightarrow \infty}\eta_k(0) = 0$ 
together with \eqref{4.10}, the classification of Chen-Li \cite{Chen-Li} 
then yields that $\eta_{\infty} = \eta - \log 2 = \eta_0$, as claimed,
which completes the induction step. In view of \eqref{4.4c} the 
induction must terminate when $i > \Lambda_0/\Lambda_1$.

Finally, to see the asserted local $H^2$-convergence away from $x_i$, 
$1\le i \le i_*$, observe that by \eqref{4.6} and estimates similar to
\eqref{4.8}, \eqref{4.9} for any $x_0$ with 
\begin{equation*}
   \inf_{1 \le i \le i_*} |x_0 - x_k^{(i)}| \ge 3R_0 >0
\end{equation*} 
the sequence $(\Delta u_k)$ is bounded in $L^2(B_{2R_0}(x_0))$. 
Boundedness of $(u_k)$ on $B_{R_0}(x_0)$ and convergence 
$u_k \rightarrow u_{\infty}$ in $H^2(B_{R_0}(x_0))$ 
then follow from boundedness of $(E(u_k))$ and elliptic regularity.
\end{proof}

\section{Quantization}
Throughout this section we continue to assume that 
$\limsup_{t\rightarrow \infty}||u(t)||_{L^{\infty}} = \infty$ and
for a sequence $(t_k)$ as determined in Lemma \ref{lemma4.1} we let 
$u_k=u(t_k)\rightharpoondown u_{\infty}$ 
weakly in $H^1_0(\Omega)$ as $k \rightarrow \infty$, and
$\dot{u}_k=u_t(t_k)$ as above. 
By \eqref{2.2} or \eqref{0.5}, respectively, with error $o(1)\rightarrow 0$ 
there holds
\begin{equation} \label{5.1}
   \int_{\Omega} |\nabla u_k|^2 \; dx = (1+o(1))
   \lambda_k \int_{\Omega} u_k^2 e^{2u_k^2}\; dx 
   \rightarrow \Lambda
\end{equation}
for some $\Lambda < \infty$. By Theorem \ref{thm4.1}, moreover, we may assume
that 
\begin{equation*}
  |\nabla u_k|^2dx \overset{w^*}{\rightharpoondown} |\nabla u_{\infty}|^2dx 
  + \sum_{i=1}^{i_*} L^{(i)} \delta_{x^{(i)}}
\end{equation*}
and similarly 
\begin{equation*}
  \lambda_k u_k^2 e^{u_k^2} \overset{w^*}{\rightharpoondown} 
  \lambda_{\infty} u_{\infty}^2 e^{u_{\infty}^2} 
  + \sum_{i=1}^{i_*} \Lambda^{(i)} \delta_{x^{(i)}};
\end{equation*}
weakly in the sense of measures, where $\Lambda^{(i)} \ge \Lambda_1= 4\pi$
on account of \eqref{4.4c}. In fact, we have $L^{(i)}=\Lambda^{(i)}$, as
may be seen from the equations 
\begin{equation*}
  |\nabla u_k|^2 - \Delta(u_k^2/2) =\lambda_k u_k^2 e^{u_k^2} - \dot{u}_ku_ke^{u_k^2}
\end{equation*}
and 
\begin{equation*}
  |\nabla u_{\infty}|^2 - \Delta(u_{\infty}^2/2) 
  =\lambda_{\infty}u_{\infty}^2 e^{u_{\infty}^2}
\end{equation*}
that we obtain upon multiplying the equations \eqref{1.1}, \eqref{1.01} for $u_k$ 
and $u_{\infty}$ by the functions $u_k$ and $u_{\infty}$, 
respectively, together with the estimate \eqref{0.7}
that results from \eqref{5.1} and Lemma \ref{lemma4.1}. Finally, we use
convergence 
$$
   \int_{\Omega}\Delta(u_k^2-u_{\infty}^2)\varphi\; dx =
   \int_{\Omega}(u_k^2-u_{\infty}^2)\Delta\varphi\;dx\rightarrow 0 \quad (k \rightarrow \infty)
$$
for any $\varphi \in C^{\infty}(\overline{\Omega})$ and observe that this set of testing 
functions allows to separate point masses concentrated at points 
$x^{(i)}\in \overline{\Omega}$ to conclude.

Similar to \cite{Druet} and \cite{Struwe07} we then obtain the
following quantization result for the ``defect'' $\Lambda^{(i)}$ at each $x^{(i)}$.

\begin{theorem} \label{thm5.1}
We have $\Lambda^{(i)}= 4\pi l_i=l_i \Lambda_1$ for some $l_i \in {\mathbb N}$, 
$1 \le i \le i_*$.
\end{theorem}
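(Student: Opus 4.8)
The plan is to establish the quantization $\Lambda^{(i)} = 4\pi l_i$ by a Pohozaev-type argument on dyadic annuli around each concentration point, following the strategy of Druet \cite{Druet} and Struwe \cite{Struwe07}. Fix one concentration point, say $x^{(i)} = 0$ for notational simplicity, and work with the radial average of $u_k$. Set $w_k(r) = \intbar_{\partial B_r} u_k\, d\sigma$. The key observation, already extracted in Theorem \ref{thm4.1} via \eqref{4.6}, is the pointwise bound $\lambda_k |x|^2 u_k^2(x) e^{u_k^2(x)} \le C$ on $\Omega$ (away from the other blow-up points), which controls the nonlinearity on annuli $B_{2s}\setminus B_s$ once $s \gg r_k$. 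The first step is to show that on the ``neck region'' $r_k \ll |x| \ll 1$ the function $u_k$ behaves like a radial harmonic function, i.e. $u_k(x) \approx w_k(|x|)$ with $w_k(r) = a_k - b_k \log r + o(\cdot)$ for suitable constants $a_k, b_k > 0$; this uses the smallness of $\dot u_k$ from Lemma \ref{lemma4.1} (so that the $\dot u_k u_k e^{u_k^2}$ term is negligible in $L^1$ on the neck, by the Cauchy--Schwarz estimate \eqref{0.7} applied locally) together with the elliptic estimates of Theorem \ref{thm4.1}.

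The core of the argument is a Pohozaev identity. Multiplying equation \eqref{1.1} for $u_k$ by $(x \cdot \nabla u_k)$ and integrating over an annulus $B_{s}(0) \setminus B_{\delta r_k}(0)$, or alternatively integrating the identity for $r \frac{d}{dr}$ applied to the radial ODE satisfied approximately by $w_k$, produces boundary terms at the inner and outer radius plus a bulk term involving $\lambda_k \int u_k^2 e^{u_k^2}$ (after writing $u_k e^{u_k^2}(x\cdot \nabla u_k) \approx \frac{1}{2}x\cdot \nabla(e^{u_k^2})$ modulo lower-order corrections coming from the factor $u_k^2$ versus $u_k^{-1}\cdot u_k$, which must be tracked carefully). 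The inner boundary term, evaluated at radius $\sim L r_k$ and then letting $L\to\infty$, is controlled by the bubble profile $\eta_0$ and yields exactly the quantum $\Lambda_1 = 4\pi$ (this is the content of \eqref{4.4c}); the outer boundary term is computed from the logarithmic expansion $w_k(r) \sim a_k - b_k\log r$. Matching these via the Pohozaev balance forces a relation of the form $\Lambda^{(i)} = 4\pi \cdot (\text{something built from } b_k \text{ and } a_k)$, and the neck analysis shows the ratio $a_k/b_k$ is itself asymptotically an integer multiple of a fixed quantity — concretely, one iterates the bubble-extraction over a finite sequence of scales $r_k = r_k^{(0)} \gg r_k^{(1)} \gg \cdots$, each contributing one more $4\pi$, until the energy on the remaining neck tends to zero. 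The integrality $l_i \in \mathbb{N}$ emerges because between consecutive scales the energy $\lambda_k \int_{\text{annulus}} u_k^2 e^{u_k^2}$ either is $o(1)$ (neck, no contribution) or concentrates a full $4\pi$ (another bubble), with no intermediate value possible; this dichotomy is exactly where the pointwise bound \eqref{4.6} and the classification of Chen--Li \cite{Chen-Li} are used together.

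The main obstacle I anticipate is the precise control of the neck region and the ruling-out of ``energy in the neck'': one must show that $\lambda_k \int_{B_s \setminus B_{Lr_k}} u_k^2 e^{u_k^2}\, dx \to 0$ as $k\to\infty$, $L\to\infty$, $s \to 0$ in the appropriate order, i.e. that no energy leaks out on intermediate scales. This is the standard but delicate part of any quantization proof; here it is complicated by the non-standard nonlinearity $u e^{u^2}$ (with the extra $u$-factor compared to the Liouville case $e^u$) and by the presence of the parabolic error term $\dot u_k u_k e^{u_k^2}$. The bound \eqref{4.6} gives $\lambda_k u_k^2 e^{u_k^2} \le C|x|^{-2}$ pointwise, so $\lambda_k \int_{B_s \setminus B_{Lr_k}} u_k^2 e^{u_k^2} \le C\log(s/(Lr_k))$, which is \emph{not} small — one needs a refined estimate showing the constant improves, typically by comparing $u_k$ on the neck with the harmonic extension of its boundary values and running a Harnack/ODE argument on $w_k$ to get $w_k(r)^2 e^{w_k(r)^2} \lambda_k r^2 \to 0$ strictly inside the neck. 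The parabolic term is handled because Lemma \ref{lemma4.1} makes $\lambda_k^{-1}\int \dot u_k^2 e^{u_k^2} \to 0$, so by Cauchy--Schwarz as in \eqref{0.7} its contribution to every integrated identity is $o(1)$ uniformly; thus, modulo this uniform smallness, the entire analysis reduces to the time-independent case treated in \cite{Druet} and \cite{Struwe07}, and the conclusion $\Lambda^{(i)} = 4\pi l_i$ follows as there.
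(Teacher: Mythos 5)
Your plan follows essentially the same route as the paper's proof: decay of the spherical averages on the neck, a Pohozaev-type identity on annuli giving the dichotomy (either the neck carries $o(1)$ energy or a new bubble appears carrying exactly $\Lambda_1=4\pi$ by the Chen--Li classification), iteration over finitely many scales, and control of the parabolic term $\dot u_k u_k e^{u_k^2}$ via Lemma \ref{lemma4.1} and Cauchy--Schwarz as in \eqref{0.7} --- precisely the Druet/Struwe scheme implemented in Lemmas \ref{lemma5.1}--\ref{lemma5.3}, Propositions \ref{prop5.1}, \ref{prop5.1g} and \ref{blow}. The one ingredient you leave implicit is the oscillation estimate of Proposition \ref{prop5.2} (the analogue of Druet's gradient estimate), which is what allows the reduction of the general case to the radial-average analysis, but this is exactly the tool the paper also imports from \cite{Druet}.
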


For the proof we argue as in \cite{Struwe07}. We first consider the
radial case. 

\subsection{The radial case}
Let $\Omega = B_R(0)=: B_R$
and assume that $u(t,x) = u(t,|x|)$. 
In this case by Theorem \ref{thm4.1} for any $i \le i_*$ we have 
$r_k^{-1} x_k \rightarrow 0$ as $k \rightarrow \infty$, where 
$x_k = x^{(i)}_k$ and $r_k = r^{(i)}_k$ is given by \eqref{4.2};
otherwise, the blow-up limit 
$\eta_0 = \lim_{k \rightarrow \infty}\eta^{(i)}_k$ 
could not be radially symmetric. 
In particular, from \eqref{4.5} it follows that $i_* =1$; 
moreover, by \eqref{4.3} we have 
$u_k^2(x_k) = \sup_{\Omega} u_k^2 = u_k^2(0)+ o(1)$.
Thus, up to an error $o(1) \rightarrow 0$ locally uniformly 
as $k \rightarrow \infty$
we may replace the original function $\eta_k=\eta^{(1)}_k$ 
defined in \eqref{4.3} by the function 
\begin{equation*}
      \eta_k(x) = u_k(0)(u_k(r_k x) - u_k(0)).
\end{equation*}
 
Observe that by radial symmetry or Theorem \ref{thm4.1} we also have 
convergence $u_k \rightarrow u_{\infty}$ locally uniformly 
away from $x = 0$ as $k \rightarrow \infty$.

For $|x| = r$ let $u_k(r) = u_k(x)$ and set 
\begin{equation*}
   \lambda_k u_k^2 e^{u_k^2} =: e_k \hbox{ in } \Omega\, .
\end{equation*}
We also denote as 
\begin{equation*}
      w_k(x) = u_k(0)(u_k(x) - u_k(0))
\end{equation*}
the unscaled function $\eta_k$, satisfying the equation
\begin{equation*}
   - \Delta w_k = \lambda_k u_k(0) u_k e^{u_k^2} - d_k,
\end{equation*}
where the term $d_k = u_k(0) \dot{u}_k e^{u_k^2}$ for any $L > 0$ 
can be estimated
\begin{equation} \label{5.2}
 \begin{split}
   & \int_{B_{Lr_k}} |d_k|\; dx\\
   & \quad \le \sup_{B_{Lr_k}}\bigg(\frac{u_k(0)}{u_k}\bigg)
   \bigg(\lambda_k \int_{B_{Lr_k}} u_k^2 e^{u_k^2}dx \cdot
   \lambda_k^{-1} \int_{B_{Lr_k}}\dot{u}_k^2 e^{u_k^2}\; dx\bigg)^{1/2}.
 \end{split}
\end{equation}
Hence by Theorem \ref{thm4.1}, Lemma \ref{lemma4.1}, and \eqref{5.1} 
we conclude that 
$d_k\rightarrow 0$ in $L^1(B_{Lr_k})$ for any $L > 0$ as 
$k \rightarrow \infty$. Finally, we set 
\begin{equation*}
   \lambda_k u_k(0) u_k e^{u_k^2} =:f_k \hbox{ in } \Omega = B_R
\end{equation*}
and for $0 < r < R$ let 
\begin{equation*}
   \Lambda_k(r) = \int_{B_r} e_k \; dx, \;
   \sigma_k(r) = \int_{B_r} f_k \; dx ,\; 
\end{equation*}
Observe that with error $o(1) \rightarrow 0$ as $k \rightarrow \infty$ we have
$e_k \le (1+o(1))f_k $, $\Lambda_k(r)\le \sigma_k(r)+o(1)$; 
moreover, Theorem \ref{thm4.1} implies
\begin{equation} \label{5.3}
   \lim_{L \rightarrow \infty} \lim_{k \rightarrow \infty} \Lambda_k(Lr_k) 
   = \lim_{L \rightarrow \infty} \lim_{k \rightarrow \infty} \sigma_k(Lr_k)
   = \lim_{L \rightarrow \infty}  4 \int_{B_L} e^{2\eta_0}\; dx 
   = \Lambda_1.
\end{equation}

We can now show our first decay estimate. 
Let $u_k' = \frac{\partial u_k}{\partial r}$, and so on.

\begin{lemma}\label{lemma5.1}
For any $0 < \varepsilon < 1$, letting $T_k > 0$ be minimal such that 
$u_k(T_k) = \varepsilon u_k(0)$, for any constant $b < 2$ and sufficiently 
large $k$ there holds 
\begin{equation*}
   w_k(r) \le b \log\left(\frac{r_k}{r}\right) \hbox{ on } B_{T_k}
\end{equation*}
and we have
\begin{equation*}
   \lim_{k \rightarrow \infty}\Lambda_k(T_k) = 
   \lim_{k \rightarrow \infty}\sigma_k(T_k) = \Lambda_1 = 4\pi.
\end{equation*}
\end{lemma}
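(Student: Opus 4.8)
The plan is to obtain the logarithmic decay estimate for $w_k$ by a comparison/ODE argument on the annular region $B_{T_k}\setminus B_{Lr_k}$, exactly as in the elliptic case treated in \cite{Struwe07}. First I would record what is already known on the inner scale: by Theorem \ref{thm4.1}, on $B_{Lr_k}$ the rescaled function $\eta_k$ converges to $\eta_0=\log\frac{1}{1+|x|^2}$, so $w_k(Lr_k) = \eta_k(L) + o(1) = -2\log L + o(1)$, which already has the desired form $b\log(r_k/r)$ with room to spare for any $b<2$ once $L$ is large. On the annulus, from the equation $-\Delta w_k = f_k - d_k$ with $f_k\ge 0$ and $\|d_k\|_{L^1(B_{Lr_k})}\to 0$, I would first argue that $\|d_k\|_{L^1(B_{T_k})}\to 0$ as well (using \eqref{5.2}, Lemma \ref{lemma4.1}, and \eqref{5.1}, noting that $u_k(0)/u_k \le 1/\varepsilon$ on $B_{T_k}$), so that $-\Delta w_k$ is, up to a small $L^1$ error, a nonnegative measure of total mass $\le \Lambda_0$. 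Integrating over $B_r$ and using radial symmetry gives
\begin{equation*}
   -2\pi r w_k'(r) = \int_{B_r}(f_k - d_k)\,dx = \sigma_k(r) + o(1),
\end{equation*}
hence $w_k'(r) = -\frac{1}{2\pi r}(\sigma_k(r)+o(1))$, and integrating from $Lr_k$ to $r$ yields
\begin{equation*}
   w_k(r) = w_k(Lr_k) - \frac{1}{2\pi}\int_{Lr_k}^r \frac{\sigma_k(s)+o(1)}{s}\,ds
   \le -2\log L + o(1) + \big(\text{something}\big)\log\frac{r_k}{r}.
\end{equation*}
The point is that $\sigma_k(s) \ge \Lambda_k(s)/(1+o(1))$ and $\Lambda_k(s) = \int_{B_s}e_k\,dx$ is essentially the energy captured up to radius $s$; since $\sigma_k(Lr_k)\to\Lambda_1 = 4\pi$ and $\sigma_k$ is nondecreasing, we get $\sigma_k(s)\ge 4\pi - o(1)$ for $s\ge Lr_k$, which forces $w_k'(s) \le -\frac{2-o(1)}{s}$ and hence $w_k(r)\le b\log(r_k/r)$ on $B_{T_k}$ for any $b<2$, once $L$ and $k$ are large enough and the contribution of $w_k(Lr_k)+2\log L = o(1)$ is absorbed.

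For the second assertion I would use the decay estimate just proved to control the Moser--Trudinger-type integrals $\Lambda_k$ and $\sigma_k$ on the remaining annulus $B_{T_k}\setminus B_{Lr_k}$ and show they contribute nothing in the limit $k\to\infty$, $L\to\infty$. Concretely, on $B_{T_k}\setminus B_{Lr_k}$ we have $u_k = u_k(0) + w_k/u_k(0)$, so
\begin{equation*}
   u_k^2 = u_k^2(0) + 2w_k + \frac{w_k^2}{u_k^2(0)},
\end{equation*}
and using \eqref{4.2} in the form $\lambda_k r_k^2 u_k^2(0)e^{u_k^2(0)} = 4+o(1)$ together with $w_k\le b\log(r_k/r)$ one estimates the density $e_k = \lambda_k u_k^2 e^{u_k^2}$ pointwise by a constant times $r_k^{-2}(r_k/r)^{2b}\rho_k^2 = \mathrm{const}\cdot r^{-2b}r_k^{2b-2}\rho_k^2$ with $\rho_k = u_k/u_k(0)\to 1$; integrating over the annulus (where $r$ ranges over $[Lr_k,T_k]$) and choosing $b$ with $1<b<2$ gives $\int_{B_{T_k}\setminus B_{Lr_k}}e_k\,dx \le \mathrm{const}\cdot L^{2-2b}\to 0$ as $L\to\infty$ uniformly in $k$. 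The same bound applies to $\sigma_k$ since $f_k \le (1+o(1))e_k/\varepsilon$ on this region (as $u_k(0)/u_k\le 1/\varepsilon$). Combined with \eqref{5.3} this yields $\lim_{k\to\infty}\Lambda_k(T_k) = \lim_{k\to\infty}\sigma_k(T_k) = \Lambda_1 = 4\pi$.

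The main obstacle I anticipate is making the bootstrap in the first part rigorous and non-circular: the decay estimate $w_k\le b\log(r_k/r)$ and the near-constancy $\sigma_k(s)\approx 4\pi$ on the whole range $[Lr_k,T_k]$ mutually reinforce each other, so one must set it up as a continuity/maximal-interval argument — define $S_k\in[Lr_k,T_k]$ maximal such that $w_k(r)\le b\log(r_k/r)$ on $[Lr_k,S_k]$, show that on this interval the energy outside $B_{Lr_k}$ is $o(1)$ (hence $\sigma_k \le 4\pi+o(1)$, and also $\ge 4\pi-o(1)$ by monotonicity from the inner scale), deduce the strict inequality $w_k'\le -(2-o(1))/r$ which improves the bound, and conclude $S_k = T_k$. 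Keeping careful track that the error terms $o(1)$ depend only on $k$ and $L$ (not on $r$ or on the choice of interval) is the delicate book-keeping here; everything else is a routine integration of the radial ODE.
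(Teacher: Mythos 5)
Your first half (the decay estimate) follows the paper's own argument almost verbatim and is correct: the flux identity $2\pi t\,w_k'(t)=\int_{B_t}\Delta w_k\,dx=-\sigma_k(t)+o(1)$, the control of $\int_{B_t}|d_k|\,dx$ via $u_k\ge\varepsilon u_k(0)$ on $B_{T_k}$ together with Lemma \ref{lemma4.1} and \eqref{5.1}, and the lower bound $\sigma_k(t)\ge\sigma_k(Lr_k)\ge\Lambda_1-o(1)$ from monotonicity and \eqref{5.3} are exactly the paper's steps \eqref{5.4}--\eqref{5.5}. In particular the circularity you worry about at the end is not there: the lower bound on $\sigma_k$ needs only $f_k\ge 0$ and the inner-scale limit, and the $o(1)$ control of $d_k$ needs only the minimality of $T_k$, so no maximal-interval bootstrap is required; the proof is a single pass.

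The genuine gap is in your tail estimate for the second assertion. You bound the density by a constant times $r_k^{-2}(r_k/r)^{2b}\rho_k^2$, i.e.\ you replace $e^{u_k^2-u_k^2(0)}=e^{2w_k+w_k^2/u_k^2(0)}$ by $e^{2w_k}$. But the discarded term $w_k^2/u_k^2(0)$ is \emph{positive} and on $B_{T_k}$ can be as large as $(1-\varepsilon)^2u_k^2(0)$ (near $r=T_k$ one has $w_k\approx-(1-\varepsilon)u_k^2(0)$), so it cannot be absorbed into a constant; already for the standard bubble profile the claimed pointwise bound fails for $b>1+\varepsilon$, and with it the conclusion ``tail $\le C\,L^{2-2b}\to 0$ for any $1<b<2$''. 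The correct treatment, which is the one point where the paper's proof is more delicate than your sketch, uses $u_k\ge\varepsilon u_k(0)$ once more to write
\begin{equation*}
   u_k^2-u_k^2(0)=2\Big(1+\frac{w_k}{2u_k^2(0)}\Big)w_k\le(1+\varepsilon)w_k
   \quad\hbox{on } B_{T_k},
\end{equation*}
since $1+\frac{w_k}{2u_k^2(0)}\ge\frac{1+\varepsilon}{2}$ and $w_k\le 0$ there. This gives the density decay $(r_k/r)^{(1+\varepsilon)b}$ rather than $(r_k/r)^{2b}$, so the exponent only beats $2$ if $b$ is chosen close to $2$ depending on $\varepsilon$; the paper takes $(1+\varepsilon)b=2+\varepsilon$, which yields a tail of order $\varepsilon^{-1}L^{-\varepsilon}\le\varepsilon$ and then the arbitrariness of $\varepsilon$ gives $\lim_k\Lambda_k(T_k)=\lim_k\sigma_k(T_k)=\Lambda_1$. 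With this correction (and the comparison $f_k\le\varepsilon^{-1}e_k$, $e_k\le(1+o(1))f_k$ you already note) your argument closes.
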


\begin{proof} Note that $T_k \rightarrow 0$ as $k \rightarrow \infty$
in view of the locally uniform convergence $u_k \rightarrow u_{\infty}$ 
away from $0$.

Since $u_k(t) \ge \varepsilon u_k(0)$ for $Lr_k \le t \le T_k$, from 
\eqref {5.3} and an estimate similar to \eqref {5.2} for all such $t=t_k$ we obtain 
\begin{equation}\label{5.4}
  \begin{split} 
     2\pi t w_k'(t)
     & = \int_{\partial B_t}\partial_{\nu} w_k  \; do
     = \int_{B_t}\Delta w_k  \; dx\\
     & = - \sigma_k(t) + o(1) \le - \Lambda_1 + o(1)
  \end{split}
\end{equation}
with error $o(1) \rightarrow 0$ uniformly in $t$, if first $k \rightarrow \infty$ 
and then $L \rightarrow \infty$. For any $b < 2$ and sufficiently large $L \ge L(b)$,
for $k \ge k_0(L)$ we thus obtain that
\begin{equation*}
  w_k'(t) \le - \frac{b}{t} 
\end{equation*}
for all $Lr_k \le t \le T_k$. Since $\eta_0(L) < - b \log L$ for all $L > 0$,
in view of Theorem \ref{thm4.1} clearly we may choose $k_0(L)$ such that 
$\eta_k(L) < - b \log L$ for all $k \ge k_0(L)$. For any such $k$ and any 
$r \in [Lr_k,T_k]$, upon integrating from $Lr_k$ to $r$ then we find
\begin{equation}\label{5.5}
  \begin{split}
  w_k(r) & \le w_k(Lr_k) - b \log \left(\frac{r}{Lr_k}\right)\\
  & = \eta_k(L) + b \log L + b \log \left(\frac{r_k}{r}\right)
  \le b \log \left(\frac{r_k}{r}\right),
  \end{split}
\end{equation}
as claimed. For $r \le Lr_k$ the asserted bound already follows 
from Theorem \ref{thm4.1}.

Inserting \eqref{5.5} in the definition
of $f_k$ and recalling \eqref{4.2}, for $Lr_k \le r \le T_k$ 
with sufficiently large $L > 0$ and $k \ge k_0(L)$ then we obtain 
\begin{equation*}
 \begin{split}
  f_k 
  &= \lambda_k (u_k^2(0)+w_k) e^{u_k^2(0)} e^{2(1 + \frac{w_k}{2u_k^2(0)}) w_k}\\
  &\le \lambda_k r_k^2 u_k^2(0) e^{u_k^2(0)} r_k^{-2} e^{(1 + \varepsilon) w_k}
  \le 4 r_k^{-2}  \left(\frac{r_k}{r}\right)^{(1 + \varepsilon)b}.
 \end{split}
\end{equation*}
Choosing $b < 2$ such that $(1 + \varepsilon)b = 2 + \varepsilon$, upon
integrating over $B_{T_k}$ we obtain
\begin{equation*}
 \begin{split}
   \sigma_k({T_k}) & = \int_{B_{T_k}} f_k \; dx 
   \le \Lambda_1 + \int_{B_{T_k} \setminus B_{Lr_k}} f_k \; dx \\
   &\le \Lambda_1 + C  r_k^{-2} \int_{B_{T_k} \setminus B_{Lr_k}}
   \left(\frac{r_k}{r}\right)^{2 + \varepsilon} \; dx 
   \le \Lambda_1 + C \varepsilon^{-1}
   \left(\frac{r_k}{Lr_k}\right)^{\varepsilon}
   \le \Lambda_1 + \varepsilon,
 \end{split}
\end{equation*}
if first $L > L_0(\varepsilon)$ and then $k \ge k_0(L)$ 
is chosen sufficiently large. Since $\varepsilon > 0$ is arbitrary, the proof 
is complete.
\end{proof}

If we now choose $\varepsilon_k \downarrow 0$ such that with 
$s_k = T_k(\varepsilon_k)$ we have $u_k(s_k)\rightarrow \infty$, 
by Theorem \ref{thm4.1} we also have $r_k/s_k \rightarrow 0$, $s_k \rightarrow 0$ 
as $k \rightarrow \infty$. That is, we can achieve that 
\begin{equation}\label{5.6}
   \lim_{k \rightarrow \infty} \Lambda_k(s_k) =\Lambda_1, \; 
   \lim_{k \rightarrow \infty}\frac{u_k(s_k)}{u_k(r_k)} 
   = \lim_{k \rightarrow \infty}\frac{r_k}{s_k}
   = \lim_{k \rightarrow \infty} s_k = 0.
\end{equation}
In addition, from \eqref{5.3} we obtain that 
\begin{equation}\label{5.7}
   \lim_{L \rightarrow \infty}\lim_{k \rightarrow \infty} 
   (\Lambda_k(s_k) - \Lambda_k(Lr_k)) = 0.
\end{equation}

Let $r_k=r_k^{(1)}$, $s_k=s_k^{(1)}$. 
We now proceed by iteration. Suppose that for 
some integer $l \ge 1$ we already have determined numbers 
$r_k^{(1)} < s_k^{(1)} < \dots < r_k^{(l)} < s_k^{(l)}$ such that
\begin{equation}\label{5.8}
   \lim_{k \rightarrow \infty} \Lambda_k(s_k^{(l)}) = l \Lambda_1
\end{equation}
and
\begin{equation}\label{5.9}
   \lim_{L \rightarrow \infty}\lim_{k \rightarrow \infty} 
      (\Lambda_k(s_k^{(l)})-\Lambda_k(Lr_k^{(l)})) 
   = \lim_{k \rightarrow \infty}\frac{u_k(s_k^{(l)})}{u_k(r_k^{(l)})}
   = \lim_{k \rightarrow \infty}\frac{r_k^{(l)}}{s_k^{(l)}} 
   = \lim_{k \rightarrow \infty} s_k^{(l)} = 0.
\end{equation}
For $0 < s < t < R$ let 
\begin{equation*}
   N_k(s,t) =\int_{B_t \setminus B_s} e_k \; dx 
   = \int _{B_t \setminus B_s} \lambda_k u_k^2 e^{u_k^2}dx 
   = 2\pi \int_s^t \lambda_k r u_k^2 e^{u_k^2}dr
\end{equation*}
and define
\begin{equation*}
   P_k(t) = t \frac{\partial}{\partial t}N_k(s,t) = 
   t \int_{\partial B_t} e_k \; do 
   = 2\pi  \lambda_k t^2 u_k^2(t) e^{u_k^2(t)}.
\end{equation*}
Note that \eqref{4.6} implies the uniform bound $P_k \le C$; moreover,
with a uniform constant $C_0$ for any $t$ we have
\begin{equation}\label{5.10}
  \begin{split} 
   \inf_{t/2 \le t' \le t}P_k(t') \le C_0 N_k(t/2,t).
  \end{split} 
\end{equation}

A preliminary quantization now can be achieved, as follows.

\begin{lemma}\label{lemma5.2}
i) Suppose that for some $t_k > s_k^{(l)}$ there holds
\begin{equation*}
   \sup_{s_k^{(l)} <t<t_k} P_k(t) \rightarrow 0 \hbox{ as } k \rightarrow \infty.
\end{equation*}
Then we have
\begin{equation*}
   \lim_{k \rightarrow \infty}N_k(s_k^{(l)} ,t_k) = 0.
\end{equation*}
ii) Conversely, if for some $t_k > s_k^{(l)}$ and a subsequence $(u_k)$ there holds
\begin{equation*}
   \lim_{k \rightarrow \infty}N_k(s_k^{(l)} ,t_k) = \nu_0 > 0,   \
   \lim_{k\rightarrow \infty} t_k = 0,
\end{equation*}
then either $\nu_0 \ge \pi$, or we have 
\begin{equation*}
   \liminf_{k \rightarrow \infty}P_k(t_k) \ge \nu_0
\end{equation*}
and
\begin{equation*}
   \lim_{L \rightarrow \infty}\liminf_{k \rightarrow \infty}N_k(s_k^{(l)} ,L t_k) 
   \ge \pi, \;
   \lim_{L \rightarrow \infty}\limsup_{k \rightarrow \infty} N_k(s_k^{(l)} ,t_k/L) = 0.
\end{equation*}
\end{lemma}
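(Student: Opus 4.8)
The plan is to pass to logarithmic coordinates, reducing the lemma to an essentially one–dimensional analysis. Writing $\tau=\log r$ and $v_k(\tau)=u_k(e^\tau)$, the radial form of \eqref{1.1} becomes
\[
   -v_k''(\tau)=\frac{P_k(e^\tau)}{2\pi\,v_k(\tau)}-e^{2\tau}\dot u_k(e^\tau)\,e^{u_k^2(e^\tau)}
\]
on the $\tau$–interval corresponding to $s_k^{(l)}<r<t_k$, and one has the exact identity $N_k(s,t)=\int_{\log s}^{\log t}P_k(e^\tau)\,d\tau$. Two facts are used throughout: first, by Lemma \ref{lemma4.1}, \eqref{5.1} and Cauchy--Schwarz, $\int_{\log s}^{\log t}e^{2\tau}|\dot u_k(e^\tau)|\,e^{u_k^2(e^\tau)}\,d\tau=\tfrac1{2\pi}\int_{B_t\setminus B_s}|\dot u_k|\,e^{u_k^2}\,dx=o(1)$ uniformly in $0<s<t<R$, so $v_k$ is concave up to an $L^1$–small error; second, $0\le P_k\le C$ by \eqref{4.6}, and the averaged flux identity of \eqref{5.4} gives $\phi_k(\tau):=-v_k'(\tau)=\tfrac1{2\pi}\int_{B_{e^\tau}}\lambda_k u_k e^{u_k^2}\,dx+o(1)$, which is non-negative, non-decreasing modulo $o(1)$, and of size $o(1)$ at $\tau=\log s_k^{(l)}$ (as $\sigma_k(s_k^{(l)})\to l\Lambda_1$ by Lemma \ref{lemma5.1} while $u_k(0)\to\infty$).

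For part i) put $\alpha_k=\log s_k^{(l)}$, $\beta_k=\log t_k$, multiply the ODE by $v_k$ and integrate by parts. Keeping the weight $u_k$ inside the integrals one estimates the resulting error as $\bigl|2\pi\int_{\alpha_k}^{\beta_k}v_k e^{2\tau}\dot u_k e^{u_k^2}\,d\tau\bigr|=\bigl|\int_{B_{t_k}\setminus B_{s_k^{(l)}}}u_k\dot u_k e^{u_k^2}\,dx\bigr|\le\bigl(\lambda_k^{-1}\!\int\dot u_k^2 e^{u_k^2}\bigr)^{1/2}\bigl(\lambda_k\!\int u_k^2 e^{u_k^2}\bigr)^{1/2}=o(1)$, and arrives at
\[
   N_k(s_k^{(l)},t_k)=2\pi\Bigl(v_k(\beta_k)\phi_k(\beta_k)-v_k(\alpha_k)\phi_k(\alpha_k)+\int_{\alpha_k}^{\beta_k}\phi_k^2\,d\tau\Bigr)+o(1).
\]
The hypothesis $\sup_{(s_k^{(l)},t_k)}P_k\to0$ forces $u_k$ to be asymptotically harmonic wherever $v_k$ exceeds a fixed threshold $K$, so that on that part of the interval the three bracketed terms cancel to $o(1)$ --- here again one keeps factors of $u_k(t)$ inside the energy integrals, using that $u_k$ is radially decreasing; on the complementary part $\{v_k\le K\}$, an annulus shrinking to the blow-up point since $t_k\to0$, $u_k$ is uniformly bounded and $u_k\to u_\infty$ locally uniformly by Theorem \ref{thm4.1}, so its contribution to $N_k$ is $\le\int_{B_{t_k}}\lambda_\infty u_\infty^2 e^{u_\infty^2}\,dx+o(1)=o(1)$. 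Letting $K\to\infty$ gives $N_k(s_k^{(l)},t_k)\to0$.

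For part ii) we rescale at the scale $t_k$, setting $V_k(\tau)=v_k(\tau+\log t_k)$; then $\int_{\log(s_k^{(l)}/t_k)}^{0}P_k\to\nu_0>0$ with left endpoint tending to $-\infty$ (as $s_k^{(l)}/t_k\to0$), while $t_k\to0$ yields compactness of the rescaled flow for $|x|$ of order $t_k$. The dichotomy is whether a non-trivial concentration forms at scale $t_k$. If it does --- i.e.\ $u_k(t_k)\to\infty$ and $P_k$ stays bounded away from $0$ near $\tau=0$ along a subsequence --- a Pohozaev identity on the annulus $B_{t_k}\setminus B_{s_k^{(l)}}$, carried out exactly as in \cite{Struwe07} and using the decay estimate of Lemma \ref{lemma5.1} to control the inner boundary term, shows that the mass trapped up to scale $t_k$ is at least $\pi$, so $\nu_0\ge\pi$. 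If it does not, $V_k$ converges near $\tau=0$ to a bounded harmonic, hence affine, limit, which carries no energy; consequently the whole mass $\nu_0$ must be concentrated in a bounded $\tau$–window around $0$, which by \eqref{5.10} and the mean-value property of $P_k$ gives $\liminf_k P_k(t_k)\ge\nu_0$ and $\lim_{L\to\infty}\limsup_k N_k(s_k^{(l)},t_k/L)=0$, while continuing the emerging Liouville profile outward until the nonlinearity forces the density back down yields $\lim_{L\to\infty}\liminf_k N_k(s_k^{(l)},Lt_k)\ge\pi$. The step I expect to be the main obstacle is the $\pi$–lower bound in the first alternative: it is the only point where the sharp threshold of the Moser--Trudinger/Liouville geometry enters, and making the Pohozaev argument on the degenerating annulus rigorous --- in particular controlling the neck between the scales $s_k^{(l)}$ and $t_k$ --- is the delicate part; the remaining steps are the ODE bookkeeping sketched above.
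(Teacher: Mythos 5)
Your setup (log coordinates, the identity $N_k(s,t)=\int P_k\,d\tau$, the $o(1)$ bound on the $\dot u_k$ terms, and $\phi_k=o(1)$ at the inner radius) is fine, but the mechanism you propose for part i) has a genuine gap. Multiplying the equation by $u_k$ gives, as you say, $\tfrac1{2\pi}N_k=[v_k\phi_k]_{\alpha_k}^{\beta_k}+\int\phi_k^2\,d\tau+o(1)$ with $\phi_k=-v_k'$; but this is just the pointwise relation $\tfrac{P_k}{2\pi}=(v_k\phi_k)'+\phi_k^2$ integrated, and used directly it only returns $N_k\le \sup P_k\cdot\log(t_k/s_k^{(l)})+o(1)$, which is useless because the neck length is unbounded. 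The claimed cancellation of the three bracketed terms where $v_k\ge K$ ("asymptotically harmonic") is exactly the crux and is circular: the deviation of $v_k$ from an affine profile is driven by $\int P_k/(2\pi v_k)\,d\tau$, and multiplying back by $v_k$ reproduces essentially $N_k/2\pi$, the quantity to be bounded. Moreover you lean on "$u_k$ is radially decreasing", which is unjustified: $u_k=u(t_k)$ is a time slice of the flow, not a positive elliptic solution, and no monotonicity in $r$ is available — the paper's estimates \eqref{5.11c}--\eqref{5.11e} exist precisely to get the flux bound $-2\pi r\,u_ku_k'\le N_k(s_k^{(l)},r)+o(1)$ without monotonicity, and it is this bound, inserted into the dilation (Pohozaev) identity \eqref{5.11}, that produces the quadratic inequality \eqref{5.12}, $2N_k\le P_k(t)+\pi^{-1}N_k^2+o(1)$, whose gap structure together with continuity in $t$ (starting from $N_k(s,s)=0$) proves part i). (Minor additional point: part i) does not assume $t_k\to0$, so your region $\{v_k\le K\}$ need not shrink to the blow-up point, and on a shrinking region the locally uniform convergence of Theorem \ref{thm4.1} would not apply anyway; that contribution has to be handled by a trivial bound or by noting $P_k\to0$ forces $u_\infty\equiv0$ there.)

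Part ii) is essentially asserted rather than proved, and with a logic that does not match what can actually be shown. In the paper the alternative $\nu_0\ge\pi$ is not obtained by exhibiting a concentration at scale $t_k$: one assumes $\nu_0<\pi$, and then \eqref{5.12} immediately gives $\liminf_kP_k(t_k)\ge\nu_0$ (this is \eqref{5.14}); the two remaining limits follow from the doubling estimate \eqref{5.10} and a dyadic summation — if the mass up to $Lt_k$ stayed below $\pi$, then $P_k\ge\nu_0+o(1)$ on every dyadic annulus, and summing over $L=2^m$ contradicts the finite energy \eqref{5.1}; the inner decay at $t_k/L$ is obtained the same way using \eqref{5.13}. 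In your sketch the inference "the mass must concentrate in a bounded $\tau$-window around $0$, which by \eqref{5.10} gives $\liminf_kP_k(t_k)\ge\nu_0$" does not follow: nothing prevents the mass from sitting in the middle of the neck, and \eqref{5.10} bounds $\inf P_k$ over a dyadic annulus \emph{from above} by the local mass — it gives no lower bound on $P_k$ at the specific radius $t_k$. Likewise "continuing the Liouville profile outward" is a heuristic, not an argument, and the Pohozaev-on-the-degenerating-annulus step you defer to \cite{Struwe07} is exactly the missing content (you flag it yourself as the main obstacle). So as it stands the proposal proves neither alternative of part ii); the quadratic inequality \eqref{5.12}, which your energy identity does not yield, is the ingredient you are missing in both parts.
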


\begin{proof}
{\it i)} For $s=s_k^{(l)} < t$ we integrate by parts to obtain 
\begin{equation} \label{5.11}
  \begin{split} 
   2 N_k(s,t) & = \int _{B_t \setminus B_s} e_k \;div\; x\; dx = 
   P_k(t) - P_k(s) - \int _{B_t \setminus B_s} x\cdot \nabla e_k \; dx \\
   & \le P_k(t)
   - 4 \pi \int_s^t \lambda_k r^2 u_k'(1 + u_k^2)u_k e^{u_k^2}dr.
     \end{split} 
\end{equation}

In order to further estimate the right hand side we observe that \eqref {1.1} 
for any $t < R$ yields the identity 
\begin{equation} \label{5.11a}
   - 2\pi t u_k(t) u_k'(t) = \int_{B_t} \lambda_k u_k(t) u_k e^{u_k^2} dx 
     - \int_{B_t} u_k(t) \dot{u}_k e^{u_k^2} dx.
\end{equation}
Estimating $u_k^2(t) e^{u_k^2} \le \max\{u_k^2(t) e^{u_k^2(t)},u_k^2 e^{u_k^2}\}$,
by Lemma \ref{lemma4.1}, \eqref{4.6}, and \eqref{5.1}, 
we can easily bound the contribution from the second integral 
\begin{equation} \label{5.11b}
  \begin{split} 
     & \big(\int_{B_t}u_k(t) |\dot{u}_k| e^{u_k^2} dx \big)^2
     \le \lambda_k \int_{B_t} u_k^2(t) e^{u_k^2}dx \cdot
     \lambda_k^{-1} \int_{B_t}\dot{u}_k^2 e^{u_k^2} dx\\
     & \quad \quad \le o(1) \big(\pi\lambda_k t^2 u_k^2(t) e^{u_k^2(t)} 
            + \lambda_k \int_{B_t} u_k^2 e^{u_k^2} dx \big) = o(1),
  \end{split} 
\end{equation}
where $o(1) \rightarrow 0$ as $k \rightarrow \infty$. From \eqref {5.11a} 
we then obtain that at any sequence of points $t=t_k$ where $u_k'(t) \ge 0$
there holds 
\begin{equation}\label{5.11c}
   \int_{B_t} \lambda_k u_k(t) u_k e^{u_k^2} dx = o(1).
\end{equation}
On the other hand, if for $t_{k0}=t_0 \le r \le t=t_k$ there holds
$u_k'(r) \le 0 = u_k'(t_0)$, by \eqref {5.11c} we can estimate
\begin{equation} \label{5.11d}
  \begin{split} 
   \int_{B_t} \lambda_k u_k(t) u_k e^{u_k^2} dx 
   & \le \int_{B_t\setminus B_{t_0}} \lambda_k u_k^2 e^{u_k^2} dx
   + \int_{B_{t_0}} \lambda_k u_k(t_0) u_k e^{u_k^2} dx \\
   & = N_k(t_0,t) + o(1).
  \end{split} 
\end{equation}
In view of \eqref {5.11b}-\eqref {5.11d} and \eqref {5.9}, 
for $s=s_k^{(l)} \le r \le t=t_k$ and with $r_k=r_k^{(l)}$ we then can estimate 
\begin{equation} \label{5.11e}
  \begin{split} 
   - 2\pi r u_k(r) u_k'(r) 
   & = \int_{B_r} \lambda_k u_k(r) u_k e^{u_k^2} dx + o(1) \\
   & \le N_k(s,r) + \int_{B_s} \lambda_k u_k(s) u_k e^{u_k^2} dx + o(1)\\
   & \le N_k(s,r) + N_k(Lr_k,s) + \frac{u_k(s)}{u_k(Lr_k)}\Lambda_k(Lr_k)+ o(1)\\
   & = N_k(s,r)+ o(1),
  \end{split}
\end{equation}
where $o(1) \rightarrow 0$ when first $k \rightarrow \infty$ and then 
$L \rightarrow \infty$. Indeed, the first inequality is clear when $u_k' \le 0$
in $[s,r]$, and otherwise follows from \eqref {5.11c}, \eqref {5.11d}. The second inequality
may be seen in a similar way. Recalling \eqref{5.11} we thus arrive at the estimate
\begin{equation} \label{5.12}
  \begin{split} 
   2 N_k(s,t) & \le P_k(t) + 2 \int_s^t \lambda_k r (1 + u_k^2)e^{u_k^2}N_k(s,r)dr+ o(1)\\
   & \le P_k(t) + \pi^{-1}N_k(s,t)^2 + o(1).
     \end{split} 
\end{equation}

If we now assume that
\begin{equation*}
   \sup_{s<t<t_k} P_k(t) \rightarrow 0 \hbox{ as } k \rightarrow \infty,
\end{equation*}
upon letting $t$ increase from $t=s=s_k^{(l)}$ to $t_k$ we find
\begin{equation*}
   \lim_{k \rightarrow \infty}N_k(s_k^{(l)} ,t_k) = 0,
\end{equation*}
as claimed. 

ii) On the other hand, if we suppose that
for some $t_k > s_k^{(l)}$ we have
\begin{equation} \label{5.13}
   0 < \lim_{k \rightarrow \infty}N_k(s_k^{(l)} ,t_k) = \nu_0 < \pi,
\end{equation}
from \eqref{5.12} with error $o(1) \rightarrow 0$ as $k \rightarrow \infty$
we conclude that 
\begin{equation}\label{5.14}
    \nu_0 + o(1) \le (2 - \nu_0/\pi) N_k(s_k^{(l)} ,t_k) \le P_k(t_k) + o(1).
\end{equation}
It then also follows that 
\begin{equation*}
   \lim_{L \rightarrow \infty}\liminf_{k \rightarrow \infty}N_k(s_k^{(l)} ,Lt_k) 
   \ge \pi.
\end{equation*}
Otherwise, \eqref{5.10} and \eqref{5.14} for a subsequence $(u_k)$ 
yield the uniform bound 
\begin{equation*}
    C_0 \liminf_{k \rightarrow \infty}N_k(Lt_k/2 ,Lt_k) 
    \ge \liminf_{k \rightarrow \infty}\inf_{Lt_k/2\le t \le Lt_k}P_k(t) \ge \nu_0
\end{equation*}
for all $L \ge 2$. Choosing $L = 2^m$, where $m \in {\mathbb N}$, 
and summing over $1 \le m \le M$, we obtain
\begin{equation*}
    C_0 \liminf_{k \rightarrow \infty}\Lambda_k(2^M t_k) \ge 
    C_0 \liminf_{k \rightarrow \infty}N_k(t_k ,2^M t_k) \ge \nu_0 M  
    \rightarrow \infty \hbox{ as } M \rightarrow \infty,
\end{equation*}
contrary to assumption \eqref{5.1}. 
Upon replacing $t_k$ by $t_k/L$ in the previous argument and recalling our
assumption \eqref{5.13}, by the same reasoning we also arrive at the estimate
\begin{equation*}
    \lim_{L \rightarrow \infty}\liminf_{k \rightarrow \infty}N_k(s_k^{(l)} ,t_k/L) = 0.
\end{equation*}
This completes the proof.
\end{proof}

Suppose that for some $t_k > s_k^{(l)}$ with $t_k \rightarrow 0$ as 
$k \rightarrow \infty$ there holds
\begin{equation*}
   \liminf_{k \rightarrow \infty}N_k(s_k^{(l)} ,t_k) > 0.
\end{equation*}
Then we can find a subsequence $(u_k)$ and numbers 
$r_k^{(l+1)} \in ]s_k^{(l)},t_k[$ such that 
\begin{equation} \label{5.15}
   \lim_{k \rightarrow \infty}N_k(s_k^{(l)},r_k^{(l+1)}) =  \nu_0 > 0 .
\end{equation}
Replacing our original choice of $r_k^{(l+1)}$ by a smaller number, 
if necessary, we may assume that $\nu_0 < \pi$.
Lemma \ref{lemma5.2} then implies that
\begin{equation}\label{5.16}
   \lim_{L \rightarrow \infty}\liminf_{k \rightarrow \infty}
   N_k(s_k^{(l)},Lr_k^{(l+1)}) \ge \pi, \;
   \lim_{L \rightarrow \infty}\limsup_{k \rightarrow \infty}
   N_k(s_k^{(l)},r_k^{(l+1)}/L) = 0,
\end{equation}
and that 
\begin{equation} \label{5.17}
    \liminf_{k \rightarrow \infty} P_k(r_k^{(l+1)}) > 0.
\end{equation}
In particular, since $r_k^{(l+1)} \le t_k \rightarrow 0$ we then conclude that
$u_k(r_k^{(l+1)}) \rightarrow \infty$.

The desired precise quantization result at the scale $r_k^{(l+1)}$ is a
consequence of the following Proposition.

\begin{proposition} \label{prop5.1} 
There exist a subsequence $(u_k)$ such that
\begin{equation*}
  \eta^{(l+1)}_k(x) := 
  u_k(r_k^{(l+1)})(u_k(r_k^{(l+1)} x ) - u_k(r_k^{(l+1)})) 
  \rightarrow \eta(x)
\end{equation*}
locally uniformly on ${\mathbb R}^2\setminus \{0\}$ as $k \rightarrow \infty$, 
where $\eta(x) = \log(\frac{2}{1+|x|^2})$.
\end{proposition}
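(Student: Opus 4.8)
The plan is to repeat the blow‑up analysis of Theorem~\ref{thm4.1}, but centered at the new scale $\rho_k:=r_k^{(l+1)}$ and normalized on the sphere $|x|=1$ rather than at a maximum point. First I would set $v_k(x)=u_k(\rho_kx)$, $\dot v_k(x)=\dot u_k(\rho_kx)$, write $\eta_k:=\eta_k^{(l+1)}$, and recall that $v_k(1)=u_k(\rho_k)\to\infty$ (as noted right after~\eqref{5.17}) while $\rho_k\le t_k\to0$. Passing to a subsequence, \eqref{5.17} together with~\eqref{4.6} lets me assume that $P_k(\rho_k)=2\pi\lambda_k\rho_k^2v_k^2(1)e^{v_k^2(1)}\to P_\infty$ with $0<P_\infty<\infty$. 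Rescaling~\eqref{1.1} then gives, with $q_k=v_k/v_k(1)$ and $a_k=1+\eta_k/(2v_k^2(1))$, the equation
\[
  -\Delta\eta_k=I_k+II_k,\qquad
  I_k=\lambda_k\rho_k^2v_k(1)v_ke^{v_k^2}=\frac{P_k(\rho_k)}{2\pi}\,q_k\,e^{2a_k\eta_k}>0,\qquad
  II_k=-\rho_k^2v_k(1)\dot v_ke^{v_k^2},
\]
on $\Omega_k=\{x:\rho_kx\in\Omega\}$.

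The second step is uniform control of $\eta_k$ on each annulus $A=\{a\le|x|\le b\}$, $0<a<b<\infty$. For the upper bound I would compare~\eqref{4.6} (which in the radial case reads $\lambda_k|y|^2u_k^2(y)e^{u_k^2(y)}\le C$) with~\eqref{5.17} (which gives $\lambda_k\rho_k^2u_k^2(\rho_k)e^{u_k^2(\rho_k)}$ bounded below by a positive constant) at radii $|y|\in[a\rho_k,b\rho_k]$ and $|y|=\rho_k$; taking logarithms and using that $u_k$ is essentially radially nonincreasing yields $u_k^2(y)-u_k^2(\rho_k)\le C_A$, and dividing by $u_k(y)+u_k(\rho_k)\ge 2v_k(1)$ gives $\eta_k\le C_A/2$ on $A$. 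For the lower bound I would integrate~\eqref{1.1} in the form $-2\pi ru_k'(r)=\int_{B_r}(\lambda_ku_k-\dot u_k)e^{u_k^2}$: splitting the integral at $s_k^{(l)}$ and using $\Lambda_k(s_k^{(l)})\to l\Lambda_1$ with $u_k(s_k^{(l)})\to\infty$ for the inner part, $N_k(s_k^{(l)},r)\le\Lambda$ and $u_k(r)=(1+o(1))v_k(1)$ (by the upper bound) for the neck part, and Lemma~\ref{lemma4.1} with H\"older's inequality as in~\eqref{5.2} for the $\dot u_k$-term, one finds $|u_k(\rho_k)-u_k(r)|=O(v_k(1)^{-1})$ for $|y|=r\in[a\rho_k,b\rho_k]$, hence $\eta_k\ge-C_A$ on $A$. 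Thus $\eta_k$ is bounded on compact subsets of ${\mathbb R}^2\setminus\{0\}$.

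With these bounds the flow error is absorbed exactly as in~\eqref{5.2}: $\int_A|II_k|\,dx\le\sup_A\frac{u_k(\rho_k)}{u_k}\big(\lambda_k\int u_k^2e^{u_k^2}\cdot\lambda_k^{-1}\int\dot u_k^2e^{u_k^2}\big)^{1/2}=(1+o(1))o(1)\to0$ by Lemma~\ref{lemma4.1} and~\eqref{5.1}, while $I_k$ is bounded in $L^\infty_{loc}({\mathbb R}^2\setminus\{0\})$. Elliptic regularity for $-\Delta\eta_k=I_k+II_k$ then gives, along a further subsequence, $\eta_k\to\eta_\infty$ in $H^2_{loc}$ and locally uniformly on ${\mathbb R}^2\setminus\{0\}$; consequently $q_k,a_k\to1$ locally uniformly, $I_k\to\frac{P_\infty}{2\pi}e^{2\eta_\infty}$, and $\eta_\infty$ is a radial solution of $-\Delta\eta_\infty=\frac{P_\infty}{2\pi}e^{2\eta_\infty}$ on ${\mathbb R}^2\setminus\{0\}$ with $\eta_\infty(1)=0$ and, by~\eqref{5.1} and Fatou's lemma, $\int_{{\mathbb R}^2}e^{2\eta_\infty}<\infty$.

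The remaining — and main — point is to identify $\eta_\infty$ with $\eta(x)=\log\big(\frac{2}{1+|x|^2}\big)$, and this is where the neck estimates~\eqref{5.16} enter. The relation $\lim_{L\to\infty}\limsup_{k\to\infty}N_k(s_k^{(l)},\rho_k/L)=0$ says that no energy is concentrated on scales just below $\rho_k$; transported through the scaling it forces $\int_{B_\delta}e^{2\eta_\infty}\to0$ as $\delta\to0$ and, more precisely, excludes a conical (singular) Liouville profile at the origin, so that $\eta_\infty$ extends to a radial solution of the same equation on all of ${\mathbb R}^2$ with finite mass. By the classification of Chen--Li~\cite{Chen-Li} it is then a standard spherical bubble; the normalization $\eta_\infty(1)=0$ together with~\eqref{5.17} and the first relation in~\eqref{5.16} (which pin down how much of the mass $\frac{P_\infty}{2\pi}\int e^{2\eta_\infty}$ sits on each side of the scale $\rho_k$) fixes the scaling parameter and the multiplier, giving $\frac{P_\infty}{2\pi}=1$ and $\eta_\infty=\eta$. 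I expect this last step — in particular excluding the singular profiles at the origin, which is exactly the content of the "no downward leakage" part of~\eqref{5.16}, and then evaluating the multiplicative constant — to be the chief obstacle; the rescaling, the a priori bounds on the annuli, and the passage to the limit are direct adaptations of Section~4 and of the corresponding argument in~\cite{Struwe07}.
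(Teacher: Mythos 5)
Your overall skeleton --- rescale at $r_k^{(l+1)}$, kill the $\dot u_k$-term by Lemma \ref{lemma4.1}, pass to a limiting Liouville equation away from the origin, remove the singularity at $0$, and invoke the Chen--Li classification with radial symmetry and $\eta_k(1)=0$ --- is the paper's, and your upper bound for $\eta_k$ on annuli via \eqref{4.6} against \eqref{5.17} is fine. The genuine gap is in the lower bound and, equivalently, in the control of $\Delta\eta_k$ near the origin. After integrating the radial equation you need $\int_{B_s}\lambda_k u_k(r_k^{(l+1)})\,u_k e^{u_k^2}dx\le C$ for $s\sim r_k^{(l+1)}$, and smallness of this quantity for $s=r_k^{(l+1)}/L$. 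Your justification rests on ``$u_k(r)=(1+o(1))v_k(1)$ on the neck part,'' which does not follow from the upper bound you proved (that bound is one-sided and only on the fixed annulus $a r_k^{(l+1)}\le |x|\le b r_k^{(l+1)}$), and it says nothing about the ball $B_{s_k^{(l)}}$, which carries the non-small energy $\Lambda_k(s_k^{(l)})\to l\Lambda_1$ of the earlier bubbles; there the weight ratio $u_k(r_k^{(l+1)})/u_k$ is not pointwise controlled, and neither $u_k(r_k^{(l+1)})\lesssim u_k(s_k^{(l)})$ nor a lower bound for $u_k$ on the neck is known a priori. For the same reason the second relation in \eqref{5.16} (``no downward leakage'') only controls the unweighted neck energy $N_k(s_k^{(l)},r_k^{(l+1)}/L)$ and cannot by itself exclude a Dirac mass of $-\Delta\eta_\infty$ at the origin: the dangerous contribution to $\int_{B_{r_k^{(l+1)}/L}}|\Delta\eta_k|\,dx$ comes from inside $B_{s_k^{(l)}}$. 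The paper supplies exactly this missing ingredient through the sign-of-$u_k'$ chaining \eqref{5.11a}--\eqref{5.11e} combined with \eqref{5.9} (giving $u_k(s_k^{(l)})/u_k(Lr_k^{(l)})\to 0$), which yields \eqref{5.31}; and it obtains locally uniform convergence of $\eta_k$ not by two-sided pointwise bounds but via Lemma \ref{lemma5.5} (the weak $H^1$-limit of $v_k-v_k(1)$ is an entire harmonic function with finite Dirichlet energy, hence zero) together with Lemma \ref{lemma5.6} (harmonic decomposition plus Harnack, with $\eta_k(1)=0$ ruling out $h_k\to-\infty$), using \eqref{5.26}, \eqref{5.28} and \eqref{5.31}. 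Without an estimate of the type \eqref{5.31} your compactness and singularity-removal steps do not close.

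On the final identification, the mechanism you propose cannot work: \eqref{5.16} and \eqref{5.17} only see scale-invariant information (the renormalized total mass, which equals $4\pi$ for every radial member of the Chen--Li family vanishing at $|x|=1$), so they cannot fix the multiplier $P_\infty/2\pi$ and the scaling parameter separately. The paper concludes at this point directly from the Chen--Li classification together with radial symmetry and $\eta_\infty(1)=\eta_k(1)=0$, and in the subsequent quantization step \eqref{5.18} what is actually used is only the total renormalized mass $p_\infty\int_{{\mathbb R}^2}e^{2\eta_\infty}dx=4\pi$, which is common to all members of the family; so the step you flag as the chief obstacle is not resolved by the neck relations you cite, and you should argue it as the paper does (or observe that only the total mass is needed downstream).
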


Postponing the details of the proof of Proposition \ref{prop5.1} to the next section,
we now complete the proof of Theorem \ref{thm5.1}. 

Denote as $v_k^{(l+1)}(x) = u_k(r_k^{(l+1)} x)$, 
$\dot{v}_k^{(l+1)}(x) = \dot{u}_k(r_k^{(l+1)}x)$ 
the scaled functions $u_k$ and $\dot{u}_k$, respectively. Omitting the
superscript $(l+1)$ for brevity, similar to the proof of Theorem \ref{thm4.1}
for $\eta_k:=\eta_k^{(l+1)}$ we have
\begin{equation*}
   -  \Delta \eta_k = \lambda_k r_k^2 v_k(1) v_k e^{v_k^2} 
   - r_k^2 \dot{v}_k v_k(1) e^{v_k^2} =:I_k + II_k,
\end{equation*}
where $II_k \rightarrow 0$ in $L^2_{loc}({\mathbb R}^2\setminus \{0\})$ 
as $k \rightarrow \infty$.
Moreover, letting $\rho_k=\rho^{(l+1)}_k := \frac{v_k}{v_k(1)}$, 
$a_k=a^{(l+1)}_k = 1 + \frac{\eta_k}{2v^2_k(1)}$, by 
Proposition \ref{prop5.1} we have 
$a_k \rightarrow  1$, $\rho_k \rightarrow 1$ as $k \rightarrow \infty$
locally uniformly away from $x=0$, and
\begin{equation*}
   I_k = \lambda_k r_k^2 v_k(1) v_k e^{v_k^2} 
         = \lambda_k r_k^2 v^2_k(1) e^{v_k^2(1)}\rho_k e^{v_k^2-v_k^2(1)}
         = (2\pi)^{-1}P_k(r_k)\rho_k e^{2a_k \eta_k}.
\end{equation*}

Now observe that $\eta$ solves equation \eqref{4.4} on ${\mathbb R}^2$ with 
\begin{equation*}
  \int_{{\mathbb R}^2}  e^{2\eta} dx = 4\pi = \Lambda_1.
\end{equation*}
We therefore conclude that $P_k(r_k) \rightarrow 2\pi$ and 
\begin{equation} \label{5.18}
 \begin{split}
   \lim_{L \rightarrow \infty} & \lim_{k \rightarrow \infty}
   N_k(r_k^{(l+1)}/L,Lr_k^{(l+1)})
   =  \lim_{L \rightarrow \infty} \lim_{k \rightarrow \infty}
   \int_{B_L\setminus B_{1/L}}\lambda_k r_k^2 v^2_k e^{v_k^2}dx\\
   = & \lim_{L \rightarrow \infty} \lim_{k \rightarrow \infty}
   \int_{B_L\setminus B_{1/L}}(2\pi)^{-1}P_k(r_k)\rho_k^2 e^{2a_k \eta_k}dx
   = \lim_{L \rightarrow \infty} \int_{B_L\setminus B_{1/L}}e^{2\eta}dx
   = \Lambda_1 .
  \end{split}
\end{equation}

From \eqref{5.16} then we obtain that 
\begin{equation*}
  \begin{split}
   \lim_{L \rightarrow \infty} & \lim_{k \rightarrow \infty} N_k(s_k^{(l)},Lr_k^{(l+1)})\\ 
   & = \lim_{L \rightarrow \infty} \lim_{k \rightarrow \infty}
   (N_k(s_k^{(l)},r_k^{(l+1)}/L) + N_k(r_k^{(l+1)}/L,Lr_k^{(l+1)})) 
   = \Lambda_1,
  \end{split}
\end{equation*}
and our induction hypothesis \eqref{5.8} yields
\begin{equation} \label{5.19}
   \lim_{L \rightarrow \infty} \lim_{k \rightarrow \infty} \Lambda_k(Lr_k^{(l+1)}) 
   = \lim_{L \rightarrow \infty} \lim_{k \rightarrow \infty} 
   (\Lambda_k(s_k^{(l)}) + N_k(s_k^{(l)},Lr_k^{(l+1)})) = (l+1) \Lambda_1.
\end{equation}

Moreover, $r_k^{(l+1)}/s_k^{(l)} \rightarrow \infty$ as $k \rightarrow \infty$.
Indeed, if we assume that $r_k^{(l+1)} \le Ls_k^{(l)}$ for some $L$ 
by Proposition \ref{prop5.1} we have
$N_k(s_k^{(l)}/2,s_k^{(l)}) \ge \nu_0$ for some constant $\nu_0 = \nu_0(L) > 0$,
contradicting \eqref{5.9}. 

In order to obtain decay analogous to Lemma \ref{lemma5.1} and then also 
the analogue of \eqref{5.9} at the scale $r_k^{(l+1)}$, denote as 
\begin{equation*}
      w_k^{(l+1)}(x) = u_k(r_k^{(l+1)})(u_k(x) - u_k(r_k^{(l+1)}))
\end{equation*}
the unscaled function $\eta_k^{(l+1)}$, satisfying the equation
\begin{equation} \label{5.20}
  \begin{split}
   -\Delta w_k^{(l+1)} & = \lambda_k u_k(r_k^{(l+1)}) u_k e^{u_k^2}
   - u_k(r_k^{(l+1)}) \dot{u}_k e^{u_k^2} 
   =: f_k^{(l+1)} - d_k^{(l+1)}
  \end{split}
\end{equation}
in $\Omega = B_R$. We then have the analogue of Lemma \ref{lemma5.1},
which may be proved in the same fashion.

\begin{lemma}\label{lemma5.3}
For any $\varepsilon >0$, letting $T_k =T_k^{(l+1)}> r_k^{(l+1)}$ be minimal
such that $u_k(T_k) = \varepsilon u_k(r_k^{(l+1)})$, for any constant $b < 2$
and sufficiently large $k$ and $L$ there holds 
\begin{equation*}
   w_k^{(l+1)}(r) \le b \log\left(\frac{r_k^{(l+1)}}{r}\right)
   \hbox{ on } B_{T_k} \setminus B_{L r_k^{(l+1)}}
\end{equation*}
and we have
\begin{equation*}
   \lim_{k \rightarrow \infty}N_k(s_k^{(l)},T_k) = \Lambda_1.
\end{equation*}
\end{lemma}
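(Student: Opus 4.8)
The plan is to mimic the proof of Lemma \ref{lemma5.1} almost verbatim, with $r_k^{(l+1)}$ playing the role of $r_k$, $s_k^{(l)}$ playing the role of the lower cut-off scale $Lr_k$ (in the sense that we already have good control below $s_k^{(l)}$ by the induction hypothesis), and $w_k^{(l+1)}$, $f_k^{(l+1)}$, $d_k^{(l+1)}$ from \eqref{5.20} replacing their analogues. First I would record, as in the proof of Lemma \ref{lemma5.1}, that $T_k \to 0$ as $k \to \infty$: this follows from the locally uniform convergence $u_k \to u_\infty$ away from $0$ together with $u_k(r_k^{(l+1)}) \to \infty$, which was established just before Proposition \ref{prop5.1}. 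Next, on the annulus $L r_k^{(l+1)} \le t \le T_k$ we have $u_k(t) \ge \varepsilon u_k(r_k^{(l+1)})$, so the term $d_k^{(l+1)}$ can be estimated in $L^1$ exactly as in \eqref{5.2}, using H\"older's inequality, Lemma \ref{lemma4.1}, \eqref{4.6} and \eqref{5.1}; hence $d_k^{(l+1)} \to 0$ in $L^1$ of the relevant region.

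The core step is the ODE argument for $w_k^{(l+1)}$. Integrating $-\Delta w_k^{(l+1)} = f_k^{(l+1)} - d_k^{(l+1)}$ over $B_t$ gives, as in \eqref{5.4},
\begin{equation*}
  2\pi t \,(w_k^{(l+1)})'(t) = -\int_{B_t} f_k^{(l+1)}\,dx + o(1).
\end{equation*}
By \eqref{5.18} and \eqref{5.19} (or rather their precursors, Proposition \ref{prop5.1} and the identity $P_k(r_k^{(l+1)}) \to 2\pi$) we know that $\int_{B_t} f_k^{(l+1)}\,dx$ is, up to $o(1)$ as first $k \to \infty$ then $L \to \infty$, at least $\Lambda_1 = 4\pi$ for $L r_k^{(l+1)} \le t \le T_k$, since $f_k^{(l+1)} \ge (1+o(1))e_k$ and $N_k(r_k^{(l+1)}/L, L r_k^{(l+1)}) \to \Lambda_1$. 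Combined with the induction hypothesis \eqref{5.9} that the mass between $s_k^{(l)}$ and $L r_k^{(l+1)}$ accounts correctly, this yields $(w_k^{(l+1)})'(t) \le -b/t$ for any $b < 2$, $L \ge L(b)$, $k \ge k_0(L)$. Integrating from $L r_k^{(l+1)}$ to $r$ and using that Proposition \ref{prop5.1} gives $\eta_k^{(l+1)}(L) \le \eta(L) + o(1) < -b\log L + C$ (absorbing the harmless $\log 2$), one obtains, just as in \eqref{5.5},
\begin{equation*}
  w_k^{(l+1)}(r) \le b\log\!\left(\frac{r_k^{(l+1)}}{r}\right) \quad\text{on } B_{T_k}\setminus B_{L r_k^{(l+1)}},
\end{equation*}
which is the first assertion.

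Finally, to identify the mass, I insert this decay bound into the definition of $f_k^{(l+1)}$. Writing $f_k^{(l+1)} = \lambda_k(u_k^2(r_k^{(l+1)}) + w_k^{(l+1)})e^{u_k^2(r_k^{(l+1)})}e^{2(1 + w_k^{(l+1)}/(2u_k^2(r_k^{(l+1)})))w_k^{(l+1)}}$ and using \eqref{4.2}-type normalization $\lambda_k (r_k^{(l+1)})^2 u_k^2(r_k^{(l+1)}) e^{u_k^2(r_k^{(l+1)})} = (2\pi)^{-1}P_k(r_k^{(l+1)}) \to 1$, the same computation as at the end of the proof of Lemma \ref{lemma5.1} gives $f_k^{(l+1)} \le C (r_k^{(l+1)})^{-2}(r_k^{(l+1)}/r)^{2+\varepsilon}$ on the annulus, so that
\begin{equation*}
  \int_{B_{T_k}\setminus B_{L r_k^{(l+1)}}} f_k^{(l+1)}\,dx \le C\varepsilon^{-1}L^{-\varepsilon} \le \varepsilon
\end{equation*}
for $L$ large. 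Since $f_k^{(l+1)} \ge (1+o(1)) e_k$ and, by \eqref{5.18}, the mass $N_k(r_k^{(l+1)}/L, L r_k^{(l+1)}) \to \Lambda_1$, while by \eqref{5.16} the mass $N_k(s_k^{(l)}, r_k^{(l+1)}/L) \to 0$, we get that all the $e_k$-mass between $s_k^{(l)}$ and $T_k$ equals $\Lambda_1$ up to arbitrarily small error; letting $\varepsilon \to 0$ yields $\lim_{k\to\infty} N_k(s_k^{(l)}, T_k) = \Lambda_1$. The main obstacle, as in the radial estimate of Section 5, is making the order of limits ($k \to \infty$ first, then $L \to \infty$, then $\varepsilon \to 0$) genuinely consistent across the three ingredients—the decay bound, the lower mass bound from Proposition \ref{prop5.1}, and the induction hypothesis \eqref{5.9}—and checking that the error term $d_k^{(l+1)}$ really is $o(1)$ uniformly on the whole growing annulus rather than just on fixed balls; but both of these are handled exactly as in the proof of Lemma \ref{lemma5.1}, which is why the statement is claimed to "be proved in the same fashion."
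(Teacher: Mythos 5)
Your proposal is correct and follows essentially the same route as the paper: decay of $d_k^{(l+1)}$ from the bound $u_k\ge\varepsilon u_k(r_k^{(l+1)})$ together with the \eqref{5.11b}-type estimate, the differential inequality $2\pi t\,(w_k^{(l+1)})'(t)\le-\Lambda_1+o(1)$ via Proposition \ref{prop5.1} and \eqref{5.18}, integration from $Lr_k^{(l+1)}$ (which is exactly why the bound is only claimed on $B_{T_k}\setminus B_{Lr_k^{(l+1)}}$), and insertion of the decay into $f_k^{(l+1)}$ normalized by $(2\pi)^{-1}P_k(r_k^{(l+1)})$ to control the tail, then combining with \eqref{5.16} and \eqref{5.18}. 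The only cosmetic difference is that the paper bounds $e_k$ directly rather than $f_k^{(l+1)}$, which is equivalent since $e_k\le f_k^{(l+1)}$ on the annulus where $u_k\le u_k(r_k^{(l+1)})$.
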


\begin{proof}
Denote $w_k^{(l+1)}=w_k$, $r_k^{(l+1)}=r_k$, $d_k^{(l+1)}=d_k$ for simplicity. 
Coupled with the uniform bound $u_k(t) \ge \varepsilon u_k(r_k)$ for 
$r_k \le t \le T_k$, the estimate \eqref{5.11b} yields decay of 
$\int_{B_{T_k}}|d_k| dx$. Thus, for $L r_k \le t=t_k \le T_k$ 
from \eqref{5.18} and Proposition \ref{prop5.1} we have 
\begin{equation}\label{5.21}
  \begin{split} 
     2\pi t w_k'(t)
     & = \int_{\partial B_t}\partial_{\nu} w_k  \; do
     = \int_{B_t}\Delta w_k  \; dx 
        \le - \int_{B_{L r_k}} \frac{u_k(r_k)}{u_k} e_k  \; dx + o(1)\\
     & \le - N_k(r_k/L,Lr_k) + o(1)
        \le - \Lambda_1 + o(1),
  \end{split}
\end{equation}
with error $o(1) \rightarrow 0$ uniformly in $t$, if first $k \rightarrow \infty$ and 
then $L \rightarrow \infty$. For any $b < 2$ and sufficiently large $L \ge L(b)$
for $k \ge k_0(L)$, we thus obtain that 
\begin{equation*}
   w_k'(t) \le - \frac{b}{t} 
\end{equation*}
for all $L r_k \le t \le T_k$.
For such $t$ it then follows that 
\begin{equation*}
 \begin{split}
  e_k & \le \lambda_k u_k^2(r_k) e^{u_k^2(r_k)} 
            e^{2(1 + \frac{w_k}{2u_k^2(r_k)}) w_k}\\
  & \le (2\pi)^{-1}P(r_k) r_k^{-2} e^{(1 + \varepsilon) w_k}
  \le C r_k^{-2}\left(\frac{r_k}{r}\right)^{(1 + \varepsilon)b},
 \end{split}
\end{equation*}
and the proof may be completed as in Lemma \ref{lemma5.1}.
\end{proof}

For suitable numbers $s_k^{(l+1)}=T_k^{(l+1)}(\varepsilon_k)$,
where $\varepsilon_k \downarrow 0$ is chosen such that 
$u_k(s_k^{(l+1)}) = \varepsilon_k u_k(r_k^{(l+1)})\rightarrow \infty$
as $k \rightarrow \infty$, then we have 
\begin{equation}\label{5.22}
   \lim_{k \rightarrow \infty} \Lambda_k(s_k^{(l+1)}) = (l+1) \Lambda_1
\end{equation}
and
\begin{equation}\label{5.23}
 \begin{split}
   \lim_{L \rightarrow \infty} & \lim_{k \rightarrow \infty}
     (\Lambda_k(s_k^{(l+1)}) - \Lambda_k(Lr_k^{(l+1)}))\\
   & = \lim_{k \rightarrow \infty}\frac{r_k^{(l+1)}}{s_k^{(l+1)}} 
   = \lim_{k \rightarrow \infty}\frac{u_k(s_k^{(l+1)})}{u_k(r_k^{(l+1)})} 
   = \lim_{k \rightarrow \infty}s_k^{(l+1)}=0,
 \end{split}
\end{equation}
completing the induction step. In view of \eqref{5.1} and Lemma \ref{lemma5.2}
the iteration must terminate 
after finitely many steps $1 \le l \le l_*$, after which  
\begin{equation*}
    N_k(s_k^{(l_*)},t_k)  \rightarrow 0 \hbox{ as } k \rightarrow \infty
\end{equation*} 
for any sequence $t_k \rightarrow 0$ as $k \rightarrow \infty$
This concludes the proof of Theorem \ref{thm5.1} in the radial case.

\subsection{Proof of Proposition \ref{prop5.1}}
Throughout this section we let $r_k = r_k^{(l+1)}$, etc.,
and we set $r_k^- = r_k^{(l)}$, $s_k^- = s_k^{(l)}$.
Again denote as $v_k(x) = u_k(r_kx)$, $\dot{v}_k(x) = \dot{u}_k(r_kx)$ the scaled 
functions $u_k$, $\dot{u}_k$, respectively. 
As usual we write $v_k(x) = v_k(r)$ for $r = |x|$. Recall that \eqref{5.17}
implies that $v_k(1) = u_k(r_k) \rightarrow \infty$.

\begin{lemma}\label{lemma5.5}
As $k \rightarrow \infty$ we have $v_k(x) - v_k(1) \rightarrow 0$ locally 
uniformly on ${\mathbb R}^2 \setminus \{0\}$.
\end{lemma}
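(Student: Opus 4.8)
The plan is to show that the rescaled functions $v_k$ become essentially constant on compact subsets of ${\mathbb R}^2\setminus\{0\}$ by combining the two-sided information we already have: the quantization $N_k(r_k/L,Lr_k)\to\Lambda_1$ from \eqref{5.18} (which controls the nonlinearity near $r=1$) together with the upper bounds on $P_k$ from \eqref{4.6} and the decay of the energy $N_k$ outside the blow-up scale. Write $w_k = u_k(r_k)(u_k - u_k(r_k))$ for the unscaled profile, so that $\eta_k(x) = w_k(r_k x)$, and recall from \eqref{5.20} that
\begin{equation*}
  -\Delta w_k = f_k - d_k, \quad f_k = \lambda_k u_k(r_k) u_k e^{u_k^2}, \quad d_k = u_k(r_k)\dot u_k e^{u_k^2},
\end{equation*}
with $d_k\to 0$ in $L^1$ on balls of radius comparable to $r_k$ (by the Cauchy--Schwarz argument of \eqref{5.2}, \eqref{5.11b} and Lemma \ref{lemma4.1}). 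After rescaling, $\tilde\eta_k(x):=\eta_k(x)$ satisfies $-\Delta\tilde\eta_k = \tilde I_k + \tilde{II}_k$ with $\tilde{II}_k\to 0$ in $L^2_{loc}({\mathbb R}^2\setminus\{0\})$ and $\tilde I_k = (2\pi)^{-1}P_k(r_k)\rho_k e^{2a_k\eta_k}\ge 0$, whose $L^1$ mass on annuli $B_L\setminus B_{1/L}$ is uniformly bounded by \eqref{5.1}.

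First I would establish a uniform upper bound $\eta_k \le C(L)$, equivalently $v_k \le v_k(1) + C/v_k(1)$, on each annulus $B_L\setminus B_{1/L}$. This follows from the pointwise bound \eqref{4.6}: at any point $x$ with $|x|\in[1/L,L]$ we have $\lambda_k (r_k|x|)^2 u_k^2(r_kx) e^{u_k^2(r_kx)}\le C$, hence $\lambda_k r_k^2 u_k^2(r_kx)e^{u_k^2(r_kx)}\le CL^2$, while $\lambda_k r_k^2 u_k^2(1)e^{u_k^2(1)} = (2\pi)^{-1}P_k(r_k)\to 1$; taking the ratio and logarithms yields $u_k^2(r_kx) - u_k^2(1)\le 2\log(CL^2\cdot 2\pi) + o(1)$, i.e. $2\eta_k(x) + (\text{nonneg. square term})\le C(L)$, so $\eta_k\le C(L)$ as desired. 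Together with the bound on $\|\tilde I_k\|_{L^1}$, the right-hand side of the equation for $\eta_k$ is bounded in $L^1_{loc}$ and the positive part $\eta_k^+$ is bounded in $L^\infty_{loc}$ away from the origin.

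Next I would run a Harnack / potential-estimate argument. Split $\eta_k = h_k + p_k$ on a fixed annulus $A = B_L\setminus B_{1/L}$, where $p_k$ solves $-\Delta p_k = \tilde I_k + \tilde{II}_k$ with zero boundary data on a slightly larger annulus and $h_k$ is harmonic. Since $\tilde I_k + \tilde{II}_k$ is bounded in $L^1$, $p_k$ is bounded in $W^{1,q}$ for every $q<2$, hence (after a subsequence) $p_k\to p$ in $L^s$ for all $s<\infty$ and a.e. The harmonic part $h_k$ is bounded above (because $\eta_k$ is bounded above and $p_k\ge -C$ by the sign of the dominant term $\tilde I_k\ge0$ after handling the small $L^2$ error $\tilde{II}_k$); a harmonic function bounded above on an annulus, together with a uniform bound on its value somewhere — supplied by Proposition \ref{prop5.1}'s companion fact that $\eta_k^{(l+1)}$ subconverges, or more elementarily by the fact that $\int_A \eta_k\,dx$ is bounded below via the already-known $L^1$ convergence of $\tilde I_k$ and the equation — is bounded in $C^0_{loc}$. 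Thus $\eta_k$ is bounded in $C^0_{loc}({\mathbb R}^2\setminus\{0\})$, and by elliptic $L^p$ theory also in $W^{2,q}_{loc}$ for $q<2$, hence in $C^{0,\alpha}_{loc}$; a subsequence converges locally uniformly to some $\eta_\infty$ solving $-\Delta\eta_\infty = c\,e^{2\eta_\infty}$ on ${\mathbb R}^2\setminus\{0\}$ with $\int e^{2\eta_\infty}<\infty$.

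Finally, to pin down $v_k(x)-v_k(1)\to 0$ rather than merely $\eta_k$ bounded, I would use that $v_k(1)=u_k(r_k)\to\infty$ (from \eqref{5.17}) together with $\eta_k = v_k(1)(v_k - v_k(1))$ bounded locally uniformly: dividing by $v_k(1)\to\infty$ gives $v_k - v_k(1)\to 0$ locally uniformly on ${\mathbb R}^2\setminus\{0\}$, which is exactly the claim. The main obstacle I anticipate is the lower bound on the harmonic part $h_k$ (equivalently, ruling out $\eta_k\to-\infty$ on part of the annulus while the total mass $\int \tilde I_k$ stays bounded): this is where one must use \eqref{5.16}–\eqref{5.17}, namely that a definite amount of mass $\ge\pi$ concentrates at scale $r_k$, to prevent the profile from collapsing, and the argument parallels the non-degeneracy step $\eta_k(0)=0$ used in the proof of Theorem \ref{thm4.1}, here transplanted to the punctured plane using the circle $|x|=1$ where $\eta_k$ vanishes by construction.
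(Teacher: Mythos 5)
Your route inverts the paper's: the paper works directly with $\tilde v_k = v_k - v_k(1)$, observes that its equation has right-hand side tending to zero locally away from the origin (since $g_k=\lambda_k r_k^2 v_ke^{v_k^2}$ is either $\le r_k$ or equal to $(2\pi)^{-1}|x|^{-2}P_k(r_k|x|)/u_k(r_kx)$ with $u_k(r_kx)\rightarrow\infty$, by \eqref{4.6}), and then concludes from the uniform Dirichlet bound, the vanishing capacity of the puncture, the normalization $\tilde v_k(1)=0$ and radial symmetry that $\tilde v_k\rightarrow 0$ locally uniformly; no Harnack argument and no upper bound on $\eta_k$ are needed. You instead try to establish local boundedness of the amplified profile $\eta_k=v_k(1)(v_k-v_k(1))$ first -- essentially the content of Lemma \ref{lemma5.6} and Proposition \ref{prop5.1} -- and then divide by $v_k(1)\rightarrow\infty$, which is a legitimate final step. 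Your upper bound $\eta_k\le C(L)$ on annuli is correct, except that at this stage only $0<p_0\le(2\pi)^{-1}P_k(r_k)\le C$ is known (from \eqref{5.17} and \eqref{4.6}), not $P_k(r_k)\rightarrow 2\pi$; the weaker bound suffices.

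There are, however, genuine gaps in the crucial non-degeneracy step and in one quoted estimate. First, anchoring the harmonic part via ``Proposition \ref{prop5.1}'s companion fact that $\eta_k^{(l+1)}$ subconverges'' is circular: Lemma \ref{lemma5.5} is an ingredient in the proofs of Lemma \ref{lemma5.6} and Proposition \ref{prop5.1}. Second, the alternative you state -- that $\int_A\eta_k\,dx$ is bounded below ``via the $L^1$ convergence of $\tilde I_k$ and the equation'' -- does not follow: a bounded (even vanishing) $L^1$ mass of $I_k$ on the annulus is perfectly compatible with $\eta_k\rightarrow-\infty$ there. What does work is precisely the idea of your closing sentence: by \eqref{5.16} a fixed amount of mass $N_k(r_k/L,Lr_k)\ge\pi-o(1)$ sits at scale $r_k$, while writing $\lambda_k r_k^2v_k^2e^{v_k^2}=(2\pi)^{-1}P_k(r_k)\,\hat v_k^2 e^{(1+\hat v_k)\eta_k}$ with $\hat v_k\le 1+o(1)$ (from your upper bound) shows that uniform divergence $\eta_k\rightarrow-\infty$ on the annulus would make this mass vanish; alternatively one can anchor at the circle $|x|=1$, where $\eta_k=0$ by radial symmetry, but then one needs pointwise control of the potential part $p_k$ there, which requires either the pointwise bound \eqref{5.28} on $I_k$ (giving $L^2$, hence $H^2\subset C^0$, control) or radial symmetry, since $W^{1,q}$, $q<2$, gives no pointwise information in two dimensions. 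Third, you cannot quote $II_k\rightarrow 0$ in $L^2_{loc}$ from \eqref{5.26}: that estimate is derived in the paper using Lemma \ref{lemma5.5} (through the bound $v_k(1)/v_k\le 2$); instead you must argue independently, e.g.\ via $\lambda_k r_k^2v_k^2(1)e^{v_k^2}\le\max\{\lambda_k r_k^2v_k^2(1)e^{v_k^2(1)},\,\lambda_k r_k^2v_k^2e^{v_k^2}\}\le C(L)$ by \eqref{4.6}, combined with Lemma \ref{lemma4.1}, in the spirit of \eqref{5.25}. With these repairs your argument can be made to work, but it essentially re-proves Lemma \ref{lemma5.6} before Lemma \ref{lemma5.5}, whereas the paper's order lets Lemma \ref{lemma5.5} be settled by a much softer weak-convergence and Liouville-type argument.
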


\begin{proof} The function $\tilde{v}_k(x) = v_k(x) - v_k(1)$ satisfies the equation 
\begin{equation*}
    -\Delta \tilde{v}_k = g_k - l_k,
\end{equation*}
where $g_k = \lambda_k r_k^2 v_k e^{v_k^2} $ and with $l_k = r_k^2 \dot{v}_k e^{v_k^2}$.

We claim that $g_k \rightarrow 0$ locally uniformly away from $0$.
Indeed, since $r_k \rightarrow 0$, for any $x$ where $g_k(x) \ge r_k$ we have 
$v_k(x)=u_k(r_kx) \ge \gamma_k$ with constants 
$\gamma_k \rightarrow \infty$ independent of $x$. Hence for any $L > 0$ and any 
$1/L \le |x| \le L$ we either can bound $g_k(x) \le r_k\rightarrow 0$, or
\begin{equation*}
  \begin{split}
       g_k(x) & = \lambda_k r_k^2 v_k(x) e^{v_k^2(x)} 
        = \lambda_k r_k^2 u_k(r_kx) e^{u_k^2(r_kx)} \\
        & = (2\pi)^{-1}|x|^{-2} P_k(r_k|x|)/u_k(r_kx) \le CL^2\gamma_k^{-1}
        \rightarrow 0
  \end{split}
\end{equation*}
as $k \rightarrow \infty$. 
Moreover, \eqref{4.6} and Lemma \ref{lemma4.1} imply
\begin{equation}\label{5.25}
  \begin{split}
    & \int_{B_L\setminus B_{1/L}(0)}|l_k|^2\;dx 
    \le \lambda_k r_k^2 \sup_{1/L \le |x| \le L}e^{v_k^2(x)}
    \bigg(\lambda_k^{-1} \int_{B_{Lr_k}(x_k)}\dot{u}_k^2 e^{u_k^2}\; dx\bigg)\\
    &\le (2\pi)^{-1} L^2
    \sup_{1/L \le |x| \le L}\frac{P_k(r_k|x|)}{u_k^2(r_k|x|)}
    \bigg(\lambda_k^{-1} \int_{\Omega}\dot{u}_k^2 e^{u_k^2}\; dx\bigg)
    \rightarrow 0
  \end{split}
\end{equation}
for any fixed $L > 1$ as $k \rightarrow \infty$.

Since from \eqref{1.6} or \eqref{0.1}, respectively, we also have the uniform $L^2$-bound
\begin{equation*}
    ||\nabla \tilde{v}_k||_{L^2} = ||\nabla u_k||_{L^2} \le C,
\end{equation*}
we may extract a subsequence $(u_k)$ such that 
$\tilde{v}_k \rightarrow \tilde{v}$
weakly in $H^1_{loc}({\mathbb R}^2)$, where $\tilde{v}$ is harmonic 
away from the origin. In addition, $\nabla \tilde{v} \in L^2({\mathbb R}^2)$; since 
the point $x = 0$ has vanishing $H^1$-capacity, we then have 
$\Delta \tilde{v} = 0$ in the distribution sense on all of ${\mathbb R}^2$ 
and $\tilde{v}$ is a smooth, everywhere harmonic function. Again invoking the fact that 
$\nabla \tilde{v} \in L^2({\mathbb R}^2)$, and recalling that 
$\tilde{v}(1)=\tilde{v}_k(1)=0$,
then we see that $\tilde{v}$ vanishes identically; that is,
$\tilde{v}_k \rightarrow 0$ weakly in $H^1_{loc}({\mathbb R}^2)$.
 
Recalling that for radially symmetric functions weak $H^1$-convergence implies 
locally uniform convergence away from the origin, we obtain the claim. 
\end{proof}

Now $\eta_k(x) = v_k(1)(v_k(x) - v_k(1))$ satisfies the equation 
\begin{equation}\label{5.27}
    - \Delta \eta_k = \lambda_k r_k^2 v_k(1)v_k e^{v_k^2} 
                          - r_k^2  v_k(1) \dot{v}_k e^{v_k^2} =:I_k + II_k.
\end{equation}
By Lemma \ref{lemma5.5}
for any $L > 1$ we can bound $\sup_{B_{L}\setminus B_{1/L}} v_k(1)/v_k \le 2$
for sufficiently large $k$. Lemma \ref{lemma4.1}, \eqref{5.1}, 
and \eqref{5.11b} then yield
\begin{equation}\label{5.26}
 \begin{split}
   \int_{B_{L}} |II_k|\; dx
    & \le\int_{B_1} |II_k| dx + \int_{B_{L}\setminus B_{1}}|II_k| dx \\
   & \le o(1) + 2\bigg(\lambda_k \int_{B_{Lr_k}} u_k^2 e^{u_k^2}dx\cdot
       \lambda_k^{-1} \int_{B_{Lr_k}}\dot{u}_k^2 e^{u_k^2}\; dx\bigg)^{1/2} 
   \rightarrow 0,
  \end{split}
\end{equation}
with error $o(1)\rightarrow 0$ as $k \rightarrow \infty$ for any fixed $L > 1$.
Upon estimating 
$v_k(1)v_k e^{v_k^2} \le \max\{v_k^2(1) e^{v_k^2(1)},v_k^2 e^{v_k^2}\}$,
for $1/L \le |x| \le L$ by \eqref{4.6} we can bound the remaining term
\begin{equation}\label{5.28}
  \begin{split}
    I_k(x) \le (2\pi)^{-1} \max\{P_k(r_k), |x|^{-2} P_k(r_k|x|)\} \le C(1+L^2)
  \end{split}
\end{equation}
Moreover, letting $\hat{v}_k = v_k/v_k(1)\rightarrow 1$ in 
$B_{L}\setminus B_{1/L}$, we have
\begin{equation}\label{5.29}
  \begin{split}
    I_k = \lambda_k r_k^2 v_k^2(1) e^{v_k^2(1)} \hat{v}_k 
      e^{v_k^2 - v_k^2(1)} = p_k \hat{v}_k e^{\eta_k(1+\hat{v}_k)},
  \end{split}
\end{equation}
where $p_k = (2\pi)^{-1} P_k(r_k) \ge p_0 > 0$ by \eqref{5.17}. 

Finally, similar to \eqref{5.11e} and in view of \eqref{5.9} we find
\begin{equation}\label{5.31}
  \begin{split} 
   \int_{B_{1/L}(0)} & I_k\; dx 
   = \int_{B_{r_k/L}(0)} \lambda_k u_k(r_k) u_k e^{u_k^2} dx\\ 
   & \le N_k(Lr_k^-,r_k/L) + C\Lambda \frac{u_k(s_k^-)}{u_k(Lr_k^-)}
   \rightarrow 0 ,
  \end{split} 
\end{equation}
if we first let $k \rightarrow \infty$ and then pass to the 
limit $L \rightarrow \infty$.

\begin{lemma}\label{lemma5.6}
There exist a subsequence $(u_k)$ such that $\eta_k \rightarrow \eta_{\infty}$
locally uniformly on ${\mathbb R}^2\setminus \{0\}$ as $k \rightarrow \infty$.
\end{lemma}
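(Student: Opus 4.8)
The goal is to upgrade the weak $H^1_{loc}({\mathbb R}^2\setminus\{0\})$-information on $\eta_k$ — which we already essentially have from Lemma~\ref{lemma5.5} and the uniform $L^2$-bound $\|\nabla u_k\|_{L^2}\le C$ — to locally uniform convergence away from the origin, and to identify the limit as a solution of Liouville's equation. The natural approach is a bootstrap on the equation \eqref{5.27}. First I would observe that by \eqref{5.26} the term $II_k$ tends to $0$ in $L^1_{loc}({\mathbb R}^2\setminus\{0\})$, so it is harmless. The main term $I_k$ is nonnegative, and by \eqref{5.28} it is bounded in $L^\infty_{loc}({\mathbb R}^2\setminus\{0\})$ for each fixed $L$. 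Hence $-\Delta\eta_k$ is bounded in $L^p$ on every annulus $B_L\setminus B_{1/L}$ for every $p<\infty$; combined with the $H^1_{loc}$-bound and $\eta_k(1)=0$, elliptic $L^p$-theory gives a uniform $W^{2,p}_{loc}$-bound on $\eta_k$ away from the origin, and Sobolev embedding then yields a uniform $C^{1,\alpha}_{loc}({\mathbb R}^2\setminus\{0\})$-bound. Passing to a subsequence, $\eta_k\rightarrow\eta_\infty$ in $C^1_{loc}({\mathbb R}^2\setminus\{0\})$, and in particular the convergence is locally uniform on ${\mathbb R}^2\setminus\{0\}$, which is exactly the assertion of the Lemma.

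**The identification of the limit (carried out in the sequel, but worth noting).**
Once locally uniform convergence is in hand, one reads off from \eqref{5.29} — using $\hat v_k=v_k/v_k(1)\rightarrow 1$ locally uniformly away from $0$ (Lemma~\ref{lemma5.5}) and $p_k=(2\pi)^{-1}P_k(r_k)\ge p_0>0$, with $p_k$ bounded by \eqref{4.6}, hence convergent along a subsequence to some $p_\infty\ge p_0>0$ — that $I_k\rightarrow p_\infty e^{2\eta_\infty}$ locally uniformly on ${\mathbb R}^2\setminus\{0\}$. Therefore $\eta_\infty$ solves $-\Delta\eta_\infty=p_\infty e^{2\eta_\infty}$ on ${\mathbb R}^2\setminus\{0\}$. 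The estimate \eqref{5.31} shows that $\int_{B_{1/L}(0)}I_k\,dx\rightarrow 0$ as $k\rightarrow\infty$ and then $L\rightarrow\infty$, so no mass of $I_k$ escapes into the origin; combined with the $L^2$-bound on $\nabla\eta_k$ and the vanishing $H^1$-capacity of a point, the origin is a removable singularity, so $\eta_\infty$ extends to a smooth solution on all of ${\mathbb R}^2$ with $\int_{{\mathbb R}^2}e^{2\eta_\infty}\,dx<\infty$ (finiteness from \eqref{5.1} via the change of variables in \eqref{5.18}). The Chen--Li classification \cite{Chen-Li} then forces $p_\infty=2$ and $\eta_\infty=\eta-\log c$ for the appropriate normalization; tracking the normalization $\eta_k(1)=0$ and radial symmetry pins down $\eta_\infty(x)=\log\big(\frac{2}{1+|x|^2}\big)=\eta(x)$, which is the claim of Proposition~\ref{prop5.1}.

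**The main obstacle.**
The delicate point is not the elliptic regularity itself but ensuring that the $L^\infty_{loc}$-bound on $I_k$ away from the origin — inequality \eqref{5.28} — is genuinely uniform in $k$ on each fixed annulus, and that the constants do not deteriorate as one lets $L\rightarrow\infty$ afterwards. This is where \eqref{4.6} is essential: it is precisely the uniform pointwise estimate $\lambda_k\inf_i|x-x_k^{(i)}|^2 u_k^2 e^{u_k^2}\le C$ that converts to a bound on $P_k(r_k|x|)=2\pi\lambda_k r_k^2|x|^2 v_k^2 e^{v_k^2}$ and hence controls $I_k$ after the rescaling. One must also be a little careful that the bad set $\{g_k\ge r_k\}$ treated in Lemma~\ref{lemma5.5} does not interfere — but there $g_k$ is simply $\le r_k\rightarrow 0$ off that set and controlled by \eqref{4.6} on it, so $I_k=v_k(1)g_k/v_k\cdot(\dots)$ is uniformly bounded either way on the annulus once $v_k(1)\rightarrow\infty$. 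A secondary subtlety, deferred to the identification step, is excluding concentration of $I_k$ at $0$; this is exactly what \eqref{5.31} is designed to rule out, and it relies on the induction hypothesis \eqref{5.9} at the previous scale together with the decay $u_k(s_k^-)/u_k(Lr_k^-)\rightarrow 0$.
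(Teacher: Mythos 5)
There is a genuine gap in the first (and decisive) step. Your bootstrap invokes ``the $H^1_{loc}$-bound and $\eta_k(1)=0$'' to run elliptic $L^p$-theory on \eqref{5.27}, but no uniform local bound on $\eta_k$ itself is available at this stage. The bound $\|\nabla u_k\|_{L^2}\le C$ controls only $\tilde v_k=v_k-v_k(1)=\eta_k/v_k(1)$ (this is exactly what Lemma \ref{lemma5.5} uses), and since $v_k(1)=u_k(r_k^{(l+1)})\to\infty$ one has $\|\nabla\eta_k\|_{L^2}=v_k(1)\,\|\nabla u_k\|_{L^2}$, which is unbounded; nor is any $L^p_{loc}$ bound on $\eta_k$ known beforehand (the paper derives the $W^{1,q}_{loc}$, $q<2$, bound on $\eta_k$ only \emph{after} Lemma \ref{lemma5.6}). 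Knowing $\Delta\eta_k$ in $L^p$ on an annulus together with the single point value $\eta_k(1)=0$ does not give interior $W^{2,p}$ estimates: $\eta_k$ is controlled only modulo harmonic functions, and in the radial setting the harmonic part is $a_k+b_k\log r$, where $a_k$ is pinned by $\eta_k(1)=0$ but $b_k$ is a priori unbounded, so $\eta_k$ could still drift to $-\infty$ locally uniformly away from $|x|=1$. Ruling out precisely this degeneration is the content of the lemma, and your argument never addresses it. (A smaller inaccuracy: for $II_k$ only an $L^2$-decay on annuli is available, as in \eqref{5.25}, not boundedness in every $L^p$; this is harmless in two dimensions, but the claim ``for every $p<\infty$'' is unjustified.)

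The paper's proof supplies exactly the missing ingredient: on each annulus $B_L\setminus B_{1/L}(0)$ it decomposes $\eta_k=h_k+n_k$ with $\Delta h_k=0$ and $n_k=0$ on the boundary; by \eqref{5.26}, \eqref{5.28} the part $n_k$ is compact in $W^{1,q}$, $q<2$, hence converges uniformly by radial symmetry, while the one-sided bound $\eta_k^+\le e^{(1+\hat v_k)\eta_k}\le C\,I_k/p_0\le C(L)$ coming from \eqref{5.28}--\eqref{5.29} yields $\int h_k^+\,dx\le C(L)$. The mean value property and Harnack's inequality then force either locally uniform convergence of $h_k$ or $h_k\to-\infty$ locally uniformly, and the normalization $\eta_k(1)=0$ excludes the latter. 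Your proposal contains the right raw ingredients ($I_k$ locally bounded, $II_k$ vanishing, $\eta_k(1)=0$), but to close the argument you need some version of this harmonic decomposition plus Harnack step in place of the asserted $H^1_{loc}$ bound; the identification of the limit in your second part is fine once locally uniform convergence has actually been established.
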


\begin{proof}
For any $L > 1$ decompose $ \eta_k = h_k + n_k$ on $B_L \setminus B_{1/L}(0)$,
where $\Delta h_k = 0$ in $B_L \setminus B_{1/L}(0)$, and where $n_k = 0$ 
on $\partial (B_L \setminus B_{1/L}(0))$. In view of \eqref{5.26}, \eqref{5.28},
and passing to a subsequence, if necessary, we may assume that 
$n_k \rightarrow n$ as $k \rightarrow \infty$ 
in $W^{1,q}$ on $B_L \setminus B_{1/L}(0)$ for any 
$q < 2$ and therefore also uniformly by radial symmetry.

On the other hand, letting $h_k^+ = \max\{0,h_k\}$,
from \eqref{5.28} - \eqref{5.29} for sufficiently large $k$ we obtain the estimate
\begin{equation*}
  \begin{split} 
   \int_{B_L \setminus B_{1/L}(0)}h_k^+dx & 
   \le \int_{B_L \setminus B_{1/L}(0)}(\eta_k^+ + |n_k|)\;dx \\
   & \le \int_{B_L \setminus B_{1/L}(0)}e^{(1+\hat{v}_k)\eta_k}dx + C(L)
   \le C(L) < \infty\, .
  \end{split} 
\end{equation*}
From the mean value property of harmonic functions and Harnack's inequality 
we conclude that either $h_k \rightarrow h$ 
locally uniformly on $B_L \setminus B_{1/L}(0)$, or
$h_k \rightarrow -\infty$ and hence $\eta_k \rightarrow -\infty$
locally uniformly on $B_L \setminus B_{1/L}(0)$ as $k \rightarrow \infty$.
But the identity $\eta_k(1)=0$ excludes the latter case, 
and the assertion follows.
\end{proof}

Now we can complete the proof of Proposition \ref{prop5.1}. 
Since $\Delta \eta_k$ 
by \eqref{5.27} - \eqref{5.31} is uniformly bounded in $L^1(B_L(0))$, 
the sequence $(\eta_k)$ is bounded in 
$W^{1,q}(B_L(0))$ for any $q < 2$ and any $L > 1$ and we may assume 
that $\eta_k \rightarrow \eta_0$ also weakly locally in $W^{1,q}$ on 
${\mathbb R}^2$ as $k \rightarrow \infty$.

By Lemmas \ref{lemma5.5} and \ref{lemma5.6} we may then pass to the limit 
$k \rightarrow \infty$ in equation \eqref{5.27} to see that $\eta_{\infty}$ solves the
equation 
\begin{equation}\label{5.33}
   -\Delta \eta_{\infty}= p_{\infty} e^{2\eta_{\infty}} \hbox{ on } {\mathbb R}^2\setminus \{0\},
\end{equation}
for some constant $p_{\infty} = \lim_{k \rightarrow \infty}p_k > 0$.
Moreover, by Lemma \ref{lemma5.6}, and \eqref{5.28} we have
\begin{equation*}
   p_{\infty} e^{2\eta_{\infty}} 
   = \lim_{k \rightarrow \infty} p_k\hat{v}_k^2 e^{\eta_k(\hat{v}_k(x) + 1))}
   = \lim_{k \rightarrow \infty} \hat{v}_k I_k
   = \lim_{k \rightarrow \infty} r_k^2 e_k(r_k\cdot)
\end{equation*}
locally uniformly on ${\mathbb R}^2\setminus \{0\}$.
Thus, with a uniform constant $C$ for any $L > 1$ we have 
\begin{equation*}
 \begin{split}
   p_{\infty} \int_{B_L\setminus B_{1/L}(0)}e^{2 \eta_{\infty}} \; dx & \le 
   \liminf_{k \rightarrow \infty}  \int_{B_{Lr_k}\setminus B_{r_k/L}(0)}e_k\; dx
   \le C \Lambda.
  \end{split}
\end{equation*}
Passing to the limit $L \rightarrow \infty$, we see that  
$e^{2 \eta_{\infty}} \in L^1({\mathbb R}^2)$. 
By \eqref{5.26} and \eqref{5.31} we also have
\begin{equation*}
   \limsup_{k \rightarrow \infty}\int_{B_{1/L}(0)} |\Delta \eta_k|\;dx
   \rightarrow 0
\end{equation*}
as $L \rightarrow \infty$. Hence $\eta_{\infty}$
extends to a distribution solution of \eqref{5.33} on all of ${\mathbb R}^2$. 
Our claim then follows from the Chen-Li \cite{Chen-Li} classification 
of all solutions $\eta_{\infty}$ to equation \eqref{5.33} on 
${\mathbb R}^2$ with $e^{2 \eta_{\infty}} \in L^1({\mathbb R}^2)$ 
in view of radial symmetry of $\eta_{\infty}$ together with the fact that 
$\eta_{\infty}(1) = \eta_k(1) = 0$.

\subsection{The general case}
For the proof of Theorem \ref{thm5.1} in the general case fix an index 
$1 \le i \le i_*$ and let $x_k= x^{(i)}_k \rightarrow x^{(i)}$, 
$0 < r_k = r^{(i)}_k \rightarrow 0$ 
be determined as in Theorem \ref{thm4.1} so that 
$u_k(x_k) = \max_{|x-x_k| \le L r_k}u_k(x)$ for any $L > 0$ and 
sufficiently large $k$ and such that
\begin{equation}\label{5.34}
  \eta_k(x) = \eta^{(i)}_k(x) := 
  u_k(x_k)(u_k(x_k + r_k x ) - u_k(x_k)) 
  \rightarrow \log \left(\frac{1}{1+|x|^2}\right)
\end{equation}
as $k \rightarrow \infty$. 
For each $k$ we may shift the origin so that henceforth
we may assume that $x_k = 0$ for all $k$. 
Denote as $\Omega_k = \Omega_k^{(i)}$ the shifted domain $\Omega$.
We also extend $u_k$ by $0$ outside $\Omega_k$ to obtain
$u_k \in H^1({\mathbb R}^2)$, still satisfying 
\eqref{5.1}.

Again we let 
$e_k = \lambda_k u_k^2 e^{u_k^2}$, $f_k = \lambda_k u_k(0)u_k e^{u_k^2}$,
and for $0 < r < R$ we set 
\begin{equation*}
   \Lambda_k(r) = \int_{B_r} e_k \; dx, \;
   \sigma_k(r) = \int_{B_r} f_k \; dx ,\; 
\end{equation*}
satisfying \eqref{5.3}. 

Also introduce the spherical mean
$\bar{u}_k(r) = \intbar_{\partial B_r} u_k do$ of $u_k$ on 
$\partial B_r$, and so on, and set
$\tilde{e}_k = \lambda_k \bar{u}_k^2e^{\bar{u}_k^2}$. 

The spherical mean
$\bar{w}_k$ of the function
\begin{equation*}
      w_k(x) = u_k(0)(u_k(x) - u_k(0)),
\end{equation*}
satisfies the equation
\begin{equation} \label{5.35}
  \begin{split}
   - \Delta \bar{w}_k = \bar{f}_k - \bar{d}_k,
  \end{split}
\end{equation}
where $\bar{f}_k= \lambda_k u_k(0) \overline{u_k e^{2u_k^2}}$ and where
\begin{equation*}
   \bar{d}_k = u_k(0)  \overline{\dot{u}_k e^{u_k^2}} \rightarrow 0 \hbox{ in } L^1(B_{Lr_k})
\end{equation*}
for any $L > 0$ as $k \rightarrow \infty$ similar to \eqref{5.11b}.

Note that by Jensen's inequality we have
\begin{equation} \label{5.36}
   \tilde{e}_k \le \bar{e}_k;
\end{equation}
hence
\begin{equation*}
   \tilde{\Lambda}_k(r):= \int_{B_r} \tilde{e}_k \; dx \le \Lambda_k(r), \;
   \int_{B_r} \bar{f}_k \; dx = \sigma_k(r).
\end{equation*}
Observe that in analogy with \eqref{5.3} Theorem \ref{thm4.1} implies
\begin{equation} \label{5.37}
  \begin{split}
   \lim_{L \rightarrow \infty} \lim_{k \rightarrow \infty} \tilde{\Lambda}_k(Lr_k) 
   = \lim_{L \rightarrow \infty} \lim_{k \rightarrow \infty} \Lambda_k(Lr_k) 
   = \lim_{L \rightarrow \infty} \lim_{k \rightarrow \infty} \sigma_k(Lr_k)
   = \Lambda_1.
  \end{split}
\end{equation}

To proceed, we need the following estimate similar to the 
gradient estimate of Druet \cite{Druet}, Proposition 2. 
For any $k \in {\mathbb N}$, $x \in \Omega$ we let 
\begin{equation*}
   R_k(x) = \inf_{1 \le j \le i_*} |x - x_k^{(j)}|.
\end{equation*}

\begin{proposition}\label{prop5.2}
There exists a uniform constant $C$ such that for all $y \in \Omega$ there
holds
\begin{equation*}
   \sup_{z \in B_{R_k(y)/2}(y)}|u_k(y) - u_k(z)|  u_k(y) \le C,
\end{equation*}
uniformly in $k \in {\mathbb N}$.
\end{proposition}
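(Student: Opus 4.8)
The plan is to follow Druet's argument for the analogous elliptic gradient estimate, adapting it to the presence of the parabolic error term $u_k(0)\dot u_k e^{u_k^2}$, which by Lemma \ref{lemma4.1} and H\"older's inequality (as in \eqref{5.11b}) is harmless in $L^1$. I would argue by contradiction: suppose the quantity $\Theta_k := \sup_{y\in\Omega}\sup_{z\in B_{R_k(y)/2}(y)} |u_k(y)-u_k(z)|\,u_k(y)$ is unbounded along a subsequence, and pick $y_k\in\Omega$ and $z_k\in B_{R_k(y_k)/2}(y_k)$ nearly realizing the supremum, so that $|u_k(y_k)-u_k(z_k)|\,u_k(y_k)\to\infty$. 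Since $u_k\to u_\infty$ in $H^2_{loc}$ away from the concentration points $x^{(1)},\dots,x^{(i_*)}$ by Theorem \ref{thm4.1}, the point $y_k$ must approach one of these, say $x^{(i)}$, and in fact $R_k(y_k)/\rho_k\to\infty$ where $\rho_k$ is a natural length scale at $y_k$ (otherwise $y_k$ sits in a region already controlled by the bubble from Theorem \ref{thm4.1}, where $|u_k(y)-u_k(z)|u_k(y)$ is bounded by \eqref{4.3}). In particular $u_k(y_k)\to\infty$ and $R_k(y_k)\to 0$.

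Next I would rescale at $y_k$ at its own scale. Define $\mu_k>0$ by a normalization of Struwe's type, e.g. $\lambda_k\mu_k^2 u_k^2(y_k)e^{u_k^2(y_k)} = $ const (the analogue of \eqref{4.2} centered at $y_k$), and set $w_k(x)=u_k(y_k)\big(u_k(y_k+\mu_k x)-u_k(y_k)\big)$. From $R_k(y_k)/\mu_k\to\infty$ (which must be checked: if $R_k(y_k)\le C\mu_k$ then near $y_k$ one sees, via the pointwise bound \eqref{4.6}, that $u_k(y_k)$ is comparable to the height of a nearby bubble and the claimed estimate holds by Theorem \ref{thm4.1} there) one gets that on every fixed ball $B_L(0)$ the rescaled domain eventually contains $B_L(0)$ and avoids all other concentration points at scale $\mu_k$. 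Using equation \eqref{1.1} in rescaled form, exactly as in the proof of Theorem \ref{thm4.1} (estimates \eqref{4.8}, \eqref{4.9}), the right-hand side $-\Delta w_k = I_k + II_k$ has $I_k$ bounded and $II_k\to 0$ in $L^2_{loc}$; since also $w_k\le o(1)$ on bounded sets (the analogue of \eqref{4.10}, from maximality of $u_k(y_k)$ on $B_{L\mu_k}(y_k)$) and $w_k(0)=0$, elliptic estimates give $w_k\to w_\infty$ in $H^2_{loc}$, with $w_\infty$ solving a Liouville-type equation on $\mathbb R^2$ with finite total mass $\le\Lambda_0$ by \eqref{5.1}. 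By Chen-Li, $w_\infty$ is the standard bubble, which is \emph{bounded below only near its maximum}: precisely $|w_\infty(x)|\le C$ on every fixed ball. Hence $|u_k(y_k)-u_k(y_k+\mu_k x)|\,u_k(y_k)=|w_k(x)|$ stays bounded on $B_L(0)$ for every $L$, so the contribution to $\Theta_k$ from $z$ with $|z-y_k|\le L\mu_k$ is bounded.

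It remains to control $z_k$ with $|z_k-y_k|/\mu_k\to\infty$ (while still $|z_k-y_k|\le R_k(y_k)/2$). Here I would use a decay estimate for $w_k$ away from the bubble core, in the spirit of Lemma \ref{lemma5.1} and Lemma \ref{lemma5.3}: integrating the equation $-\Delta w_k = f_k - d_k$ over annuli and using that $\sigma_k(L\mu_k)\to\Lambda_1=4\pi > 0$, one obtains $w_k(x)\le b\log(\mu_k/|x-y_k|)$ with any $b<2$ on $B_{R_k(y_k)/2}(y_k)\setminus B_{L\mu_k}(y_k)$ for large $k,L$; combined with a matching lower bound for $w_k$ (again via the integrated equation, since the total mass $\Lambda_k$ is bounded above by \eqref{5.1} so that $w_k$ cannot plunge to $-\infty$ faster than logarithmically), one gets $|w_k(x)|\le C\log(\mu_k/|x-y_k|) + C$, whence $|u_k(y_k)-u_k(z_k)|\,u_k(y_k)\le C\big(\log(|z_k-y_k|/\mu_k)\big)/u_k(y_k) + \dots$. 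The point is that the logarithm is divided by the large factor $u_k(y_k)\to\infty$ coming from the difference of \emph{unnormalized} heights, so this term stays bounded; more carefully, one argues that if $|z_k - y_k|$ were so large that this contribution blew up, then $u_k(z_k)$ would itself be large and one could re-center the analysis at $z_k$ (or at an intermediate point), producing a new bubble and thereby forcing, via the quantization $\Lambda_k \ge (i_*+1)\Lambda_1$, a contradiction with the already-exhausted number $i_*$ of bubbles in Theorem \ref{thm4.1}.

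The main obstacle is the last step: ruling out a slow logarithmic growth of $|u_k(y_k)-u_k(z_k)|u_k(y_k)$ over the full range up to $|z_k-y_k|\sim R_k(y_k)/2$ without assuming radial symmetry. In the radial sections above this is handled cleanly by ODE comparison on $2\pi t w_k'(t) = -\sigma_k(t)+o(1)$; in the general case one must replace this by the spherical-mean equation \eqref{5.35} together with Proposition \ref{prop5.2}'s own conclusion used iteratively (a bootstrap), or by a Pohozaev/Green's-representation argument controlling $u_k - \bar u_k$. Establishing that the oscillation of $u_k$ over each dyadic annulus in $B_{R_k(y_k)}(y_k)\setminus B_{L\mu_k}(y_k)$ decays geometrically — so that the total telescoped oscillation times $u_k(y_k)$ is summable — is the technical heart of the proof, and is exactly where Druet's gradient estimate technique is needed.
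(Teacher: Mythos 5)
There is a genuine gap, and it sits at the center of your argument. Your plan is to blow up at $y_k$ at its ``own bubble scale'' $\mu_k$ defined by $\lambda_k\mu_k^2u_k^2(y_k)e^{u_k^2(y_k)}=\mathrm{const}$ and to obtain a Chen--Li bubble there. But the concentration analysis of Theorem \ref{thm4.1} has already terminated, so the pointwise bound \eqref{4.6} gives $\lambda_kR_k(y_k)^2u_k^2(y_k)e^{u_k^2(y_k)}\le C$, hence $\mu_k\ge c\,R_k(y_k)$ for all $k$: the case $R_k(y_k)/\mu_k\to\infty$ that you analyze in detail never occurs, and the case you dismiss in parentheses ($R_k(y_k)\le C\mu_k$) is the only one. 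Your claim that in that case ``the claimed estimate holds by Theorem \ref{thm4.1} there'' is unjustified: Theorem \ref{thm4.1} controls the bubble cores at scales $O(r_k^{(i)})$ and gives $H^2_{loc}$-convergence only at fixed distance from the points $x^{(i)}$, but says nothing about the oscillation of $u_k$ on neck annuli at the scale $R_k(y_k)$, which is precisely what Proposition \ref{prop5.2} must control. (Also, the argument of Theorem \ref{thm4.1} requires near-maximality of $u_k$ on $B_{L\mu_k}(y_k)$, the analogue of \eqref{4.7}/\eqref{4.6a}, which an arbitrary point nearly realizing the supremum $L_k$ does not enjoy; and the preliminary fact $u_k(y_k)\to\infty$ itself needs an argument -- in the paper this is Lemma \ref{lemma5.7}, proved by a midpoint trick.) Finally, you yourself concede that the decisive step -- oscillation control over the whole range $L\mu_k\le|z-y_k|\le R_k(y_k)/2$ without radial symmetry -- is open, and the fixes you suggest are circular: in the general case the decay estimates of Lemma \ref{5.10g} (the analogue of Lemmas \ref{lemma5.1}, \ref{lemma5.3}) are themselves derived from Proposition \ref{prop5.2} via \eqref{5.38}--\eqref{5.40}, so they cannot be used to prove it.

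For comparison, the paper's proof produces no new bubble and uses no classification result. Assuming $L_k\to\infty$, it picks $y_k,z_k$ with $|u_k(y_k)-u_k(z_k)|u_k(y_k)\ge L_k/2$, proves $u_k(y_k)\to\infty$ (Lemma \ref{lemma5.7}) and the oscillation bound $|u_k^2(y)-u_k^2(z)|\le CL_k$ (Lemma \ref{lemma5.8}), and rescales at the scale $s_k=R_k(y_k)$ -- not at a bubble scale. The key object is the renormalized function $\tilde v_k=L_k^{-1}\bigl(v_k-v_k(0)\bigr)v_k(0)$: a dyadic chaining argument that exploits the definition of $L_k$ itself, together with Lemma \ref{lemma5.10} ($v_k/v_k(0)\to1$ away from the scaled concentration set), yields the at most logarithmic growth \eqref{5.49}; one then shows $\Delta\tilde v_k\to0$ in $L^1_{loc}({\mathbb R}^2)$ and locally in $L^2$ away from $S_0$, so the limit $\tilde v$ is an entire harmonic function of logarithmic growth, hence constant, contradicting the normalization $\sup_{B_{1/2}(0)}|\tilde v|\ge 1/2$ from \eqref{5.51}. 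This renormalization-plus-Liouville mechanism is exactly the ingredient your proposal is missing.
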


The proof of Proposition \ref{prop5.2} is given in the next section.

Recalling that $x_k^{(i)} = 0$, we let  
\begin{equation*}
     \rho_k =\rho^{(i)}_k = \frac12 \inf_{j \neq i} |x_k^{(j)}|,
\end{equation*}     
and we set $\rho_k = \operatorname{diam}({\Omega})$ if $\{j;\;j \neq i\}= \emptyset$, that is, if
there is no other concentration point but $x_k^{(i)}$.
We now use Proposition \ref{prop5.2} to deal with concentrations 
around the point $x_k^{(i)}$ at scales which are small with respect 
to $\rho_k $.

Indeed, for $|x| \le \rho_k$ we have $|x| = R_k(x)$; therefore, by 
Proposition \ref{prop5.2} and Lemma \ref{lemma5.8} below
for any $0 <  r  \le \rho_k$ with a uniform 
constant $C$ there holds
\begin{equation}\label{5.38}
   \sup_{r/2 \le |x| \le r}{u_k^2(x)} - \inf_{r/2 \le |x| \le r}u_k^2(x)
   \le C.
\end{equation}
Hence, in particular, there holds   
\begin{equation}\label{5.39}
   \sup_{r/2 \le |x| \le r}{e^{u_k^2(x)}} \le C e^{\bar{u}_k^2(r)}, 
\end{equation}
and we conclude the estimate 
\begin{equation}\label{5.40}
   \frac{1}{C_3} \sup_{r/2 \le |x| \le r}{u_k^2(x) e^{u_k^2(x)}} 
   \le (1+ \bar{u}_k^2(r)) e^{\bar{u}_k^2(r)} 
   \le C_3\inf_{r/2 \le |x| \le r}(1+u_k^2(x)) e^{u_k^2(x)}
\end{equation}
with a uniform constant $C_3$. In the following we proceed as in \cite{Struwe07};
therefore we only sketch the necessary changes we have to perform in the
present case. 

Because of our choice of origin $x_k^{(i)}=0$ there holds $u_k(x)\le u_k(0)$ for all 
$|x|\le Lr_k$, $k \ge k_0(L)$; hence at this scale there also 
holds the inequality $e_k \le f_k$. 

Similar to Lemma \ref{lemma5.1} with the help of \eqref{5.40} we obtain

\begin{lemma}\label{5.10g}
For any $\varepsilon >0$, if there is a minimal number $0<T_k \le \rho_k$ such that 
$\bar{u}_k(T_k) = \varepsilon u_k(0)$, then 
for any constant $b < 2$ and sufficiently large $k$ there holds 
\begin{equation*}
   \bar{w}_k(r) \le b \log\left(\frac{r_k}{r}\right) \hbox{ on } B_{T_k}
\end{equation*}
and we have
\begin{equation*}
   \lim_{k \rightarrow \infty}\tilde{\Lambda}_k(T_k) =
   \lim_{k \rightarrow \infty}\Lambda_k(T_k) = 
   \lim_{k \rightarrow \infty}\sigma_k(T_k) = 4\pi.
\end{equation*}
\end{lemma}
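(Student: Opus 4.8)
The proof of Lemma \ref{5.10g} will closely parallel that of Lemma \ref{lemma5.1}, with the spherically-averaged quantities $\bar w_k$, $\bar f_k$, $\tilde\Lambda_k$, $\sigma_k$ playing the roles of $w_k$, $f_k$, $\Lambda_k$, $\sigma_k$, and with the two-sided oscillation bound \eqref{5.40} used to pass freely between pointwise and averaged exponential integrals. First I would record that $T_k \to 0$ as $k \to \infty$: away from $x^{(i)}$ the sequence $u_k$ converges locally uniformly to $u_\infty$ by the last statement of Theorem \ref{thm4.1}, while $u_k(0) = \max_{|x| \le L r_k} u_k \to \infty$, so the level $\bar u_k(T_k) = \varepsilon u_k(0)$ cannot be attained on a set staying away from the origin. (If no such $T_k \le \rho_k$ exists the statement is vacuous, so we assume it does.)

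Next I would integrate \eqref{5.35} over $B_t$, $L r_k \le t \le T_k$, exactly as in \eqref{5.4}. Since $\bar u_k(t) \ge \varepsilon u_k(0)$ on this range, the decay $\bar d_k \to 0$ in $L^1(B_{L r_k})$ noted after \eqref{5.35} — combined with \eqref{5.40}, which controls $\int_{B_t \setminus B_{L r_k}} \bar f_k$ from below by $\int e_k$ there up to the uniform constant — together with \eqref{5.37} gives
\begin{equation*}
   2\pi t \bar w_k'(t) = \int_{B_t} \Delta \bar w_k \, dx = -\sigma_k(t) + o(1) \le -\Lambda_1 + o(1),
\end{equation*}
with error $o(1) \to 0$ uniformly in $t$ when first $k \to \infty$ and then $L \to \infty$. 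Hence for any $b < 2$, large $L \ge L(b)$ and $k \ge k_0(L)$ we get $\bar w_k'(t) \le -b/t$ on $[L r_k, T_k]$; integrating from $L r_k$ to $r$ and using that $\eta_k(L) < -b\log L$ for large $L$ (from \eqref{5.34}, since $\eta_0(x) = \log\frac{1}{1+|x|^2} < -b\log|x|$ for $|x|$ large) yields, just as in \eqref{5.5},
\begin{equation*}
   \bar w_k(r) \le \bar w_k(L r_k) - b\log\!\Big(\frac{r}{L r_k}\Big) \le b\log\!\Big(\frac{r_k}{r}\Big)
\end{equation*}
on $B_{T_k} \setminus B_{L r_k}$; on $B_{L r_k}$ the bound is immediate from \eqref{5.34}. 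This proves the pointwise estimate. Feeding it back into the definition of $\bar f_k$ via $u_k^2 = u_k^2(0) + \bar w_k + (\text{osc})$, and using \eqref{5.40} plus \eqref{4.2} (in the shifted form $\lambda_k r_k^2 u_k^2(0) e^{u_k^2(0)} = 4$), one gets $\bar f_k \le C r_k^{-2}(r_k/r)^{(1+\varepsilon)b}$ on $L r_k \le r \le T_k$; choosing $b < 2$ with $(1+\varepsilon)b = 2+\varepsilon$ and integrating over the annulus shows $\sigma_k(T_k) \le \Lambda_1 + C\varepsilon^{-1} L^{-\varepsilon} \le \Lambda_1 + \varepsilon$ for $L$, $k$ large. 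Since $e_k \le (1+o(1)) f_k$ on $B_{L r_k}$ and, by \eqref{5.40}, $e_k \le C \tilde e_k \le C\bar e_k$ is controlled by $\bar f_k$ on the outer annulus, the lower bound $\liminf \Lambda_k(T_k) \ge \tilde\Lambda_k(L r_k) \to \Lambda_1$ together with $\Lambda_k(T_k) \le \sigma_k(T_k) + o(1)$ and $\tilde\Lambda_k \le \Lambda_k$ pins all three limits to $\Lambda_1 = 4\pi$.

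The one genuinely new point compared with Lemma \ref{lemma5.1} — and what I expect to be the main obstacle — is the systematic replacement of pointwise exponential quantities by their spherical averages: one must check at each step that the Jensen inequality \eqref{5.36}, the oscillation control \eqref{5.38}–\eqref{5.40} coming from Proposition \ref{prop5.2} (and the auxiliary Lemma \ref{lemma5.8}), and the fact that $0$ is the location of the maximum at scale $r_k$ are enough to route every inequality that in the radial case used $u_k(r) = u_k(x)$ directly. In particular the passage from $\sigma_k$ (an average-based quantity) back to $\Lambda_k$ (a genuine integral of $e_k$), and the comparison $\bar w_k(r) \le b\log(r_k/r) \Rightarrow \bar f_k \le C r_k^{-2}(r_k/r)^{(1+\varepsilon)b}$, both hide a uniform loss of the constant $C_3$ from \eqref{5.40}, which must be absorbed into the freedom $(1+\varepsilon)b < 2 + \varepsilon$; this is harmless since $C_3$ is $k$-independent, but it is the place where the argument genuinely differs from \cite{Struwe07} and where care is needed. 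Everything else is verbatim the computation of Lemma \ref{lemma5.1}.
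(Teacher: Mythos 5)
Your proposal is correct and follows exactly the route the paper intends: the paper gives no separate argument for Lemma \ref{5.10g}, stating only that it follows ``similar to Lemma \ref{lemma5.1} with the help of \eqref{5.40}'', and your reconstruction --- running the proof of Lemma \ref{lemma5.1} for the spherical means $\bar w_k$, $\bar f_k$, with \eqref{5.35}, \eqref{5.37} and the oscillation bounds \eqref{5.38}--\eqref{5.40} (via Proposition \ref{prop5.2} and Lemma \ref{lemma5.8}) used to pass between pointwise and averaged quantities, and the uniform constant absorbed in the $L^{-\varepsilon}$ error --- is precisely that argument. The only quibble is presentational: in the derivation of the differential inequality the role of \eqref{5.40}/\eqref{5.38} is to give the pointwise lower bound $u_k\ge c\,\varepsilon u_k(0)$ on $B_{T_k}$ needed to extend the $L^1$-decay of $\bar d_k$ to the whole ball (the lower bound on $\sigma_k(t)$ being just monotonicity and \eqref{5.37}), which your final paragraph in any case acknowledges.
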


Next we define for $0\le s<t \le \rho_k$
\begin{equation*}
   N_k(s,t)=\int_{B_t\setminus B_s} e_k dx
   = \lambda_k \int_{B_t\setminus B_s} u_k^2 e^{u_k^2} dx,
\end{equation*}
and
\begin{equation*}
   \tilde{N}_k(s,t)=\int_{B_t\setminus B_s} \tilde{e}_k dx
   = 2\pi \lambda_k \int_s^t r \bar{u}_k^2 e^{\bar{u}_k^2} dr\le N_k(s,t),
\end{equation*}
where we used Jensen's inequality for the last estimate. 
Moreover we let
\begin{equation*}
     P_k(t)=t\frac{\partial}{\partial t} N_k(s,t)=t\int_{\partial B_t} e_k do
\end{equation*}
and
\begin{equation*}
     \tilde{P}_k(t)=t\frac{\partial}{\partial t} \tilde{N}_k(s,t)
     =t\int_{\partial B_t} \tilde{e}_k do 
     =2\pi t^2 \lambda_k \bar{u}_k^2 e^{\bar{u}_k^2} \le P_k(t).
\end{equation*}
The estimate \eqref{5.40} implies
\begin{equation}\label{5.40a}
    N_k(s,t)\le C_3\tilde{N}_k(s,t)+o(1)\ \ \text{and}\ \ 
    P_k(t)\le C_3 \tilde{P}_k(t)+o(1),
\end{equation}
with error $o(1)\rightarrow 0$ as $k\rightarrow \infty$, uniformly in 
$s\le t \le \rho_k$.
Moreover, similar to \cite{Struwe07}, estimate (26), by \eqref{5.40} with 
uniform constants $C_4$, $C_5$ we have 
\begin{equation}\label{5.40b}
    P_k(t)\le C_4 N_k(t/2,t) +o(1) \le C_5P_k(t/2)+o(1).
\end{equation}

If for some $\varepsilon>0$ there is no $T_k=T_k(\varepsilon)\le \rho_k$ 
as in Lemma \ref{5.10g} we continue our argument as described in Case 1
after Proposition \ref{prop5.1g}. Otherwise, we proceed by iteration as in the radially
symmetric case. Choose a sequence 
$\varepsilon_k \downarrow 0$ such that with corresponding numbers 
$s_k=T_k(\varepsilon_k)\le \rho_k$ we have $\bar{u}_k(s_k) \rightarrow \infty$ 
as $k\rightarrow \infty$. Then there holds
\begin{equation*}
    \lim_{k\rightarrow \infty} \Lambda_k(s_k)=\Lambda_1=4\pi
\end{equation*}
and
\begin{equation*}
    \lim_{L\rightarrow \infty} \lim_{k\rightarrow \infty}
         (\Lambda_k(s_k)-\Lambda_k(Lr_k))
    =\lim_{k\rightarrow \infty} \frac{\bar{u}_k(s_k)}{\bar{u}_k(r_k)}
    =\lim_{k\rightarrow \infty} \frac{r_k}{s_k}
    =\lim_{k\rightarrow \infty}s_k=0.
\end{equation*}

By a slight abuse of notation we let $r_k=r_k^{(1)}$, $s_k=s_k^{(1)}$.
Suppose that for some $l\ge 0$ we already have determined numbers 
$r_k^{(1)}<s_k^{(1)}<\ldots <s_k^{(l)}\le \rho_k$ 
such that
\begin{equation}\label{5.12a}
    \lim_{L\rightarrow \infty} 
    \lim_{k\rightarrow \infty} \Lambda_k(s_k^{(l)})=\Lambda_1 l=4\pi l
\end{equation}
and
\begin{equation}\label{5.12b}
    \lim_{L\rightarrow \infty} 
          \lim_{k\rightarrow \infty} (\Lambda_k(s_k^{(l)})-\Lambda_k(Lr_k^{(l)}))
    =\lim_{k\rightarrow \infty} \frac{\bar{u}_k(s_k^{(l)})}{\bar{u}_k(r_k^{(l)})}
    =\lim_{k\rightarrow \infty} \frac{r_k^{(l)}}{s_k^{(l)}}
    =\lim_{k\rightarrow \infty}s_k^{(l)}=0.
\end{equation} 

Similar to Lemma \ref{lemma5.2} we now have the following result.

\begin{lemma}\label{lemma5.2g}
i) Suppose that for some $s_k^{(l)}<t_k \le \rho_k$ there holds
\begin{equation*}
   \sup_{s_k^{(l)} <t<t_k} P_k(t) \rightarrow 0 \hbox{ as } k \rightarrow \infty.
\end{equation*}
Then we have
\begin{equation*}
   \lim_{k \rightarrow \infty}N_k(s_k^{(l)} ,t_k) = 0.
\end{equation*}
ii) Conversely, if for some $s_k^{(l)}<t_k$ and a subsequence $(u_k)$ there holds
\begin{equation*}
   \lim_{k \rightarrow \infty}N_k(s_k^{(l)} ,t_k) = \nu_0 > 0,\
   \lim_{k\rightarrow \infty} \frac{t_k}{\rho_k}=0,
\end{equation*}
then either $\nu_0 \ge \pi$, or we have 
\begin{equation*}
   \liminf_{k \rightarrow \infty}P_k(t_k) \ge \nu_0
\end{equation*}
and
\begin{equation*}
   \lim_{L \rightarrow \infty}\liminf_{k \rightarrow \infty}N_k(s_k^{(l)} ,L t_k) 
   \ge \pi, \;
   \lim_{L \rightarrow \infty}\limsup_{k \rightarrow \infty} N_k(s_k^{(l)} ,t_k/L) = 0.
\end{equation*}
\end{lemma}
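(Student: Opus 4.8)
The plan is to repeat the proof of Lemma \ref{lemma5.2} in the non-radial setting, replacing the solution $u_k$ by its spherical mean $\bar u_k$ and the densities $e_k$, $N_k$, $P_k$ by the averaged quantities $\tilde e_k=\lambda_k\bar u_k^2e^{\bar u_k^2}$, $\tilde N_k$, $\tilde P_k$, and using the oscillation bounds \eqref{5.38}--\eqref{5.40} (consequences of the Druet-type gradient estimate of Proposition \ref{prop5.2}) together with \eqref{5.40a}, \eqref{5.40b} to pass, up to uniform constants and errors $o(1)\to 0$, between any such density and its spherical average. Both assertions concern only radii $r$ with $s_k^{(l)}\le r\le t_k\le\rho_k$ --- in part i) this is assumed, and in part ii) it holds for large $k$ because $t_k/\rho_k\to 0$ --- so on the relevant annuli $|x|=R_k(x)$ and the estimates \eqref{5.38}--\eqref{5.40} are in force.

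First I would set up the Pohozaev-type identity on $B_t\setminus B_s$, $s=s_k^{(l)}$. Since $\bar w_k$ is radial and solves \eqref{5.35}, integration by parts exactly as in \eqref{5.11}, discarding the non-positive term $-\tilde P_k(s)$, gives
\[
  2\tilde N_k(s,t)\le\tilde P_k(t)-4\pi\int_s^t\lambda_k r^2\,\bar u_k'\,(1+\bar u_k^2)\,\bar u_k\,e^{\bar u_k^2}\,dr .
\]
On the other hand, taking spherical means in \eqref{1.1} yields $-2\pi r\,\bar u_k'(r)=\int_{B_r}(\lambda_k u_k-\dot u_k)e^{u_k^2}\,dx$, the analogue of \eqref{5.11a}; multiplying by $\bar u_k(r)$, bounding the $\dot u_k$-term by $o(1)$ as in \eqref{5.11b} (using Lemma \ref{lemma4.1} and \eqref{5.1}), and running the monotonicity argument \eqref{5.11c}--\eqref{5.11e} for the radial function $\bar u_k$ --- at radii where $\bar u_k$ is non-decreasing the remaining contribution is $o(1)$, while otherwise it is controlled by $N_k(s,r)$ plus a boundary term at $s$ that is $o(1)$ by \eqref{5.12b}, with \eqref{5.38}--\eqref{5.40} used to compare the pointwise integrand $\lambda_k u_k^2 e^{u_k^2}$ with $\tilde e_k$ on dyadic annuli --- one obtains, for $s\le r\le t_k$,
\[
  -2\pi r\,\bar u_k(r)\,\bar u_k'(r)\le N_k(s,r)+o(1),
\]
with $o(1)\to 0$ first as $k\to\infty$ and then as $L\to\infty$.

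Inserting this into the Pohozaev identity, using $1+\bar u_k^2\le 2\bar u_k^2$ where $\bar u_k\ge 1$ (the complementary region contributing $o(1)$), recognising $2\pi\lambda_k r\,\bar u_k^2 e^{\bar u_k^2}=\frac{\partial}{\partial r}\tilde N_k(s,r)$, and bookkeeping the oscillation constants in \eqref{5.40}, \eqref{5.40a} so as to convert $\tilde N_k$, $\tilde P_k$ into $N_k$, $P_k$ without losing the leading constant, I would arrive at the exact analogue of \eqref{5.12},
\[
  2N_k(s,t)\le P_k(t)+\pi^{-1}N_k(s,t)^2+o(1),\qquad s=s_k^{(l)}\le t\le t_k .
\]
From here the two assertions follow precisely as in Lemma \ref{lemma5.2}: in case i), letting $t$ grow from $s_k^{(l)}$ to $t_k$ and using $\sup_{s_k^{(l)}<t<t_k}P_k(t)\to 0$ forces $N_k(s_k^{(l)},t_k)\to 0$; in case ii), if $N_k(s_k^{(l)},t_k)\to\nu_0\in(0,\pi)$ the quadratic inequality gives $\nu_0+o(1)\le(2-\nu_0/\pi)N_k(s_k^{(l)},t_k)\le P_k(t_k)+o(1)$, and the lower bound $\liminf_L\liminf_k N_k(s_k^{(l)},Lt_k)\ge\pi$ together with $\lim_L\limsup_k N_k(s_k^{(l)},t_k/L)=0$ follow by the dyadic summation argument, using \eqref{5.40b} in place of \eqref{5.10} and the global bound \eqref{5.1}; here $t_k/\rho_k\to 0$ is used once more so that $Lt_k\le\rho_k$ for fixed $L$ and large $k$, keeping the estimates valid.

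The main obstacle is the step producing the quadratic inequality with the sharp coefficient $\pi^{-1}$, equivalently the threshold $\pi$ in part ii): the literal radial monotonicity of $u_k$ exploited in \eqref{5.11c}--\eqref{5.11d} is no longer available and must be replaced by monotonicity of $\bar u_k$ coupled with the oscillation control \eqref{5.38}--\eqref{5.40}, and one has to verify, as in \cite{Struwe07}, that the uniform constants $C_3,\dots$ lost through \eqref{5.40}, \eqref{5.40a} do not degrade the leading coefficient. This is possible because near the concentration scale the rescaled solution converges locally uniformly to a radial Liouville profile, so there these constants may be taken as $1+o(1)$, while the genuinely non-radial part of the annulus carries only $o(1)$ of the mass $N_k$. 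Finally, the restriction to scales $\le\rho_k$ is essential throughout, since only there do Proposition \ref{prop5.2} and \eqref{5.38}--\eqref{5.40} apply.
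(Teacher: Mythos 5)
Your overall strategy coincides with the paper's: pass to spherical means $\bar u_k$, derive a Pohozaev-type identity for $\bar w_k$, control the $\dot u_k$-term as in \eqref{5.11b}, run the monotonicity argument through the averaged identity \eqref{5.11ag}, and close with \eqref{5.40b} and dyadic summation against \eqref{5.1}. The genuine gap is the central quantitative step: you claim that by ``bookkeeping the oscillation constants in \eqref{5.40}, \eqref{5.40a}'' one can convert the averaged quantities back into $N_k,P_k$ \emph{without losing the leading constant}, so as to obtain $2N_k(s,t)\le P_k(t)+\pi^{-1}N_k(s,t)^2+o(1)$. The comparison constants coming from Proposition \ref{prop5.2} and Lemma \ref{lemma5.8} are fixed constants ($C_3=e^{C}$-type, from the bounded oscillation of $u_k^2$ on annuli in \eqref{5.38}--\eqref{5.40}), not $1+o(1)$, so this conversion degrades the coefficient and hence the threshold $\pi$. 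Your proposed remedy --- that near the concentration scale the constants are $1+o(1)$ because the rescaled solution is close to a radial Liouville profile, while ``the genuinely non-radial part of the annulus carries only $o(1)$ of the mass'' --- is unsupported: the inequality is needed on the whole neck $s_k^{(l)}\le r\le t_k$, far from the concentration scales, where only the $O(1)$ oscillation bound is available and no smallness of a ``non-radial part'' of the mass is known. It is also circular in the paper's logic: convergence to a Liouville profile at the new scale (Proposition \ref{prop5.1g}) is established only \emph{after} this lemma has been used to select the radii $r_k^{(l+1)}$.

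The paper avoids the issue by never proving a sharp inequality for the un-averaged quantities. The Pohozaev/ODE argument is run entirely for $\tilde N_k,\tilde P_k$, where Jensen's inequality (convexity of $x\mapsto xe^{x^2}$, cf.\ \eqref{5.36}, \eqref{5.11cg}) supplies the sharp coefficient for free because $\bar u_k$ is genuinely radial; the outcome is \eqref{5.12g}, i.e.\ $2\tilde N_k\le \tilde P_k+\pi^{-1}\tilde N_k^2+o(1)$. The comparisons \eqref{5.40a}, \eqref{5.40b} are then invoked only where a fixed multiplicative constant is harmless: in part i) to pass from $\tilde N_k\to0$ to $N_k\to0$, and in part ii) for positivity of $P_k(t_k)$ and for the dyadic contradiction argument; the $\pi$-thresholds for $N_k$ survive because $\tilde N_k\le N_k$ (Jensen) points in the favorable direction on the hypothesis side. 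Also note that inside the monotonicity step the paper compares $\overline{u_ke^{u_k^2}}$ with $\bar u_ke^{\bar u_k^2}$ by Jensen (leading to the bound by $\tilde N_k(s,r)$ in \eqref{5.11eg}), rather than comparing pointwise integrands with $\tilde e_k$ via \eqref{5.38}--\eqref{5.40} as you suggest, which would again cost a constant. If you restructure your argument to keep everything at the level of $\tilde N_k,\tilde P_k$ until the very end, the rest of your outline (continuity argument in $t$ for part i, quadratic dichotomy and dyadic summation for part ii, the restriction $Lt_k\le\rho_k$ guaranteed by $t_k/\rho_k\to0$) goes through as in Lemma \ref{lemma5.2}.
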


\begin{proof}
{\it i)} Because of the estimate \eqref{5.40a} it is enough to prove the Lemma with 
$N_k(s,t)$ and $P_k(t)$ replaced by $\tilde{N}_k(s,t)$ and $\tilde{P}_k(t)$. 
For $s=s_k^{(l)} < t$ we integrate by parts as before to obtain 
\begin{equation} \label{5.11gg}
   2 \tilde{N}_k(s,t) \le \tilde{P}_k(t)
   - 4 \pi \int_s^t \lambda_k r^2 \bar{u}_k'(1 + \bar{u}_k^2)\bar{u}_k e^{\bar{u}_k^2}dr.
\end{equation}

As in the proof of Lemma \ref{lemma5.2} equation \eqref {1.1} yields 
the identity 
\begin{equation} \label{5.11ag}
   - 2\pi t \bar{u}_k(t) \bar{u}_k'(t) 
   = \int_{B_t}\lambda_k \bar{u}_k(t)\overline{u_k e^{u_k^2}} dx 
     - \int_{B_t} \bar{u}_k(t)\overline{\dot{u}_k e^{u_k^2}} dx.
\end{equation}
for any $t \le \rho_k$. Arguing as in \eqref{5.11b} we get that
\begin{equation*}
   \int_{B_t} \bar{u}_k(t)\overline{\dot{u}_k e^{u_k^2}} dx\rightarrow 0 
\end{equation*}
as $k \rightarrow \infty$. In view of \eqref {5.11ag} and Jensen's inequality
at any sequence of points $t=t_k$ where $\bar{u}_k'(t) \ge 0$ then there holds 
\begin{equation}\label{5.11cg}
      0\le\int_{B_t}\lambda_k \bar{u}_k(t)\bar{u}_k e^{\bar{u}_k^2}dx 
       \le \int_{B_t}\lambda_k \bar{u}_k(t)\overline{u_k e^{u_k^2}}dx= o(1).
\end{equation}
Conversely, if $\bar{u}_k'(r) \le 0 = \bar{u}_k'(t_0)$ for 
$t_{k0}=t_0 \le r \le t=t_k$, by \eqref {5.11cg} we can estimate
\begin{equation} \label{5.11dg}
  \begin{split} 
   \int_{B_t} \lambda_k \bar{u}_k(t) \bar{u}_k e^{\bar{u}_k^2} dx 
   & \le \int_{B_t\setminus B_{t_0}} \lambda_k \bar{u}_k^2 e^{\bar{u}_k^2} dx
   + \int_{B_{t_0}} \lambda_k \bar{u}_k(t_0) \bar{u}_k e^{\bar{u}_k^2} dx \\
   & = \tilde{N}_k(t_0,t) + o(1).
  \end{split} 
\end{equation}
Combining the above estimates, similar to \eqref{5.11e} 
for $s=s_k^{(l)} \le r \le t=t_k$ we get 
\begin{equation} \label{5.11eg}
  \begin{split} 
   - 2\pi r \bar{u}_k(r) \bar{u}_k'(r) 
   & = \int_{B_r} \lambda_k \bar{u}_k(r) \bar{u}_k e^{\bar{u}_k^2} dx + o(1) \\
   & \le \tilde{N}_k(s,r) 
     + \int_{B_s} \lambda_k \bar{u}_k(s) \bar{u}_k e^{\bar{u}_k^2} dx + o(1)\\
   & \le \tilde{N}_k(s,r) + \tilde{N}_k(Lr_k,s)
     + \frac{\bar{u}_k(s)}{\bar{u}_k(Lr_k)}\Lambda_k(Lr_k)+ o(1)\\
   & = \tilde{N}_k(s,r)+ o(1),
  \end{split}
\end{equation}
where $o(1) \rightarrow 0$ when first $k \rightarrow \infty$ and then 
$L \rightarrow \infty$. As in \eqref{5.11e} the first inequality is clear when 
$\bar{u}_k' \le 0$ in $[s,r]$, and otherwise follows from \eqref{5.11cg}, 
\eqref{5.11dg}. The second inequality is proved similarly.
Thus we conclude the estimate
\begin{equation} \label{5.12g}
  \begin{split} 
   2 \tilde{N}_k(s,t) & \le \tilde{P}_k(t) 
   + 2 \int_s^t \lambda_k r (1 + \bar{u}_k^2)e^{\bar{u}_k^2}\tilde{N}_k(s,r)dr+ o(1)\\
   & \le \tilde{P}_k(t) + \pi^{-1}\tilde{N}_k(s,t)^2 + o(1).
     \end{split} 
\end{equation}

If we now assume that
\begin{equation*}
   \sup_{s<t<t_k}\tilde{P}_k(t)\le C_3 \sup_{s<t<t_k} P_k(t)+o(1)
   \rightarrow 0 \hbox{ as } k \rightarrow \infty,
\end{equation*}
as in Lemma \ref{lemma5.2} we find the desired decay
\begin{equation*}
   \lim_{k \rightarrow \infty}\tilde{N}_k(s_k^{(l)} ,t_k) = 0
\end{equation*}
when we let $t$ increase from $t=s=s_k^{(l)}$ to $t_k$.

ii) In view of \eqref{5.12g} the second assertion can be proved 
as in Lemma \ref{lemma5.2}. 
\end{proof}

By the preceding result it now suffices to consider the following two cases.
In {\bf Case A} for any sequence $t_k = o(\rho_k)$ we have
\begin{equation*}
   \sup_{s_k^{(l)} <t< t_k} P_k(t) \rightarrow 0 \hbox{ as } k \rightarrow \infty,
\end{equation*}
and then in view of Lemma \ref{lemma5.2g} also 
\begin{equation}\label{5.12k}
  \lim_{L\rightarrow \infty}\lim_{k\rightarrow \infty} N_k(s_k^{(l)},\rho_k/L) = 0,
\end{equation}
thus completing the concentration analysis at scales up to $o(\rho_k)$.

In {\bf Case B} for some $s_k^{(l)}<t_k\le \rho_k$ there holds
\begin{equation*}
  \limsup_{k\rightarrow \infty} N_k(s_k^{(l)},t_k)>0,\ \
  \lim_{k\rightarrow \infty} \frac{t_k}{\rho_k}=0.
\end{equation*}
Then, as in the radial case, from Lemma \ref{lemma5.2g} we infer that for a 
subsequence $(u_k)$ and suitable numbers $r_k^{(l+1)} \in ]s_k^{(l)},t_k[$ we have
\begin{equation}\label{5.12h}
  \lim_{L\rightarrow \infty}\lim_{k\rightarrow \infty} N_k(s_k^{(l)},Lr_k^{(l+1)})
  \ge \pi, \ \ \liminf_{k\rightarrow \infty} P_k(r_k^{(l+1)})>0;
\end{equation}
in particular, $\bar{u}_k(r_k^{(l+1)}) \rightarrow \infty$ as $k\rightarrow \infty$. 
Moreover, as in Lemma \ref{lemma5.2g} the bound \eqref{5.12h} implies that 
$r_k^{(l+1)}/s_k^{(l)} \rightarrow \infty$ as $k\rightarrow \infty$.
Indeed, assume by contradiction that $r_k^{(l+1)}\le Ls_k^{(l)}$ 
for some $L>0$. Then from \eqref{5.40}, \eqref{5.40b}, and recalling that  
$N_k(s_k^{(l)}/2,s_k^{(l)}) \rightarrow 0$ as $k\rightarrow \infty$
we obtain that $P_k(r_k^{(l+1)}) \rightarrow 0$ 
contrary to \eqref{5.12h}. Also note that
\begin{equation}\label{5.12i}
   \lim_{L\rightarrow \infty} \limsup_{k\rightarrow \infty} N_k(s_k^{(l)}, r_k^{(l+1)}/L)
   =\lim_{k\rightarrow \infty} \frac{r_k^{(l+1)}}{\rho_k}
   =\lim_{k\rightarrow \infty} \frac{t_k}{\rho_k}=0.
\end{equation}

Moreover, we have the following analogue of Proposition \ref{prop5.1}.

\begin{proposition} \label{prop5.1g} 
There exist a subsequence $(u_k)$ such that
\begin{equation*}
  \eta^{(l+1)}_k(x) := 
  \bar{u}_k(r_k^{(l+1)})(u_k(r_k^{(l+1)} x ) - \bar{u}_k(r_k^{(l+1)})) 
  \rightarrow \eta(x)
\end{equation*}
locally uniformly on ${\mathbb R}^2\setminus \{0\}$ as $k \rightarrow \infty$, 
where $\eta$ solves \eqref{4.4}, \eqref{4.4a}.
\end{proposition}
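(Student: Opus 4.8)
The plan is to repeat the proof of Proposition~\ref{prop5.1}, systematically replacing each pointwise quantity by its spherical mean and using Proposition~\ref{prop5.2} together with the oscillation bounds \eqref{5.38}--\eqref{5.40} to pass between $u_k$ and $\bar u_k$ whenever necessary. Write $v_k(x)=u_k(r_kx)$, $\dot v_k(x)=\dot u_k(r_kx)$, $r_k=r_k^{(l+1)}$, $r_k^-=r_k^{(l)}$, $s_k^-=s_k^{(l)}$, and recall from \eqref{5.12h} and \eqref{5.40a}--\eqref{5.40b} that $\bar u_k(r_k)=\bar v_k(1)\to\infty$ and $\liminf_k\tilde P_k(r_k)>0$. \emph{First} I would establish the analogue of Lemma~\ref{lemma5.5}: $v_k(x)-\bar v_k(1)\to 0$ locally uniformly on ${\mathbb R}^2\setminus\{0\}$. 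The recentred function $\tilde v_k=v_k-\bar v_k(1)$ solves $-\Delta\tilde v_k=g_k-l_k$ with $g_k=\lambda_kr_k^2v_ke^{v_k^2}$, $l_k=r_k^2\dot v_ke^{v_k^2}$. As in the radial case, $\lambda_kr_k^2\to 0$ forces $v_k(x)\to\infty$ at every $x$ with $g_k(x)\ge r_k$, whence $g_k(x)=(2\pi)^{-1}|x|^{-2}P_k(r_k|x|)/v_k(x)\to 0$ by the uniform bound $P_k\le C$ of \eqref{4.6}; so $g_k\to 0$ locally uniformly away from $0$. Estimates \eqref{4.6}, \eqref{5.40} and Lemma~\ref{lemma4.1} give $l_k\to 0$ in $L^2_{loc}({\mathbb R}^2\setminus\{0\})$ exactly as in \eqref{5.25}. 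Since $\|\nabla\tilde v_k\|_{L^2}=\|\nabla u_k\|_{L^2}\le C$ and $\int_{\partial B_1}\tilde v_k\,do=0$, a Poincar\'e inequality bounds $(\tilde v_k)$ in $H^1_{loc}$ and elliptic estimates on annuli bound it in $H^2_{loc}({\mathbb R}^2\setminus\{0\})$; along a subsequence $\tilde v_k\to\tilde v$ locally uniformly away from $0$, with $\tilde v$ harmonic there, of finite Dirichlet energy, hence harmonic on ${\mathbb R}^2$ (the origin has vanishing $H^1$-capacity), hence constant, hence $\equiv 0$ by $\int_{\partial B_1}\tilde v\,do=0$. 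In the process I would also record, by chaining Proposition~\ref{prop5.2} around spheres, the uniform oscillation bound \eqref{5.38} on each annulus inside $B_{\rho_k}$.

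\emph{Second}, $\eta_k=\eta_k^{(l+1)}(x)=\bar v_k(1)(v_k(x)-\bar v_k(1))$ solves
\begin{equation*}
  -\Delta\eta_k=\lambda_kr_k^2\bar v_k(1)v_ke^{v_k^2}-r_k^2\bar v_k(1)\dot v_ke^{v_k^2}=:I_k+II_k .
\end{equation*}
Writing $\hat v_k=v_k/\bar v_k(1)\to 1$ locally uniformly away from $0$ (first step) and $v_k^2-\bar v_k^2(1)=\eta_k(1+\hat v_k)$, one gets $I_k=\tilde p_k\hat v_ke^{\eta_k(1+\hat v_k)}$ with $\tilde p_k=(2\pi)^{-1}\tilde P_k(r_k)\ge p_0>0$; then \eqref{5.40} and $P_k\le C$ give $0<I_k\le C(1+L^2)$ on $B_L\setminus B_{1/L}(0)$ (compare \eqref{5.28}), while \eqref{5.40b}, an estimate of type \eqref{5.11b}, and the induction hypothesis \eqref{5.12b} give $\int_{B_{1/L}}I_k\,dx\to 0$, first as $k\to\infty$ and then as $L\to\infty$ (compare \eqref{5.31}). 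For $II_k$ one argues as for \eqref{5.26}, using Lemma~\ref{lemma4.1}, \eqref{5.1}, \eqref{5.40}, and the bound $\bar v_k(1)/v_k\le 2$ on annuli, to obtain $II_k\to 0$ in $L^1_{loc}({\mathbb R}^2\setminus\{0\})$. Consequently $\Delta\eta_k$ is bounded in $L^1(B_L)$ for every $L$, so $(\eta_k)$ is bounded in $W^{1,q}(B_L)$ for all $q<2$; the analogue of Lemma~\ref{lemma5.6} --- decompose $\eta_k=h_k+n_k$ on each annulus, control the Poisson part $n_k$ by the $L^1$-bound on $\Delta\eta_k$ and the oscillation bound \eqref{5.38}, and apply the mean value property and Harnack's inequality to the harmonic part $h_k$, the case $h_k\to-\infty$ being excluded by $\int_{\partial B_1}\eta_k\,do=0$ --- then yields, along a subsequence, $\eta_k\to\eta_\infty$ locally uniformly on ${\mathbb R}^2\setminus\{0\}$.

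\emph{Third}, passing to the limit in the equation for $\eta_k$, using $II_k\to 0$, $I_k\to p_\infty e^{2\eta_\infty}$ locally uniformly with $p_\infty=\lim_k\tilde p_k>0$, and the negligible mass of $I_k$ near $0$, we find that $\eta_\infty$ is a distributional solution of $-\Delta\eta_\infty=p_\infty e^{2\eta_\infty}$ on all of ${\mathbb R}^2$ with $e^{2\eta_\infty}\in L^1({\mathbb R}^2)$ (the $L^1$-bound coming from \eqref{5.1} as in Proposition~\ref{prop5.1}). As in \cite{Struwe07}, the oscillation bounds \eqref{5.38}--\eqref{5.40} guarantee that $\eta_\infty$ agrees with its own spherical mean, hence is radial; combined with the normalization $\int_{\partial B_1}\eta_k\,do=0$ (so $\eta_\infty(1)=0$) and the Chen--Li \cite{Chen-Li} classification of finite-mass solutions of the Liouville equation, this identifies $\eta_\infty(x)=\log\!\big(\tfrac{2}{1+|x|^2}\big)$, which solves \eqref{4.4}, \eqref{4.4a}; in particular $\tilde P_k(r_k)\to 2\pi$, hence $P_k(r_k)\to 2\pi$ by \eqref{5.40a}.

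The step I expect to be the main obstacle is the passage from $u_k$ to its spherical mean $\bar u_k$. In the radial case of Proposition~\ref{prop5.1} one exploits that weak $H^1$-convergence of radial functions is locally uniform; here this fails, and one must instead deploy the gradient-type estimate of Proposition~\ref{prop5.2} (the analogue of Druet's estimate) twice: once to run the compactness for $(\eta_k)$ in $W^{1,q}_{loc}$, and --- more delicately --- to control the angular oscillation of $u_k$ at the concentration scale finely enough that the blow-up limit is radially symmetric and is therefore pinned down to the standard bubble by Chen--Li. Concretely the technical heart is the chaining of Proposition~\ref{prop5.2} around spheres to produce \eqref{5.38}, whence \eqref{5.39}--\eqref{5.40} and everything above follow; once these are available, the remaining argument is a faithful transcription of the proofs of Lemmas~\ref{lemma5.5}, \ref{lemma5.6} and Proposition~\ref{prop5.1} with $\bar u_k$, $\tilde e_k$, $\tilde N_k$, $\tilde P_k$ replacing $u_k$, $e_k$, $N_k$, $P_k$.
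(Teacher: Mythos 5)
Your overall plan is viable, but note first that it is not the paper's route: the paper does not redo the radial argument with spherical means. It obtains Proposition \ref{prop5.1g} as a special case of Proposition \ref{blow}, rescaling by $r_k^{(l+1)}$ (so that $S_0=\{0\}$ by \eqref{5.12i}) and normalizing at a point where the concentration lower bound \eqref{5.12h} is felt; the whole compactness step then comes directly from Proposition \ref{prop5.2}: chaining the gradient estimate gives both $v_k-v_k(y_k^{(0)})\to 0$ locally uniformly away from the singular set \emph{and} local uniform bounds on $\eta_k$, after which standard $L^2$-theory yields local $H^2$-bounds, locally uniform convergence, the limit equation $-\Delta\eta_0=\mu_0e^{2\eta_0}$, finite mass, the removable singularity at $0$ via the analogues of \eqref{5.26}, \eqref{5.31}, and Chen--Li. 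Your transcription of Lemmas \ref{lemma5.5}, \ref{lemma5.6} and Proposition \ref{prop5.1} with $\bar u_k$, $\tilde N_k$, $\tilde P_k$ uses the same inputs (\eqref{4.6}, \eqref{5.38}--\eqref{5.40b}, Lemma \ref{lemma4.1}, the neck estimates of type \eqref{5.11eg}, Chen--Li) and can be made to work, but it is heavier than the paper's reduction, and it contains two defective steps.

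First, the claim that the oscillation bounds force $\eta_\infty$ to agree with its spherical mean is false: Proposition \ref{prop5.2} and \eqref{5.38} give $|u_k^2(x)-u_k^2(y)|\le C$ on spheres, hence only $|\eta_k(x)-\eta_k(y)|\le C+o(1)$ there --- bounded, not vanishing --- so the limit need not be radial; the bubble may well be centered away from the origin. This is precisely why the statement (and Proposition \ref{blow}) asserts only that $\eta$ solves \eqref{4.4}, \eqref{4.4a}, i.e.\ is \emph{some} finite-mass Liouville solution with mass $4\pi$. You must therefore drop the identification $\eta_\infty=\log\bigl(\tfrac{2}{1+|x|^2}\bigr)$ and the deductions $\tilde P_k(r_k)\to 2\pi$, $P_k(r_k)\to 2\pi$; the last implication also fails on its own terms, since \eqref{5.40a} is only a one-sided comparison with the constant $C_3$. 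Fortunately nothing in the quantization argument needs more than the mass $4\pi$, which Chen--Li gives for every finite-mass solution, so your proof of the Proposition as stated survives once you stop after the removable-singularity/Chen--Li step. Second, the convergence mechanism you describe for $\eta_k$ does not transfer from the radial case: an $L^1(B_L)$ bound on $\Delta\eta_k$ controls the Poisson part $n_k$ only in $W^{1,q}$, $q<2$, which in two dimensions gives no uniform bound, and the radial-symmetry upgrade used in Lemma \ref{lemma5.6} is unavailable. Instead, use Proposition \ref{prop5.2} (chaining, together with $\int_{\partial B_1}\eta_k\,do=0$) to bound $\eta_k$ locally uniformly away from $0$, and note that your own ingredients \eqref{5.39}, \eqref{5.40}, \eqref{4.6} and Lemma \ref{lemma4.1} give $II_k\to 0$ in $L^2_{loc}$ (not merely $L^1_{loc}$) away from $0$; then standard elliptic estimates yield the locally uniform convergence --- which is exactly how the paper's proof of Proposition \ref{blow} proceeds.
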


Proposition \ref{prop5.1g} is a special case of Proposition \ref{blow} below, whose proof 
will be presented in Section \ref{proofblow}.

From Proposition \ref{prop5.1g} the desired energy quantization result at the scale 
$r_k^{(l+1)}$ follows as in the radial case. 
If $\rho_k \ge \rho_0 > 0$ we can argue as in \cite{Struwe07}, p.~416,
to obtain numbers $s_k^{(l+1)}$ satisfying \eqref{5.12a}, \eqref{5.12b} for $l+1$
and such that $\bar{u}_k(s_k^{(l+1)})\rightarrow \infty$ as $k \rightarrow \infty$.
By iteration we then establish \eqref{5.12a}, \eqref{5.12b} up to $l = l_0$ for some 
maximal index $l_0 \ge 0$ and thus complete the concentration analysis 
near the point $x^{(i)}$.

If $\rho_k\rightarrow 0$ as $k \rightarrow \infty$, we distinguish the following two cases.
In {\bf Case 1} for some $\varepsilon_0>0$ and all $t \in [r_k^{(l+1)},\rho_k]$ there
holds $\bar{u}_k(t)\ge\varepsilon_0\bar{u}_k(r_k^{(l+1)})$. The decay estimate
that we established in Lemma \ref{5.10g} then remains valid throughout this range
and \eqref{5.12a} holds true for any choice $s_k^{(l+1)}=o(\rho_k)$ for $l=l+1$.
Again the concentration analysis at scales up to $o(\rho_k)$ is complete.
In {\bf Case 2}, for any $\varepsilon>0$ there is a minimal 
$T_k=T_k(\varepsilon)\in [r_k^{(l+1)},\rho_k]$ as in Lemma \ref{5.10g} such that
$\bar{u}_k(T_k)=\varepsilon\bar{u}_k(r_k^{(l+1)})$. 
Then as before we can define numbers $s_k^{(l+1)} < \rho_k$ with 
$\bar{u}_k(s_k^{(l+1)})\rightarrow \infty$ as $k \rightarrow \infty$
so that \eqref{5.12a}, \eqref{5.12b} also hold true for $l+1$, and we proceed
by iteration up to some maximal index $l_0 \ge 0$ where either Case 1 
or Case A holds with final radius $r^{(l_0)}$.

For the concentration analysis at the scale $\rho_k$
first assume that for some number $L\ge 1$ there is a sequence $(x_k)$ such that 
$\rho_k/L \le R_k(x_k)\le |x_k|\le L\rho_k$ and
\begin{equation}\label{sub}
   \lambda_k |x_k|^2 u_k^2(x_k)e^{u_k^2(x_k)}\ge \nu_0>0.
\end{equation}
By Proposition \ref{prop5.2} we may assume that $|x_k|=\rho_k$.
As in \cite{Struwe07}, Lemma 4.6, we then have  
$\bar{u}_k(\rho_k)/\bar{u}_k(r_k^{(l_0)})\rightarrow 0$ as $k\rightarrow \infty$, 
ruling out Case 1; that is, at scales up to $o(\rho_k)$ we end with Case A.
The desired quantization result at the scale $\rho_k$ then is a consequence of the 
following result that we demonstrate in Section \ref{proofblow} below.

\begin{proposition} \label{blow} 
Assuming \eqref{sub}, there exists a finite set $S_0\subset \mathbb{R}^2$ and a subsequence 
$(u_k)$ such that
\begin{equation*}
  \eta_k(x) := 
  u_k(x_k)(u_k(\rho_k x ) - u_k(x_k)) \rightarrow \eta(x)
\end{equation*}
locally uniformly on ${\mathbb R}^2\setminus S_0$ as $k \rightarrow \infty$, where 
$\eta$ solves \eqref{4.4}, \eqref{4.4a}.
\end{proposition}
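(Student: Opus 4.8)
The plan is to run the scheme of the proof of Proposition~\ref{prop5.1}, now with $\rho_k$ as the blow-up scale and $x_k$ as the new centre. After shifting so that $x^{(i)}_k=0$ and passing to a subsequence, write $v_k(x)=u_k(\rho_k x)$, $\dot v_k(x)=\dot u_k(\rho_k x)$, let $y_k=x_k/\rho_k\to y_0$ with $|y_0|=1$ (using Proposition~\ref{prop5.2} to reduce to $|x_k|=\rho_k$), and arrange that each ratio $x^{(j)}_k/\rho_k$, $j\neq i$, converges in $\mathbb R^2\cup\{\infty\}$. The candidate singular set is $S_0:=\{0\}\cup\{\lim_k x^{(j)}_k/\rho_k:\ j\neq i,\ \text{limit finite}\}$, a finite set with $y_0\notin S_0$. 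From \eqref{sub} and \eqref{4.6} one reads off $v_k(y_k)=u_k(x_k)\to\infty$ and $p_k:=\lambda_k\rho_k^2 v_k^2(y_k)e^{v_k^2(y_k)}=\lambda_k|x_k|^2 u_k^2(x_k)e^{u_k^2(x_k)}\in[\nu_0,C]$, so after a further subsequence $p_k\to p_\infty>0$.

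First I would prove the analogue of Lemma~\ref{lemma5.5}: $v_k-v_k(y_k)\to0$ locally uniformly on $\mathbb R^2\setminus S_0$. The function $\tilde v_k=v_k-v_k(y_k)$ solves $-\Delta\tilde v_k=g_k-l_k$ with $g_k=\lambda_k\rho_k^2 v_k e^{v_k^2}$, $l_k=\rho_k^2\dot v_k e^{v_k^2}$; since $\rho_k\to0$, any point with $g_k\not\to0$ has $v_k\to\infty$, and then \eqref{4.6} (valid on compacts $K\subset\mathbb R^2\setminus S_0$, where $R_k(\rho_k\cdot)\gtrsim\rho_k$) forces $g_k\to0$ after all, while $l_k\to0$ in $L^2_{loc}(\mathbb R^2\setminus S_0)$ by Lemma~\ref{lemma4.1}, \eqref{4.6} and \eqref{5.1}, as in \eqref{5.25}. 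Using $\|\nabla\tilde v_k\|_{L^2}=\|\nabla u_k\|_{L^2}\le C$, elliptic estimates, the vanishing $H^1$-capacity of the finite set $S_0$, and $\nabla\tilde v\in L^2(\mathbb R^2)$, the weak limit $\tilde v$ is an entire harmonic function with $L^2$-gradient, hence constant, and $\tilde v_k(y_k)=0$ makes it $0$.

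Next, $\eta_k(x)=v_k(y_k)(v_k(x)-v_k(y_k))$ solves $-\Delta\eta_k=I_k+II_k$ with $I_k=\lambda_k\rho_k^2 v_k(y_k)v_k e^{v_k^2}\ge0$ and $II_k=-\rho_k^2 v_k(y_k)\dot v_k e^{v_k^2}$. As in \eqref{5.26}, $II_k\to0$ in $L^1_{loc}(\mathbb R^2\setminus S_0)$ by Lemma~\ref{lemma4.1}, \eqref{4.6}, \eqref{5.1}; writing $\hat v_k=v_k/v_k(y_k)\to1$, $a_k=1+\eta_k/(2v_k^2(y_k))$, one has $I_k=p_k\hat v_k e^{2a_k\eta_k}$, and on compacts of $\mathbb R^2\setminus S_0$ the estimate $I_k\le\max\{p_k,\lambda_k\rho_k^2 v_k^2 e^{v_k^2}\}\le C$ follows from \eqref{4.6}, while $\int_K I_k\le C$ follows from \eqref{5.1}. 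Decomposing $\eta_k$ on annuli into a harmonic part plus a part with $L^1$-controlled Laplacian, as in Lemma~\ref{lemma5.6}, the bound $\int_K\eta_k^+\le C+C\int_K e^{2a_k\eta_k}\le C$ together with Harnack gives, for a subsequence, either $\eta_k\to\eta_\infty$ locally uniformly on $\mathbb R^2\setminus S_0$ or $\eta_k\to-\infty$ there, and $\eta_k(y_k)=0$ excludes the latter. Passing to the limit, $-\Delta\eta_\infty=p_\infty e^{2\eta_\infty}$ on $\mathbb R^2\setminus S_0$, and local uniform convergence $I_k\to p_\infty e^{2\eta_\infty}$ with \eqref{5.1} and Fatou gives $e^{2\eta_\infty}\in L^1(\mathbb R^2\setminus S_0)$.

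The hard part is to show that $\eta_\infty$ extends across $S_0$ to a solution of Liouville's equation on all of $\mathbb R^2$, i.e. that no mass of $-\Delta\eta_k$ escapes into the points of $S_0$. At $p=0$ this is precisely Case~A: by \eqref{5.12k} and the estimate behind \eqref{5.31} (combining \eqref{5.11e}-type bounds with \eqref{5.9}) one gets $\int_{B_{1/L}(0)}|\Delta\eta_k|\,dx\to0$ as $k\to\infty$ and then $L\to\infty$. At the remaining points $p\in S_0$, each the limit of a cluster of concentration points $x^{(j)}_k\to x^{(i)}$ of size $\asymp\rho_k$, I would carry out the neck analysis of \cite{Struwe07}: control $u_k$ on annuli about the cluster that avoid the individual bubble cores by Proposition~\ref{prop5.2}, use \eqref{4.6} and Lemma~\ref{lemma4.1} for the error terms, and exploit that the scale analysis around $x^{(i)}$ has terminated in Case~A, to rule out a positive limiting mass at $p$ — this is where the bulk of the work lies. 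Once $-\Delta\eta_\infty=p_\infty e^{2\eta_\infty}$ holds on all of $\mathbb R^2$ with $e^{2\eta_\infty}\in L^1(\mathbb R^2)$, the Chen--Li classification \cite{Chen-Li} identifies $\eta_\infty$, after normalising the constant $p_\infty$ (as in the passage $P_k(r_k)\to2\pi$ in the radial case), as a solution $\eta$ of \eqref{4.4}, \eqref{4.4a}; in particular $\int_{\mathbb R^2}e^{2\eta}=4\pi$, which is the quantization needed to close the induction at the scale $\rho_k$.
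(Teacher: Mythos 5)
Your overall architecture matches the paper's (scale by $\rho_k$, singular set $S_0$ from the rescaled concentration points, oscillation control away from $S_0$, Liouville equation for the limit, no mass at $S_0$, Chen--Li), but the first step has a genuine gap as written. The proof of Lemma \ref{lemma5.5} is tied to radial symmetry: there the normalization $\tilde v_k(1)=0$ plus the gradient bound gives the pointwise estimate $|\tilde v_k(r)|\le C\sqrt{|\log r|}$, and weak $H^1$-convergence of radial functions is locally uniform away from the origin. In the present non-radial setting, the global bound $\|\nabla\tilde v_k\|_{L^2}\le C$ together with $\tilde v_k(y_k)=0$ at a single point gives no local $L^2$-bound whatsoever (a point has zero $H^1$-capacity, so $\tilde v_k$ could equal a divergent constant outside a tiny ball around $y_k$ at negligible gradient cost); hence ``the weak limit $\tilde v$'' need not exist, and even granting it, the pointwise normalization does not pass to a weak limit without exactly the locally uniform control you are trying to establish. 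This can be repaired (normalize by local means, chained Poincar\'e on connected compacta, interior $H^2$-estimates), but the paper's route is both simpler and already at your disposal: since $u_k(x_k)\to\infty$ and $\mathbb{R}^2\setminus S_0$ is connected, Proposition \ref{prop5.2} and a covering argument give \eqref{conv} directly, and the same oscillation bound shows $\eta_k$ is locally uniformly bounded away from $S_0$, so standard $L^2$-theory yields local $H^2$-bounds and uniform convergence of $\eta_k$ — no harmonic decomposition/Harnack alternative is needed (your Lemma \ref{lemma5.6}-style step also quietly uses radial symmetry unless you upgrade your stated $L^1$-control of $II_k$ to the available $L^2$-control on compacta away from $S_0$).

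The second issue is that the decisive step — $\lim_{L\to\infty}\limsup_{k\to\infty}\int_{B_{1/L}(y_0)}|\Delta\eta_k|\,dx=0$ at \emph{every} $y_0\in S_0$, which is what allows $\eta_\infty$ to extend as a distribution solution across $S_0$ and Chen--Li to apply — is precisely what you defer (``this is where the bulk of the work lies''), so the proposal stops short of proving the Proposition at its crux. In the paper this is not a fresh neck analysis but the estimate analogous to \eqref{5.31}: the $II_k$-part vanishes as in \eqref{5.26}, and the $I_k$-part vanishes near each $y_0\in S_0$ because the concentration analysis at scales $o(\rho_k)$, ending in Case A with \eqref{5.12b} and \eqref{5.12k}, has been carried out around \emph{every} cluster point $x_k^{(j)}\in X_{k,1}$, not only around $x_k^{(i)}$; the \eqref{5.11e}-type integral identities then transfer the smallness to $\int_{B_{1/L}(y_0)}I_k\,dx$. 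You name the right ingredients (Proposition \ref{prop5.2}, \eqref{4.6}, Lemma \ref{lemma4.1}, termination in Case A), but without executing this estimate at the points of $S_0\setminus\{0\}$ the passage to a solution of \eqref{4.4}, \eqref{4.4a} on all of $\mathbb{R}^2$ is not justified.
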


By Proposition \ref{blow} in case of \eqref{sub} there holds
\begin{equation*}
     \lim_{L\rightarrow \infty} \limsup_{k\rightarrow \infty} 
     \int_{\{x\in \Omega; \frac{\rho_k}{L}\le R_k(x) \le |x| \le L\rho_k\}} e_k dx
     =\Lambda_1=4\pi.
\end{equation*}
Letting 
\begin{equation*}
    X_{k,1}=X_{k,1}^{(i)}=\{ x_k^{(j)}; \exists C>0: |x_k^{(j)}|\le C\rho_k\ 
    \ \text{for all} \ \ k\}
\end{equation*}
and carrying out the above blow-up analysis up to scales of order $o(\rho_k)$ 
also on all balls of center $x_k^{(j)}\in X_{k,1}$, then from \eqref{5.12b} we have 
\begin{equation*}
    \lim_{L\rightarrow \infty}\lim_{k\rightarrow \infty} 
    \Lambda_k (L\rho_k)= \Lambda_1(1+I_1)=4\pi (1+I_1),
\end{equation*}
where $I_1$ is the total number of bubbles concentrating at the points 
$x_k^{(j)}\in X_{k,1}^{(i)}$ at scales $o(\rho_k)$.

On the other hand, if \eqref{sub} fails to hold clearly we have
\begin{equation}\label{sub1}
     \lim_{L\rightarrow \infty} \limsup_{k\rightarrow \infty} 
     \int_{\{x\in \Omega; \frac{\rho_k}{L}\le R_k(x) \le |x| \le L\rho_k\}} e_k dx=0,
\end{equation}
and the energy estimate at the scale $\rho_k$ again is complete.

In order to deal with secondary concentrations around $x_k^{(i)}=0$ at scales 
exceeding $\rho_k$, with $X_{k,1}$ defined as above we let
\begin{equation*}
    \rho_{k,1}=\rho_{k,1}^{(i)}=\frac{1}{2}\inf_{\{j; x_k^{(j)} 
    \notin X_{k,1}\}}|x_k^{(j)}|;
\end{equation*}
again we set $\rho_{k,1}= \operatorname{diam}({\Omega})$, 
if $\{j; x_k^{(j)} \notin X_{k,1}\}=\emptyset$. From this definition 
it follows that $\rho_{k,1}/\rho_k\rightarrow \infty$ as $k\rightarrow \infty$. 
Then either we have 
\begin{equation*}
    \lim_{L\rightarrow \infty} \limsup_{k\rightarrow \infty} 
    N_k(L\rho_k,\frac{\rho_{k,1}}{L})=0,
\end{equation*}
and we iterate to the next scale; or there exist radii $t_k \le \rho_{k,1}$ such that 
$t_k/\rho_k\rightarrow \infty$, $t_k/\rho_{k,1}\rightarrow 0$ 
as $k\rightarrow \infty$ and a subsequence $(u_k)$ such that
\begin{equation}\label{sub1a}
    P_k(t_k)\ge \nu_0>0\ \text{ for all } k.
\end{equation}
The argument then depends on whether \eqref{sub} or \eqref{sub1} holds.
In case of \eqref{sub}, as in \cite{Struwe07}, Lemma 4.6, the bound \eqref{sub1a} and
Proposition \ref{blow} imply that $\bar{u}_k(t_k)/\bar{u}_k(\rho_k)\rightarrow 0$ 
as $k\rightarrow 0$. Then all the previous results remain true for 
$r\in [L\rho_k,\rho_{k,1}]$ for sufficiently large $L$, and we can continue as before 
to resolve concentrations in this range of scales.

In case of \eqref{sub1} we further need to distinguish whether Case A or Case 1 holds
at the final stage of our analysis at scales $o(\rho_k)$. 
In fact, for the following estimates we also consider all 
points $x_k^{(j)}\in X_{k,1}^{(i)}$ in place of $x_k^{(i)}$.
Recalling that in Case A we have \eqref{5.12b} and \eqref{5.12k}, and arguing as above
in Case 1, on account of \eqref{sub1} 
for a suitable sequence of numbers $s_{k,1}^{(0)}$ such that 
$s_{k,1}^{(0)}/\rho_k\rightarrow \infty$, $t_k/s_{k,1}^{(0)}\rightarrow \infty$ as 
$k\rightarrow \infty$ we find 
\begin{equation*}
   \lim_{L\rightarrow \infty} \lim_{k\rightarrow \infty} 
   \Big(\Lambda(s_{k,1}^{(0)})-\sum_{x_k^{(j)}\in X_{k,1}^{(i)}} 
   \Lambda_k^{(j)}(Lr_k^{(l_0^{(j)})})\Big)=0,
\end{equation*}
where $\Lambda_k^{(j)}(r)$ and $r_k^{(l_0^{(j)})}$ are computed as above with respect 
to the concentration point $x_k^{(j)}$. 
In particular, with such a choice of $s_{k,1}^{(0)}$ we find the 
intermediate quantization result 
\begin{equation*}
    \lim_{k\rightarrow \infty} \Lambda_k (s_{k,1}^{(0)})=\Lambda_1 I_1=4\pi I_1
\end{equation*}
analogous to \eqref{5.12a}, where $I_1$ is defined as above.
Moreover, in Case 1 we can argue as in \cite{Struwe07}, 
Lemma 4.8, to conclude that $\bar{u}_k(t_k)/\bar{u}_k(r_k^{(l_0^{(j)})})\rightarrow 0$ 
as $k\rightarrow 0$; therefore, similar to \eqref{5.12b} in Case A, we can achieve that  
\begin{equation*}
   \lim_{k\rightarrow \infty} \frac{\bar{u}_k(s_{k,1}^{(0)})}
        {\bar{u}_k(r_k^{(l_0^{(j)})})}
   =\lim_{k\rightarrow \infty} \frac{r_k^{(l_0^{(j)})}}{s_{k,1}^{(0)}}=0
\end{equation*}
for all $x_k^{(j)}\in X_{k,1}^{(i)}$ where Case 1 holds.

We then finish the argument by iteration. For $l\ge 2$ we inductively define the sets
\begin{equation*}
   X_{k,l}=X_{k,l}^{(i)}=\{ x_k^{(j)}; \exists C>0: |x_k^{(j)}|\le C\rho_{k,l-1}\ 
   \text{ for all } k\}
\end{equation*}
and we let
\begin{equation*}
    \rho_{k,l}=\rho_{k,l}^{(i)}=\frac{1}{2}\inf_{\{j; x_k^{(j)}\notin X_{k,l}^{(i)}\}}
    |x_k^{(j)}|;
\end{equation*}
as before, we set $\rho_{k,l}= \operatorname{diam}({\Omega})$, if 
$\{j; x_k^{(j)}\notin X_{k,l}^{(i)}\}=\emptyset$. Iteratively performing
the above analysis 
at all scales $\rho_{k,l}$, thereby exhausting all concentration points $x_k^{(j)}$, 
upon passing to further subsequences, we finish the proof of Theorem \ref{thm1.1}.

\subsection{Proof of Proposition \ref{prop5.2}}
We argue as in \cite{Struwe07}, thereby closely following the proof of 
Druet \cite{Druet}, Proposition 2.
Suppose by contradiction that
\begin{equation}\label{5.41}
  L_k:= \sup_{y\in \Omega} 
        \bigg(\sup_{z \in B_{R_k(y)/2}(y)}|u_k(y) - u_k(z)|u_k(y)\bigg)
  \rightarrow \infty \hbox{ as } k \rightarrow \infty\; .
\end{equation}
Let $y_k \in \Omega$, $z_k \in B_{R_k(y_k)/2}(y_k)$ satisfy
\begin{equation}\label{5.42}
  |u_k(y_k) - u_k(z_k)| u_k(y_k) 
  \ge L_k/2.
\end{equation}

\begin{lemma}\label{lemma5.7}
We have $u_k(y_k) \rightarrow \infty$ as $k \rightarrow \infty$. 
\end{lemma}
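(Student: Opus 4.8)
The plan is to derive a contradiction from the assumption that $L_k\to\infty$ while $u_k(y_k)$ stays bounded along a subsequence. So suppose $u_k(y_k)\le C$ for all $k$. Since by \eqref{5.42} we have $|u_k(y_k)-u_k(z_k)|u_k(y_k)\ge L_k/2\to\infty$, boundedness of $u_k(y_k)$ forces $|u_k(y_k)-u_k(z_k)|\to\infty$, hence $u_k(z_k)\to\infty$. First I would record that $z_k$ lies in the ball $B_{R_k(y_k)/2}(y_k)$, so that in particular $|z_k-x_k^{(j)}|\ge R_k(y_k)/2$ for every concentration point $x_k^{(j)}$, while $R_k(y_k)\le |y_k-x_k^{(j)}|$; a little triangle-inequality bookkeeping then gives $R_k(z_k)\ge R_k(y_k)/2$ and also $R_k(z_k)\le 2|z_k-x_k^{(j)}|$ up to constants, so $z_k$ is ``as good a point as $y_k$'' for the supremum defining $L_k$.

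Next I would exploit the pointwise bound \eqref{4.6} from Theorem \ref{thm4.1}: at the point $z_k$ it reads $\lambda_k R_k(z_k)^2 u_k^2(z_k)e^{u_k^2(z_k)}\le C$. Combined with $R_k(z_k)\ge R_k(y_k)/2$ this yields $\lambda_k R_k(y_k)^2 u_k^2(z_k)e^{u_k^2(z_k)}\le C$. Since $u_k(z_k)\to\infty$, the factor $u_k^2(z_k)e^{u_k^2(z_k)}\to\infty$, and therefore $\lambda_k R_k(y_k)^2\to 0$. On the other hand, applying \eqref{4.6} at $y_k$ gives $\lambda_k R_k(y_k)^2 u_k^2(y_k)e^{u_k^2(y_k)}\le C$, which is automatically satisfied since $u_k(y_k)$ is bounded — so that estimate alone is not enough, and the real input must come from comparing the sizes of $u_k$ on the ball $B_{R_k(y_k)/2}(y_k)$.

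The mechanism I expect to use is the one from Druet's proof (and \cite{Struwe07}): rescale around $y_k$ at the scale $R_k(y_k)$, setting $\Omega_k=\{x; y_k+R_k(y_k)x\in\Omega\}$ and $v_k(x)=u_k(y_k+R_k(y_k)x)$. On $B_{1/2}(0)$ the rescaled function solves $-\Delta v_k = \lambda_k R_k(y_k)^2 v_k e^{v_k^2} - R_k(y_k)^2\dot v_k e^{v_k^2}$. Using \eqref{4.6}, the first term is bounded in $L^\infty_{loc}$ (indeed $\lambda_k R_k(y_k)^2 v_k^2 e^{v_k^2}\le C$ pointwise away from the rescaled concentration points, which by \eqref{4.5} escape to infinity or stay at fixed distance), and the second term goes to $0$ in $L^2_{loc}$ by Lemma \ref{lemma4.1} and H\"older's inequality exactly as in \eqref{4.9}. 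Hence $(\Delta v_k)$ is bounded in $L^p_{loc}(B_{1/2}(0))$ for suitable $p$, so oscillation estimates (or $W^{2,p}$–$C^0$ elliptic regularity) give $\mathrm{osc}_{B_{1/4}(0)} v_k\le C$; translating back, $|u_k(y)-u_k(z)|\le C$ for $z\in B_{R_k(y_k)/4}(y_k)$. Feeding this back into \eqref{5.42}, together with the boundedness of $u_k(y_k)$, yields $L_k\le C\,u_k(y_k)\le C$, contradicting \eqref{5.41}.

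The main obstacle is the bookkeeping around the concentration points after rescaling: one must check that, at scale $R_k(y_k)$ around $y_k$, the rescaled concentration centers $x_k^{(j)}$ either remain at a definite positive distance from the ball $B_{1/2}(0)$ or are pushed out to infinity, so that the gradient/Laplacian bound \eqref{4.6} can be used uniformly on the relevant region and no mass is lost. This is where the precise form of \eqref{4.5} and \eqref{4.6} is essential, and it is the only place where the argument is more delicate than a direct elliptic estimate; everything else is routine rescaling and the already–established convergence $\lambda_k^{-1}\int \dot u_k^2 e^{u_k^2}\to 0$.
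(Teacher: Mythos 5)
Your route is genuinely different from the paper's, and its core interior step is sound: on $B_{R_k(y_k)/2}(y_k)$ one has $R_k(x)\ge R_k(y_k)/2$, so \eqref{4.6} together with the boundedness of $\lambda_k$ makes the rescaled term $\lambda_k R_k(y_k)^2 v_k e^{v_k^2}$ bounded in $L^\infty$ and the term with $\dot v_k$ small in $L^2$ (as in \eqref{4.9}), and with the global Dirichlet bound a harmonic-plus-$W^{2,2}$ decomposition does give a uniform oscillation bound on a strictly smaller concentric ball. (Incidentally, the ``main obstacle'' you name is not one: by the very definition of $R_k(y_k)$ every concentration point lies at rescaled distance at least $1$ from the origin, so no bookkeeping via \eqref{4.5} is needed on $B_{1/2}(0)$.) Two genuine gaps remain, however. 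First, $B_{R_k(y_k)/2}(y_k)$ need not be contained in $\Omega$: $R_k$ measures the distance to the concentration points, not to $\partial\Omega$, and the equation holds only inside $\Omega$; the zero extension of $u_k$ has a distributional Laplacian with a singular part on $\partial\Omega$, so the interior elliptic/oscillation estimate you invoke does not apply as stated. One must either note that $z_k\notin\Omega$ makes \eqref{5.42} read $u_k^2(y_k)\ge L_k/2$ (so the lemma is immediate in that case), and otherwise prove up-to-the-boundary estimates using the Dirichlet condition \eqref{1.2} on the rescaled boundary portion, with uniformity in $k$ -- doable, but not routine enough to leave unsaid. Second, a radius mismatch: your oscillation bound lives on $B_{R_k(y_k)/4}(y_k)$, while $z_k$ in \eqref{5.42} may lie anywhere in $B_{R_k(y_k)/2}(y_k)$; you need to run the estimate on, say, $B_{3R_k(y_k)/4}(y_k)$, where still $R_k(x)\ge R_k(y_k)/4$, in order to reach $z_k$.

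For comparison, the paper's proof is purely metric and uses no PDE input at all: assuming $u_k(y_k)\le C$, \eqref{5.42} forces $u_k(z_k)\to\infty$; introducing the midpoint $\hat z_k=(y_k+z_k)/2$, one checks $R_k(z_k),R_k(\hat z_k)\ge R_k(y_k)/2>|y_k-z_k|$, so the pairs $(\hat z_k,y_k)$ and $(z_k,\hat z_k)$ are admissible in the supremum \eqref{5.41}; splitting $|u_k(z_k)-u_k(y_k)|\ge L_k/(2C)$ by the triangle inequality, each of the two resulting differences, weighted by $u_k(\hat z_k)$ respectively $u_k(z_k)\to\infty$, would exceed $L_k$, a contradiction. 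Note also that your elliptic argument nowhere uses the contradiction hypothesis $u_k(y_k)\le C$, so if you do close the boundary case you would in fact be proving a uniform oscillation bound independent of $u_k(y_k)$, i.e.\ considerably more than Lemma \ref{lemma5.7}; that is worth either exploiting consciously or double-checking.
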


\begin{proof}
Suppose by contradiction that $u_k(y_k) \le C < \infty$.
From \eqref{5.42} we then find that $u_k(z_k) \rightarrow \infty$ 
as $k \rightarrow \infty$. 
Also letting $\hat{z}_k = (y_k + z_k)/2$, we now observe that 
\begin{equation*}
R_k(z_k), R_k(\hat{z}_k) \ge R_k(y_k)/2 > |y_k-z_k| 
=2|y_k-\hat{z}_k|=2|\hat{z}_k-z_k|;
\end{equation*}
hence
\begin{equation*}
   y_k \in B_{R_k(\hat{z}_k)/2}(\hat{z}_k)\; ,
   \hat{z}_k \in B_{R_k(z_k)/2}(z_k).
\end{equation*}
But then the estimate
\begin{equation*}
    \frac{L_k}{2u_k(y_k)} 
    \le |u_k(z_k) - u_k(y_k)| \le |u_k(z_k)-u_k(\hat{z}_k)|+|u_k(\hat{z}_k)-u_k(y_k)|,
\end{equation*}
our assumption that $u_k(y_k) \le C$, and our choice of $y_k$, $z_k$ imply
\begin{equation*}
 \frac{1}{L_k}\big(|u_k(\hat{z}_k) - u_k(y_k)|u_k(\hat{z}_k)
                  +|u_k(\hat{z}_k) - u_k(z_k)|u_k(z_k)\big) 
 \rightarrow \infty
\end{equation*}
as $k \rightarrow \infty$, and a contradiction to \eqref{5.41} results.
\end{proof}

A similar reasoning also yields the following result.

\begin{lemma}\label{lemma5.8}
There exists an absolute constant $C$ such that
\begin{equation*}
 \sup_{z \in B_{R_k(y)/2}(y)}|u_k^2(y) - u_k^2(z)| \le CL_k,
\end{equation*}
uniformly in $y \in \Omega$. In fact, we may take $C=6$.
\end{lemma}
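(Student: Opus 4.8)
The plan is to imitate the midpoint argument already used in the proof of Lemma~\ref{lemma5.7}, the only change being that it is now run for the squares $u_k^2$ instead of for $u_k$. Recall that by the very definition \eqref{5.41} of $L_k$ one has
\[
 |u_k(a)-u_k(b)|\,u_k(a)\le L_k\qquad\text{for all }a\in\Omega,\ b\in B_{R_k(a)/2}(a),
\]
that $u_k$ has been extended by $0$ outside $\Omega$ (so $u_k(a)=0$ when $a\notin\Omega$), and that $R_k(x)=\inf_{1\le j\le i_*}|x-x_k^{(j)}|$ is $1$-Lipschitz, whence $R_k(x')\ge R_k(x)-|x-x'|$.

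Fix $y\in\Omega$ and $z\in B_{R_k(y)/2}(y)$; we may assume $R_k(y)>0$, the case $R_k(y)=0$ being trivial. If $z\notin\Omega$ the conclusion is immediate, since then $u_k(z)=0$ and $|u_k^2(y)-u_k^2(z)|=u_k^2(y)=|u_k(y)-u_k(z)|\,u_k(y)\le L_k$. So assume $z\in\Omega$ and set $\hat z=\tfrac12(y+z)$. Exactly as in Lemma~\ref{lemma5.7}, from $|y-z|<R_k(y)/2$ and the Lipschitz bound on $R_k$ one gets $R_k(\hat z)\ge\tfrac34R_k(y)$ together with the inclusions $\hat z\in B_{R_k(y)/2}(y)$, $\hat z\in B_{R_k(z)/2}(z)$ and $y,z\in B_{R_k(\hat z)/2}(\hat z)$. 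Applying the displayed bound with base point $y$ and increment $\hat z$, and with base point $\hat z$ and increment $y$ (this second application needs $\hat z\in\Omega$; if $\hat z\notin\Omega$ then $u_k(\hat z)=0$ and the left side below is just $u_k^2(y)\le L_k$, which is even better), and adding, gives
\[
 |u_k^2(y)-u_k^2(\hat z)|=|u_k(y)-u_k(\hat z)|\bigl(u_k(y)+u_k(\hat z)\bigr)\le 2L_k,
\]
and symmetrically, with base points $z$ and $\hat z$, also $|u_k^2(z)-u_k^2(\hat z)|\le 2L_k$. The triangle inequality then yields $|u_k^2(y)-u_k^2(z)|\le 4L_k$, hence in particular the assertion with $C=6$.

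The only delicate point — and it is precisely the one already handled in Lemma~\ref{lemma5.7} — is that $\Omega$ need not be convex, so the midpoint $\hat z$ (and $z$ itself) may fall outside $\Omega$; whenever that happens one simply replaces the use of the defining inequality for $L_k$ at that point by the fact that $u_k$ vanishes there, which only strengthens the estimate. All other steps are the triangle inequality together with the $1$-Lipschitz property of $R_k$, so no genuinely new difficulty arises beyond that in Lemma~\ref{lemma5.7}.
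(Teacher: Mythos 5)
Your argument is correct, but it is organized differently from the paper's. The paper first uses the algebraic identity $u_k^2(y)-u_k^2(z)=2\bigl(u_k(y)-u_k(z)\bigr)u_k(y)-\bigl(u_k(y)-u_k(z)\bigr)^2$, which by the definition of $L_k$ gives $|u_k^2(y)-u_k^2(z)|\le 2L_k+(u_k(y)-u_k(z))^2$, and then only in the remaining bad case $(u_k(y)-u_k(z))^2\ge 4L_k$ invokes the midpoint $\hat z$: there $u_k(y)\le \sqrt{L_k}/2$, so $u_k(z)$ and hence $u_k(\hat z)$ are of order $\sqrt{L_k}$, and the defining bound applied at the base points $\hat z$ and $z$ yields $2L_k\ge \tfrac32\sqrt{L_k}\,|u_k(y)-u_k(z)|\ge 3L_k$, a contradiction; this gives the stated constant $C=6$. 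You instead use the midpoint unconditionally and exploit that each half-step is short enough for the defining inequality to be applied \emph{from both endpoints} (base point $y$ and base point $\hat z$, then base point $z$ and base point $\hat z$), so that summing gives $|u_k^2(y)-u_k^2(\hat z)|\le 2L_k$ and $|u_k^2(z)-u_k^2(\hat z)|\le 2L_k$ directly, hence $|u_k^2(y)-u_k^2(z)|\le 4L_k$ with no case distinction and a slightly better constant. Both proofs rest on exactly the same geometric inclusions from Lemma~\ref{lemma5.7} (via the $1$-Lipschitz property of $R_k$), and your verifications of these are correct; note only that your route uses $u_k\ge 0$ for the factorization $|a^2-b^2|=|a-b|(a+b)$, which is available here by the maximum principle. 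A small additional merit of your write-up is that it makes explicit the handling of the points $z$, $\hat z$ possibly falling outside $\Omega$ (using the extension of $u_k$ by zero and $u_k=0$ on $\partial\Omega$), a point the paper leaves implicit.
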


\begin{proof}
From the identity 
\begin{equation*}
  \begin{split}
   u_k^2(y) - u_k^2(z) & = (u_k(y) - u_k(z))(u_k(y)+u_k(z)) \\
   & = 2(u_k(y) - u_k(z))u_k(y)-(u_k(y) - u_k(z))^2 
  \end{split}
\end{equation*}
we conclude the bound
\begin{equation*}
 |u_k^2(y) - u_k^2(z)|\le 2L_k+(u_k(y) - u_k(z))^2
\end{equation*}
for all $y \in \Omega$, $z\in B_{R_k(y)/2}(y)$, and we are done 
unless for some such points $y$ and $z$ there holds
$(u_k(y) - u_k(z))^2 \ge 4L_k$.
Suppose we are in this case. From \eqref{5.41} we then obtain the estimate 
$u_k(y) \le \sqrt{L_k}/2$ and hence $u_k(z) \ge 2\sqrt{L_k}$.
Letting $\hat{z} = (y + z)/2$, as in the proof of Lemma \ref{lemma5.7}
above we observe that $R_k(z), R_k(\hat{z}) \ge R_k(y)/2 \ge |y-z|$ and
\begin{equation*}
   y \in B_{R_k(\hat{z})/2}(\hat{z})\; ,
   \hat{z} \in B_{R_k(z)/2}(z).
\end{equation*}
Since $u_k(z) \ge 2\sqrt{L_k}$, the bound \eqref{5.41} implies 
that $u_k(\hat{z}) \ge 3\sqrt{L_k}/2$. But then, upon estimating
\begin{equation*}
  \begin{split}
 2 L_k 
 & \ge |u_k(y) - u_k(\hat{z})|u_k(\hat{z})+|u_k(\hat{z}) - u_k(z)|u_k(z)\\ 
 & \ge 3 \sqrt{L_k} |u_k(y) - u_k(z)|/2 \ge 3 L_k
  \end{split}
\end{equation*}
we arrive at the desired contradiction.
\end{proof}

From Theorem \ref{thm4.1} and Lemma \ref{lemma5.7} it follows that 
$s_k:=R_k(y_k)\rightarrow 0$ as $k \rightarrow \infty$. Set 
\begin{equation*}
   \Omega_k = \{y; y_k + s_k y \in \Omega\}
\end{equation*}
and scale 
\begin{equation*}
   v_k(y) = u_k(y_k + s_k y),\; \dot{v}_k(y) = \dot{u}_k(y_k + s_k y),\; y \in \Omega_k.
\end{equation*}
Letting $x_k^{(i)}$ be as in the statement of Theorem \ref{thm4.1}, we set
\begin{equation*}
   y_k^{(i)} = \frac{x_k^{(i)} -y_k}{s_k}, 1 \le i \le i_*,
\end{equation*}
and let 
\begin{equation*}
   S_k = \{y_k^{(i)}; 1 \le i \le i_*\}. 
\end{equation*}
Note that in the scaled coordinates we have 
\begin{equation*}
   dist(0, S_k) = \inf\{|y_k^{(i)}|;1 \le i \le i_*\}= 1.
\end{equation*}
Also let 
\begin{equation*}
   p_k = \frac{z_k -y_k}{s_k} \in B_{1/2}(0).
\end{equation*}
Then there holds
\begin{equation}\label{5.43}
  \begin{split}
   L_k/2 & \le |v_k(p_k) - v_k(0)|v_k(0) \\
   & \le \sup_{y\in \Omega_k} 
   \bigg(\sup_{z \in B_{dist(y,S_k)/2}(y)}|v_k(y)-v_k(z)|v_k(y)\bigg)
   = L_k\; ;
  \end{split}
\end{equation}
moreover, from Lemma \ref{lemma5.8} we have
\begin{equation}\label{5.44}
 \sup_{y\in \Omega_k} 
 \bigg(\sup_{z \in B_{dist(y,S_k)/2}(y)}|v_k^2(y) - v_k^2(z)|\bigg) \le CL_k.
\end{equation}

Since $s_k=R_k(y_k)\rightarrow 0$ we may assume that as $k \rightarrow \infty$
the domains $\Omega_k$ exhaust the domain
\begin{equation*}
       \Omega_0 = {\mathbb R} \times ]-\infty,R_0[,
\end{equation*}
where $0 < R_0 \le \infty$. We also may assume that as $k \rightarrow \infty$
either $|y_k^{(i)}|\rightarrow \infty$ or $y_k^{(i)} \rightarrow y^{(i)}$, 
$1 \le i \le i_*$, and we let $S_0$ be the set of accumulation points of $S_k$,
satisfying $dist(0, S_0) = 1$. 
For $R > 0$ denote as 
\begin{equation*}
   K_R = K_{k,R} 
       =\Omega_k \cap B_R(0) \setminus \bigcup_{y \in S_k} B_{1/R}(y).
\end{equation*}
 
Note that we have
\begin{equation*}
  R_k(y_k+s_ky) = s_k\; dist(y,S_k) \ge s_k/R \ \hbox{ for all } y \in K_R.
\end{equation*}
Thus \eqref{4.6} in Theorem \ref{thm4.1} implies the bound
\begin{equation}\label{5.45}
   \lambda_k s_k^2 v_k^2(y)e^{v_k^2(y)} \le C = C(R) \ \hbox{ for all } y \in K_R.
\end{equation}
Finally, letting 
\begin{equation}\label{5.46}
   - v_k\Delta v_k 
   = \lambda_k s_k^2 v_k^2 e^{v_k^2} - s_k^2 \dot{v}_k v_k e^{v_k^2} 
   =: I_k + II_k,
\end{equation}
by \eqref{5.1} we can estimate 
\begin{equation}\label{5.46a}
   ||I_k||_{L^1(\Omega_k)} 
    = \lambda_k\int_{\Omega_k}s_k^2 v_k^2 e^{v_k^2}dy
    = \lambda_k\int_{\Omega}u_k^2 e^{u_k^2}dx \le C;
\end{equation}
moreover, by H\"older's inequality and Lemma \ref{lemma4.1} we have
\begin{equation}\label{5.46b}
   ||II_k||^2_{L^1(\Omega_k)} 
   \le \bigg(\lambda_k\int_{\Omega}u_k^2 e^{u_k^2}dx \bigg)\cdot
   \bigg(\lambda_k^{-1}\int_{\Omega}u_t^2 e^{u_k^2}dx\bigg)
   \rightarrow 0
\end{equation}
as $k \rightarrow \infty$. 
In view of \eqref{5.45} we also have the local $L^2$-bounds 
\begin{equation}\label{5.46c}
  \begin{split}
   ||I_k||^2_{L^2(K_R)} 
   & \le C \sup_{K_R}\big(\lambda_k s_k^2 v_k^2 e^{v_k^2}\big)
   \cdot \bigg(\lambda_k\int_{\Omega_k}s_k^2 v_k^2 e^{v_k^2}dy\bigg)\\
   & \le C(R)\lambda_k\int_{\Omega}u_k^2 e^{u_k^2}dx \le C(R),
  \end{split}
\end{equation}
while Lemma \ref{lemma4.1} implies 
\begin{equation}\label{5.46d}
   ||II_k||^2_{L^2(K_R)} 
   \le C \sup_{K_R}\big(\lambda_k s_k^2 v_k^2 e^{v_k^2}\big)\cdot
   \bigg(\lambda_k^{-1}\int_{\Omega}\dot{u}_k^2 e^{u_k^2}dx\bigg)
   \rightarrow 0 
\end{equation}
as $k \rightarrow \infty$, for any $R > 0$.
Similarly, for any $R > 0$ we find  
\begin{equation}\label{5.47}
    ||\Delta v_k||_{L^2(K_R)} \rightarrow 0 \ (k \rightarrow \infty).
\end{equation}
Also observe that \eqref{5.1} yields the uniform bound
\begin{equation}\label{5.48}
    ||\nabla v_k||_{L^2(\Omega_k)} \le C.
\end{equation}
 
\begin{lemma}\label{lemma5.9}
We have $R_0 = \infty$; that is, $\Omega_0 = {\mathbb R}^2$.
\end{lemma}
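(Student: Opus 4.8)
The plan is to argue by contradiction: suppose $R_0<\infty$, so that $\partial\Omega_0=\mathbb{R}\times\{R_0\}$ is a straight line at finite distance from $0\in\Omega_0$, and the approximating boundaries $\partial\Omega_k$ (dilations of the smooth $\partial\Omega$ by $s_k^{-1}\to\infty$) converge locally smoothly to it. Since $u_k(y_k)\to\infty$ by Lemma \ref{lemma5.7}, we have $v_k(0)\to\infty$. I would propagate this to $v_k\to\infty$ locally uniformly on the set $\mathrm{int}(\Omega_0)\setminus S_0$, while at the same time showing that $v_k$ stays \emph{bounded} near a suitable boundary point of $\partial\Omega_0$; these two facts contradict each other, forcing $R_0=\infty$.

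For the boundary bound—which is the main obstacle—I would first use that $S_0$ is a finite set and $\partial\Omega_0$ a line to pick $\zeta\in\partial\Omega_0$ and $\rho>0$ with $\overline{B_{2\rho}(\zeta)}\cap S_0=\emptyset$. For large $k$ the ball $B_{2\rho}(\zeta)$ then avoids $S_k$ and is contained in some $K_R$, so \eqref{5.47} gives $||\Delta v_k||_{L^2(B_{2\rho}(\zeta)\cap\Omega_k)}\to 0$. The delicate point is to supply a uniform $L^2$-bound for $v_k$ on this set: it does \emph{not} follow from the merely $L^2$-bounded Dirichlet energy \eqref{5.48} alone, but it does follow from the Poincaré inequality on the nearly half-disc $B_{2\rho}(\zeta)\cap\Omega_k$, because $v_k$ vanishes on the portion $\partial\Omega_k\cap B_{2\rho}(\zeta)$ of its boundary, a nondegenerate piece of $\partial(B_{2\rho}(\zeta)\cap\Omega_k)$ whose length is bounded below uniformly in $k$. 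With $||v_k||_{L^2}+||\Delta v_k||_{L^2}$ thus controlled and zero Dirichlet data on the uniformly smooth, nearly flat curved part of the boundary, interior and boundary $W^{2,2}$ elliptic estimates together with the embedding $W^{2,2}\hookrightarrow C^0$ in the plane yield $||v_k||_{L^\infty(B_\rho(\zeta)\cap\Omega_k)}\le C$ uniformly in $k$. In particular $v_k(\zeta')\le C$ for every fixed interior point $\zeta'$ with $|\zeta'-\zeta|<\rho$.

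Finally, for the propagation I would use that on any compact subset $K$ of $\mathrm{int}(\Omega_0)$ with $K\cap S_0=\emptyset$ the nonnegative $v_k$ are almost harmonic, $\Delta v_k\to 0$ in $L^2(K)$ by \eqref{5.47}. Writing $v_k=p_k+q_k$ on a slightly larger such set with $\Delta q_k=\Delta v_k$ and $q_k=0$ on the boundary, elliptic estimates give $q_k\to 0$ in $W^{2,2}\hookrightarrow C^0$, so $p_k=v_k-q_k$ is harmonic and bounded below by $-o(1)$; Harnack's inequality applied to $p_k+o(1)$, chained along a path joining $0$ to $\zeta'$ inside the connected set $\mathrm{int}(\Omega_0)\setminus S_0$, turns $v_k(0)\to\infty$ into $v_k\to\infty$ locally uniformly there, in particular $v_k(\zeta')\to\infty$. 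This contradicts the uniform bound of the previous paragraph, so $R_0=\infty$. I expect the only genuinely delicate step to be the boundary estimate—locating the boundary arc away from $S_0$ where \eqref{5.47} applies and, above all, recovering the $L^2$-bound on $v_k$ there from the Poincaré inequality; the Harnack propagation is routine and parallels the dichotomy already used in the proof of Lemma \ref{lemma5.6}.
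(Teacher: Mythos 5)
Your argument is correct, but it takes a genuinely different route from the paper's. The paper never invokes boundary elliptic estimates or Harnack chains: it stays with the normalization by $L_k$, combines the Dirichlet condition \eqref{1.2} with the oscillation bound \eqref{5.44} of Lemma \ref{lemma5.8} — chaining from $\partial\Omega_k$, which for $R_0<\infty$ is at bounded distance, in a bounded number of admissible steps — to get $\sup_{K_{2R_0}}v_k^2\le CL_k$, and then sets $w_k=v_k/\sqrt{L_k}$, so that $0\le w_k\le C$, $\|\nabla w_k\|_{L^2(\Omega_k)}+\|\Delta w_k\|_{L^2(K_R)}\to 0$ by \eqref{5.47}, \eqref{5.48} and $L_k\to\infty$, and $w_k=0$ on $\partial\Omega_k\cap K_R$; hence $w_k\to 0$ locally uniformly, contradicting the defining inequality \eqref{5.43}, $|w_k(p_k)-w_k(0)|w_k(0)\ge 1/2$. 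You never rescale by $L_k$ and use neither \eqref{5.43} nor \eqref{5.44}: your contradiction is between Lemma \ref{lemma5.7} ($v_k(0)\to\infty$, propagated through $\mathrm{int}(\Omega_0)\setminus S_0$ by splitting $v_k$ into a harmonic part plus an $H^2$-small correction and chaining Harnack) and a uniform bound near a boundary point $\zeta\in\partial\Omega_0$ away from $S_0$, obtained from the Dirichlet condition via a uniform Poincar\'e inequality, local boundary $W^{2,2}$ estimates and $W^{2,2}\hookrightarrow C^0$. Both exploit $R_0<\infty$ legitimately, and your flagged ``delicate'' steps are indeed the only ones needing care: the uniform Poincar\'e and boundary constants rest on the locally smooth convergence $\partial\Omega_k\to\partial\Omega_0$, which holds since $\partial\Omega$ is smooth and $s_k\to 0$ and is the same structural fact behind the paper's assertion that the $\Omega_k$ exhaust the half-plane; also note that $B_{2\rho}(\zeta)\cap\Omega_k\subset K_R$ for large $k$, so \eqref{5.47} applies there as you need. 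The trade-off: the paper's proof is shorter because \eqref{5.43}--\eqref{5.44} are already in hand and uniform boundedness of $w_k$ makes the limit passage elementary, while your proof is heavier on standard elliptic machinery but independent of Lemma \ref{lemma5.8}, and it yields en route the stronger statement that $v_k\to\infty$ locally uniformly away from $S_0$, essentially anticipating Lemma \ref{lemma5.10}.
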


\begin{proof}
Suppose by contradiction that $R_0 < \infty$. Choosing $R = 2 R_0$,
from \eqref{1.2} and \eqref{5.44} we conclude the uniform bound 
\begin{equation*}
   \sup_{y \in K_R}v_k^2(y) \le C L_k
\end{equation*}
with $C = C(R)$. Letting $w_k = \frac{v_k}{\sqrt{L_k}}$, 
we then have $0 \le w_k \le C$,
while \eqref{5.47} and \eqref{5.48} give
\begin{equation*}
   ||\nabla w_k||_{L^2(\Omega_k)} + ||\Delta w_k||_{L^2(K_R)} 
   \rightarrow 0 \hbox{ as } k \rightarrow \infty.
\end{equation*}
Since $w_k = 0$ on $\partial \Omega_k \cap K_R$, it follows that 
$w_k \rightarrow 0$ locally uniformly, contradicting the fact that 
$|w_k(p_k) - w_k(0)|w_k(0) \ge 1/2$.
\end{proof}

\begin{lemma}\label{lemma5.10}
As $k \rightarrow \infty$ we have
\begin{equation*}
   \frac{v_k}{v_k(0)} \rightarrow 1 \hbox{ locally uniformly in } 
   {\mathbb R}^2 \setminus S_0.
\end{equation*}
\end{lemma}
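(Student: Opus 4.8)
The plan is to prove that $\phi_k := v_k/v_k(0)$ tends to the constant $1$, uniformly on compact subsets of $\mathbb{R}^2 \setminus S_0$, by splitting $\phi_k$ on the large sets $K_R$ into a harmonic part whose gradient tends to $0$ and a remainder that is small in $W^{2,2}$.

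First I would collect the estimates that do the work. By Lemma~\ref{lemma5.7} we have $v_k(0) = u_k(y_k) \to \infty$, so the uniform bound \eqref{5.48} gives $\|\nabla \phi_k\|_{L^2(\Omega_k)} = \|\nabla v_k\|_{L^2(\Omega_k)}/v_k(0) \to 0$, while \eqref{5.47} gives $\|\Delta \phi_k\|_{L^2(K_R)} = \|\Delta v_k\|_{L^2(K_R)}/v_k(0) \to 0$ for every $R > 0$. Moreover $\phi_k(0) = 1$ for all $k$; by Lemma~\ref{lemma5.9} together with $dist(0,S_k) = 1$ the origin lies in $K_R$ for every $R > 1$ and all large $k$; and for $R$ large $K_R$ is connected, while the $K_R$ exhaust $\mathbb{R}^2 \setminus S_0$ as $R, k \to \infty$.

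Fixing $R > 1$, I would then decompose $\phi_k = h_k + g_k$ on $K_R$, where $g_k \in H^1_0(K_R)$ solves $\Delta g_k = \Delta \phi_k$ in $K_R$ and $h_k = \phi_k - g_k$ is (weakly, hence smoothly) harmonic in $K_R$. Testing the equation for $g_k$ with $g_k$ and using Poincar\'e's inequality twice yields $\|\nabla g_k\|_{L^2(K_R)} \le C(R)\|\Delta\phi_k\|_{L^2(K_R)} \to 0$ and hence $\|g_k\|_{L^2(K_R)} \to 0$; interior $L^2$-regularity together with the two-dimensional Sobolev embedding $W^{2,2} \hookrightarrow C^0$ then gives $g_k \to 0$ locally uniformly in $K_R$. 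Consequently $\|\nabla h_k\|_{L^2(K_R)} \le \|\nabla \phi_k\|_{L^2(K_R)} + \|\nabla g_k\|_{L^2(K_R)} \to 0$, and since $\nabla h_k$ is itself harmonic, the mean value property yields $\nabla h_k \to 0$ locally uniformly in $K_R$. As $K_R$ is connected, integrating $\nabla h_k$ along paths shows that $h_k - h_k(0) \to 0$ locally uniformly, while $h_k(0) = \phi_k(0) - g_k(0) = 1 - g_k(0) \to 1$. Hence $h_k \to 1$, and therefore $\phi_k = h_k + g_k \to 1$, locally uniformly in $K_R$; letting $R \to \infty$ completes the proof.

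I do not expect an essential obstacle, since the hard analytic input has already been isolated in \eqref{5.47} and \eqref{5.48}; the only point deserving some care is the topology and bookkeeping behind the claim that ``$K_R$ is connected and exhausts $\mathbb{R}^2\setminus S_0$'', which rests on the fact that $\Omega_k$ exhausts the whole plane (Lemma~\ref{lemma5.9}) and that deleting the finitely many points of $S_0$ does not disconnect $\mathbb{R}^2$, so that the step ``a harmonic function with small gradient is nearly constant'' may legitimately be applied over all of $\mathbb{R}^2\setminus S_0$.
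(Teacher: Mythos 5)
Your argument is correct and follows essentially the same route as the paper: the paper normalizes $w_k=v_k/v_k(0)$, invokes Lemma \ref{lemma5.7} together with \eqref{5.47}, \eqref{5.48} to get $\|\nabla w_k\|_{L^2(\Omega_k)}+\|\Delta w_k\|_{L^2(K_R)}\rightarrow 0$, and concludes locally uniform convergence to a constant, fixed to be $1$ via $w_k(0)=1$ and $dist(0,S_0)=1$. Your harmonic-plus-$H^1_0$ decomposition on $K_R$ merely makes explicit the standard elliptic step the paper leaves implicit (and is the same device used in Lemma \ref{lemma5.6}), so there is no substantive difference.
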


\begin{proof}
Recall from Lemma \ref{lemma5.7} that
\begin{equation*}
   c_k := u_k(y_k) = v_k(0) \rightarrow \infty \hbox{ as }
   k \rightarrow \infty.
\end{equation*}
Letting $w_k = c_k^{-1}v_k$, from \eqref{5.47} and \eqref{5.48} 
for any $R > 0$ then we have
\begin{equation*}
   ||\nabla w_k||_{L^2(\Omega_k)} + ||\Delta w_k||_{L^2(K_R)} 
   \rightarrow 0 \hbox{ as } k \rightarrow \infty,
\end{equation*}
and we conclude that $w_k$ converges locally uniformly on 
${\mathbb R}^2 \setminus S_0$ to a constant limit function $w$. 
Recalling that $dist(0, S_0) = 1$, we obtain that $w \equiv w(0) = 1$,
as claimed. 
\end{proof}

Define
\begin{equation*}
   \tilde{v}_k(y) = \frac{1}{L_k}(v_k(y) - v_k(0)) v_k(0).
\end{equation*}
We claim that $\tilde{v}_k$ grows at most logarithmically. To see this, 
let $s_0 \ge 2\sup_i |y^{(i)}|$ and fix $q = 3/2$. For any fixed $R > 0$, any
$y \in K_R$ with $|y| \ge q^{L} s_0$ let $y_l = q^{l-L}y$, $0 \le l \le L$,
so that $y_{l-1} \in B_{dist(y_l,S_k)/2}(y_l)$ for all $l \ge 1$ 
and sufficiently large $k$. Note that we have 
$|v_k(y_0) - v_k(0)| v_k(0) \le CL_k$.
By Lemma \ref{lemma5.10} with error $o(1) \rightarrow 0$ as 
$k \rightarrow \infty$ then we can estimate 
\begin{equation}\label{5.49}
  \begin{split}
   |\tilde{v}_k(y)| & \le \frac{1}{L_k} 
   \sum_{l=1}^L|v_k(y_l) - v_k(y_{l-1})| v_k(0) + C\\
   & \le \frac{1 + o(1)}{L_k} \sum_{l=1}^L|v_k(y_l) - v_k(y_{l-1})| v_k(y_l)
   + C\\
   & \le C + (1 + o(1))L  \le C + (C + o(1))\log |y|.
  \end{split}
\end{equation}
Moreover, from \eqref{5.46c}, \eqref{5.46d} and Lemma \ref{lemma5.10} 
for any $R > 0$ with a constant $C = C(R)$ we obtain 
\begin{equation}\label{5.50}
   ||\Delta \tilde{v}_k||_{L^2(K_R)}  \leq
   \sup_{K_R}\bigg(\frac{v_k(0)}{L_k v_k}\bigg)||v_k \Delta v_k||_{L^2(K_R)}
   \le C \sup_{K_R}\bigg(\frac{v_k(0)}{L_k v_k}\bigg) \rightarrow 0
\end{equation}
as $k \rightarrow \infty$. Thus we may assume that 
$\tilde{v}_k \rightarrow \tilde{v}$ locally uniformly away from $S_0$,
where $\tilde{v}$ satisfies
\begin{equation}\label{5.51}
   \Delta \tilde{v} = 0,\; \tilde{v}(0) = 0,\; 
   \sup_{B_{1/2}(0)} \tilde{v} \ge 1/2,\;
   |\tilde{v}(y)| \le C + C\log(1+|y|).
\end{equation}

Fix any point $x_0 \in S_0$. For any $r > 0$ upon estimating 
$v_k(0)v_k e^{v_k^2} \le \max\{v_k^2(0) e^{v_k^2(0)},v_k^2 e^{v_k^2}\}$
we have 
\begin{equation*}
    L_k\int_{B_r(x_0)} |\Delta \tilde{v}_k|\;dx 
    = \int_{B_r(x_0)} v_k(0)|\Delta v_k|\;dx = I_k + II_k,
\end{equation*}
where
\begin{equation*}
  \begin{split}
    I_k = \int_{B_r(x_0)}\lambda_k s_k^2 v_k(0)v_k e^{v_k^2}dx
      & \le C \lambda_k s_k^2 v_k^2(0) e^{v_k^2(0)} 
        + \lambda_k \int_{B_r(x_0)} s_k^2 v_k^2 e^{v_k^2}\;dx\\
      & \le C \lambda_k R_k^2(y_k) u_k^2(y_k)e^{u_k^2(y_k)}
        + \lambda_k \int_{\Omega} u_k^2 e^{u_k^2}\;dx \le C
  \end{split}
\end{equation*}
by Theorem \ref{thm4.1} and \eqref{5.40}. Similarly, by H\"older's inequality  
\begin{equation*}
    |II_k|^2 = \big|\int_{B_r(x_0)} s_k^2  v_k(0) |\dot{v}_k| e^{v_k^2}dx\big|^2
    \le C \lambda_k^{-1}\int_{B_r(x_0)}s_k^2 \dot{v}_k^2 e^{v_k^2}dx \rightarrow 0.
\end{equation*}
as $k \rightarrow \infty$. 
It follows that
$\Delta \tilde{v}_k \rightarrow 0$ in $L^1_{loc}({\mathbb R}^2)$ 
as $k \rightarrow \infty$.
The sequence $(\tilde{v}_k)$ therefore is uniformly locally bounded in 
$W^{1,q}$ for any $q < 2$ and the limit 
$\tilde{v}\in W^{1,q}_{loc}({\mathbb R}^2)$ extends as a weakly 
harmonic function to all of ${\mathbb R}^2$. 
The mean value property together with the logarithmic growth 
condition \eqref{5.51} then implies that $\tilde{v}$ is a constant; 
see for instance \cite{Adi-Robert-Struwe}, Theorem 2.4.
That is, $\tilde{v} \equiv \tilde{v}(0) = 0$. But by \eqref{5.51} we have
$\sup_{B_{1/2}(0)} |\tilde{v}| \ge 1/2$, which is the desired contradiction
and completes the proof of Proposition \ref{prop5.2}.

\subsection{Proof of Proposition \ref{blow}}\label{proofblow}
We follow closely the proof of Proposition $4.7$ in \cite{Struwe07}. 
Fix an index $i\in \{1,\ldots,i_\star\}$ and write $r_k=\rho_k$. Define
\begin{equation*}
   v_k(y)=u_k(x_k^{(i)}+r_ky)
\end{equation*}
where $y\in \Omega_k=\Omega_k^{(i)}=\{y; x_k^{(i)}+r_ky \in \Omega\}$. Also let
\begin{equation*}
   y_k^{(j)}=\frac{x_k^{(j)}-x_k^{(i)}}{r_k}
\end{equation*}
and
\begin{equation*}
   S_k=S_k^{(i)}=\{ y_k^{(j)};1\le j\le i_\star\}.
\end{equation*}
By choosing a subsequence we may assume that as $k\rightarrow \infty$ either 
$|y_k^{(j)}|\rightarrow \infty$ or $y_k^{(j)}\rightarrow y^{(j)}$, $1\le j\le i_\star$, 
and we let $S_0=S_0^{(i)}$ be the set of accumulation points of $S_k$. 
Note that $0\in S_0$. Finally we let 
\begin{equation*}
y_k^{(0)}=\frac{x_k-x_k^{(i)}}{r_k}
\end{equation*}
be the scaled points $x_k$ for which \eqref{sub} holds and which satisfy $|y_k^{(0)}|=1$. 
Choosing another subsequence we may assume that $y_k^{(0)}\rightarrow y^{(0)}$ 
as $k\rightarrow \infty$. 

Recalling that $v_k(y_k^{(0)})\rightarrow \infty$ by \eqref{sub} and observing that 
$\mathbb{R}^2\backslash S_0$ is connected, from Proposition \ref{prop5.2} and a 
standard covering argument we obtain that 
\begin{equation}\label{conv}
    v_k-v_k(y_k^{(0)})\rightarrow 0\ \ \text{as}\ \ k\rightarrow \infty
\end{equation}
locally uniformly on $\mathbb{R}^2\backslash S_0$. Moreover, as $k\rightarrow \infty$, the 
sets $\Omega_k$ exhaust all of $\mathbb{R}^2$.
  
Next we note that $\eta_k$ satisfies the equation
\begin{equation}\label{eq}
   -\Delta \eta_k=\lambda_k r_k^2 v_k(y_k^{(0)})v_ke^{v_k^2}
   -r_k^2 v_k(y_k^{(0)})\dot{v}_ke^{v_k^2}=I_k+II_k
\end{equation}
on $\Omega_k$. For $L > 1$ set $K_L=B_L(0)\setminus (\cup_{y_0\in S_0}B_{1/L}(y_0))$.
Another covering argument together with \eqref{5.40} allows to bound 
$e^{v_k^2}\le Ce^{v_k^2(y_k^{(0)})}=Ce^{u_k^2(x_k)}$ on $K_L$, where $C=C(L)$.
By \eqref{4.6} and Lemma \ref{lemma4.1} for any $L>0$ we then obtain
\begin{equation*}
\begin{split}
 \int_{K_L}|II_k|^2 dx 
 & \le C \lambda_kr_k^2v_k^2(y_k^{(0)}) e^{v_k^2(y_k^{(0)})}
   \cdot \big(\lambda_k^{-1}\int_{B_L(0)}r_k^2\dot{v}_k^2e^{v_k^2}dx\big)\\
 & = C \lambda_k r_k^2u_k^2(x_k) e^{u_k^2(x_k)}
 \cdot \big(\lambda_k^{-1}\int_{B_{Lr_k}(x_k^{(i)})}\dot{u}_k^2e^{u_k^2}dx\big) 
 \rightarrow 0
\end{split}
\end{equation*}
as $k \rightarrow \infty$. Next rewrite $I_k$ as 
\begin{equation*}
   I_k=\lambda_k r_k^2 v^2_k(y_k^{(0)})e^{v^2_k(y_k^{(0)})}\hat{v}_ke^{\eta_k(\hat{v}_k+1)},
\end{equation*}
where $\hat{v}_k=\frac{v_k}{v_k(y_k^{(0)})}$. From \eqref{conv} we get that 
$\hat{v}_k \rightarrow 1$ locally uniformly on $\mathbb{R}^2\backslash S_0$ while 
from \eqref{sub} we conclude that 
\begin{equation*}
   \lambda_k r_k^2 v^2_k(y_k^{(0)})e^{v^2_k(y_k^{(0)})}
   = \lambda_k r_k^2u_k^2(x_k) e^{u_k^2(x_k)}\rightarrow \mu_0
\end{equation*}
for some $\mu_0>0$ as $k\rightarrow \infty$. Since by Proposition \ref{prop5.2} 
$\eta_k$ is locally uniformly bounded, from \eqref{eq} and the above considerations 
via standard $L^2$-theory we obtain that $\eta_k$ is uniformly locally bounded in $H^2$ 
away from $S_0$. Hence we conclude that $\eta_k$ converges locally uniformly away 
from $S_0$ and weakly locally in $H^2$ to some limit 
$\eta_0\in H^2_{loc}(\mathbb{R}^2\setminus S_0)$ which is smooth away from $S_0$ 
and which satisfies the equation 
\begin{equation}\label{etamu}
  -\Delta \eta_0=\mu_0 e^{2\eta_0}
\end{equation} 
on $\mathbb{R}^2\setminus S_0$. Recalling that 
$\hat{v}_k I_k = \lambda_k r_k^2 v^2_k e^{v^2_k}$, from \eqref{5.1} we can estimate
\begin{equation*}
  \begin{split}
   \int_{\mathbb{R}^2}e^{2\eta_0}dx
   & \le \lim_{L\rightarrow \infty} \liminf_{k\rightarrow \infty} 
   \int_{K_L}\hat{v}_k^2e^{\eta_k(\hat{v}_k+1)}dx 
   = \lim_{L\rightarrow \infty} \liminf_{k\rightarrow \infty} 
   \int_{K_L}\mu_0^{-1}\hat{v}_k I_k\; dx\\
   & \le \mu_0^{-1} \limsup_{k\rightarrow \infty} 
   \int_{\Omega} \lambda_k u_k^2 e^{u^2_k}dx \le C\Lambda
  \end{split}
\end{equation*}
 as before, and $e^{2\eta_0}\in L^1(\mathbb{R}^2)$. 

Similar to \eqref{5.26} we can moreover estimate for every $L\ge 1$
\begin{equation*}
   \int_{B_L(y_0)} |II_k| dx \rightarrow 0\ \ \text{as}\ \ k\rightarrow \infty,
\end{equation*}
and analogous to \eqref{5.31} we have
\begin{equation*}
   \int_{B_{1/L}(y_0)} I_k \; dx \rightarrow 0
\end{equation*}
for any $y_0 \in S_0$ if we let first $k\rightarrow \infty$ and then $L\rightarrow \infty$. 
Hence for such $y_0$ we conclude that
\begin{align*}
   \limsup_{k\rightarrow \infty} \int_{B_{1/L}(0)} |\Delta \eta_k| dx \rightarrow 0 \
   \text{ as } \ L\rightarrow \infty.
\end{align*}
This shows that $\eta_0$ extends as a distribution solution of \eqref{etamu} on all of $\mathbb{R}^2$. 
The claim then follows from the classification result of Chen-Li \cite{Chen-Li}.

In the case of Proposition \ref{prop5.1g} we argue similarly by scaling with $r_k=r_k^{(l+1)}$. 
Note that in this case $S_0=\{0\}$.

\section{Applications}
In this final section we will use Theorem \ref{thm1.1} to obtain solutions to
\eqref{1.01} in the supercritical high energy regime.

Let $\Omega$ be a bounded domain in ${\mathbb R}^2$. 
Recall the Moser-Trudinger inequality
\begin{equation} \label{6.1}
  \sup_{u \in H^1_0 (\Omega); ||\nabla u||^2_{L^2(\Omega)} \le 1}
  \int_{\Omega} e^{4 \pi u^2} \, dx < \infty;
\end{equation}
see \cite{Moser-1971}, \cite{Trudinger-1967}. The exponent $\alpha = 4 \pi$
is critical for this Orlicz space embedding in the sense that for 
any $\alpha > 4\pi$ there holds
\begin{equation} \label{6.2}
  \sup_{u \in H^1_0 (\Omega); ||\nabla u||^2_{L^2(\Omega)} \le 1}
  \int_{\Omega} e^{\alpha u^2} \, dx = \infty.
\end{equation}
Indeed, suppose that $B_R(0)\subset\Omega$. Following Moser \cite{Moser-1971},
for $0 < \rho < R$ consider the functions 
\begin{equation*}
  m_{\rho, R} (x) = \frac{1}{\sqrt{2 \pi}}
    \begin{cases}
      \sqrt{\log \left(\frac{R}{\rho} \right)}, &0 \le |x| \le \rho,\\
      \log \left(\frac{R}{r} \right) \bigg/ \sqrt{\log \left(\frac{R}{\rho}
      \right)}, &\rho \le |x| = r < R,\\
      0, &R \le |x|.
    \end{cases}
\end{equation*}
Note that $||\nabla m_{\rho, R}||^2_{L^2(\Omega)} = 1$,
and for any $\alpha > 4\pi$ we have
\begin{equation} \label{6.5}
  \int_{\Omega} e^{\alpha m_{\rho, R}^2} \, dx \rightarrow \infty \hbox{ as } \rho 
  \rightarrow 0.
\end{equation}

After scaling, \eqref{6.1} gives 
\begin{equation} \label{6.3}
  c_{\alpha} = c_{\alpha}(\Omega) :=
  \sup_{u \in H^1_0 (\Omega); ||\nabla u||^2_{L^2(\Omega)} \le \alpha} E(u) < \infty
\end{equation}
for any $\alpha \le 4\pi$, while for any $\alpha > 4\pi$ from \eqref{6.2} we have
\begin{equation} \label{6.4}
  \sup_{u \in H^1_0 (\Omega); ||\nabla u||^2_{L^2(\Omega)} \le \alpha }
  E(u) = \infty.
\end{equation}
If we normalize $vol(\Omega)=\pi$, the constant $c_{4\pi}(\Omega)$
is maximal when $\Omega = B_1(0) =:B$, as can be seen by symmetrization. Let 
$c_* = c_{4\pi}(B)$.

\subsection{Solutions with ``large'' Moser-Trudinger energy on non-contractible 
domains}
As stated in the Introduction, we obtain the following result in the spirit of 
Coron \cite{Coron}.

\begin{theorem} \label{thm6.1}
For any $c^* > c_*$ there are numbers $R_1 > R_2 > 0$ with the following
property. Given any domain
$\Omega \subset {\mathbb R}^2$ with $vol(\Omega) = \pi$ containing the annulus 
$B_{R_1} \setminus B_{R_2} (0)$ and such that $0 \notin \overline{\Omega}$,
for any constant $c_0$ with $c_{4\pi}(\Omega)< c_0< c^*$ problem \eqref{1.01} 
admits a positive solution $u$ with $E(u) =c_0$.
\end{theorem}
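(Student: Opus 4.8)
The plan is to realize a Coron-type minimax for the constrained Dirichlet functional $D(u)=\int_\Omega|\nabla u|^2\,dx$ on the configuration space $N_{c_0}=\{u\in H^1_0(\Omega):u\ge 0,\ E(u)=c_0\}$ by means of the flow \eqref{1.1}--\eqref{1.5}, which by \eqref{1.6} is exactly the gradient flow of $D$ on $N_{c_0}$ and whose critical points solve \eqref{1.01}. Since $c_0>c_{4\pi}(\Omega)$, by \eqref{6.3} every $u\in N_{c_0}$ has $D(u)>4\pi$, while by \eqref{6.4} the infimum of $D$ over $N_{c_0}$ equals $4\pi$ and is not attained, so one must look for a critical level strictly above $4\pi$. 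First I would build, from truncated Moser functions $m_{\rho,R}$ concentrated at points $\zeta(\sigma)$ running once around a circle of radius $r_0\in(R_2,R_1)$ inside $\Omega$ and renormalized so that $E=c_0$, a continuous loop $\gamma\colon S^1\to N_{c_0}$ with $\sup_\sigma D(\gamma(\sigma))$ arbitrarily close to $4\pi$ (possible since $c_\alpha(\Omega)=\infty$ for every $\alpha>4\pi$). Letting $\mathcal D$ be the class of continuous extensions $\bar\gamma\colon\bar B^2\to N_{c_0}$ of $\gamma$, I set $\beta_*=\inf_{\bar\gamma\in\mathcal D}\sup_{\bar B^2}D(\bar\gamma)$.

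The heart of the matter is the two-sided estimate $4\pi<\beta_*<8\pi$. For the lower bound one shows that, for small $\varepsilon>0$, every $u\in N_{c_0}$ with $D(u)<4\pi+\varepsilon$ must concentrate near a single Moser bubble — because $E(u)>c_{4\pi}(\Omega)$ forces $\int_\Omega u^2e^{u^2}\,dx\to\infty$ as $\varepsilon\to0$, so that a Carleson--Chang/Struwe blow-up analysis (compare \cite{Struwe2000}, \cite{Adimurthi-Struwe}) applies — and hence carries a well-defined concentration point in $\overline\Omega\subset{\mathbb R}^2\setminus\{0\}$; since $\gamma$ has winding number $1$ about the origin under this point map, it cannot be contracted inside $\{u\in N_{c_0}:D(u)<4\pi+\varepsilon\}$, which via the deformation property of the gradient flow gives $\beta_*\ge 4\pi+\varepsilon$. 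For the upper bound I would exhibit one extension $\bar\gamma\in\mathcal D$ with $\sup_{\bar B^2}D(\bar\gamma)<8\pi$, following Coron \cite{Coron}: as the radial parameter decreases, the bubble at $\zeta(\sigma)$ is gradually traded, through configurations of two bumps of energies summing to $c_0$ and located in the channel $B_{R_1}\setminus B_{R_2}$, for a fixed $\sigma$-independent function; since a bump of energy at most $c_0<c^*$ can be supported in a suitably small ball of the channel with Dirichlet energy strictly below $4\pi$ once $R_1,R_2$ are small enough relative to $c^*$, the total Dirichlet energy along this family stays below $8\pi$. This is the only place where the annular geometry and the constant $c^*$ enter, and choosing $R_1,R_2$ depending only on $c^*$ makes the bound uniform over all admissible $\Omega$ and all $c_0<c^*$.

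Granting these bounds, I would run the flow \eqref{1.1}--\eqref{1.5} from the members of a near-optimal extension; using uniqueness and continuous dependence on the data from Theorem \ref{thm1.1} and the monotonicity of $D$ along \eqref{1.6}, one checks that the flow maps $\mathcal D$ into itself without increasing $\sup D$, so that a standard minimax/deformation argument — carried out, as in \cite{Struwe2000}, by the flow itself — produces $\sigma_k\in\bar B^2$ and $t_k\to\infty$ for which $u_k:=u^{\sigma_k}(t_k)\in N_{c_0}$, $D(u_k)\to\beta_*$, and $\int_{\{t_k\}\times\Omega}u_t^2e^{u^2}\,dx\to0$; in particular $(u_k)$ is subject to the blow-up analysis of Sections 4--5 (Lemma \ref{lemma4.1}, Theorems \ref{thm4.1}, \ref{thm5.1}). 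If no concentration occurs, $u_k\to u_\infty$ strongly in $H^1_0(\Omega)$, $u_\infty$ solves \eqref{1.01} with $D(u_\infty)=\beta_*$ and $E(u_\infty)=c_0>0$, whence $\lambda_\infty>0$ and $u_\infty>0$ by the maximum principle — the desired solution. If concentration occurs, Theorem \ref{thm1.1} together with Theorem \ref{thm5.1} gives $\beta_*=D(u_\infty)+4\pi\sum_i l_i$ with $\sum_i l_i\ge 1$; if $\lambda_\infty>0$ then no Moser--Trudinger volume concentrates at the blow-up points (the bubbles contribute $O((\lambda_k u_k^2(x_k^{(i)}))^{-1})\to0$ to $E$), so $u_\infty\in N_{c_0}$, $D(u_\infty)>4\pi$ and $\beta_*>8\pi$, whereas if $\lambda_\infty=0$ then $u_\infty\equiv0$ and $\beta_*=4\pi\sum_i l_i$, so $\beta_*>4\pi$ forces $\beta_*\ge8\pi$. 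Either way $\beta_*\ge8\pi$, contradicting the upper bound. Hence the flow cannot concentrate and the theorem follows.

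The main obstacle is the construction of the comparison family delivering the sharp bound $\beta_*<8\pi$ — this is where the annular geometry and the choice of $R_1,R_2$ are decisive — together with the bookkeeping needed to show that the heat flow genuinely implements the minimax, i.e.\ that flowing the sweepout preserves the topological linking exploited in the lower bound (the loop $\gamma$ may be moved by the flow, and one must keep it linked with the concentration-point obstruction). Once these are in place, the global existence and the precise quantization of Theorem \ref{thm1.1} supply everything else.
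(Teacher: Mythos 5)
Your overall strategy (a Coron-type topological obstruction, implemented through the flow \eqref{1.1}--\eqref{1.5}, and then exclusion of blow-up because the quantized Dirichlet levels $4\pi l$ cannot lie in the window between $4\pi$ and $8\pi$) is indeed the philosophy of the paper, and your endgame coincides with the paper's. But the step on which everything hinges --- a comparison family in $N_{c_0}$ with Dirichlet energy \emph{strictly} below $8\pi$ --- is broken as you describe it. Your mechanism, ``a bump of energy at most $c_0<c^*$ can be supported in a suitably small ball of the channel with Dirichlet energy strictly below $4\pi$ once $R_1,R_2$ are small enough,'' is false: by definition of $c_{4\pi}$ and scaling, $c_{4\pi}(B_r)=r^2c_*$, so any function supported in a small ball of the channel whose Moser--Trudinger energy exceeds $r^2c_*$ must have Dirichlet energy \emph{greater} than $4\pi$, and shrinking the ball lowers this threshold, i.e.\ makes matters worse, not better. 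Consequently the two-bump trade cannot keep $\sup D<8\pi$: along any continuous transfer with the two $E$-energies summing to $c_0$ there is a moment where each bump carries $c_0/2$, and for $c_0$ of size comparable to $c^*>c_*$ this exceeds the Moser--Trudinger constant of any region contained in the channel (whose volume is at most $\pi R_1^2<\pi$), so each disjointly supported bump then needs Dirichlet energy above $4\pi$ and the total is at least $8\pi$. The paper's construction is different and avoids this: a \emph{single} cut-off Moser bubble $v_{s,x_0}=m_{s\rho,R,(1-s)x_0}(1-\tau)$ whose center slides from the circle $|x_0|=3R$ into the hole at the origin while its scale fattens, renormalized by $\sqrt{\alpha_{s,x_0}}$ to lie on $E=c_0$; the strict bound $\alpha_{s,x_0}<8\pi$ (hence Dirichlet energy below $8\pi$) comes from choosing $\rho$ and $R_2$, depending only on $c^*$, so that \eqref{6.6} holds, i.e.\ so that at normalization $8\pi$ the Moser--Trudinger energy would already exceed $c^*$. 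This is also where the annulus and the constants $R_1,R_2$ really enter, in the opposite way from what you propose.

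A second, smaller gap is the minimax bookkeeping you yourself flag. The blow-up machinery of Sections 4--5 (Lemma \ref{lemma4.1}, the quantization in Theorem \ref{thm1.1}) is proved for a \emph{single} trajectory of the flow, with good times chosen along that trajectory; your scheme would apply it to a diagonal sequence $u^{\sigma_k}(t_k)$ taken from different trajectories, and in addition requires a deformation argument in which the boundary loop, which is moved by the flow, stays linked with the concentration-point obstruction. Neither point is routine. The paper circumvents both: Lemma \ref{lemma6.1} plus the center-of-mass homotopy of Lemma \ref{lemma6.2} show that not all flow lines of the explicit disc family can relax to Dirichlet energy near $4\pi$, and a semicontinuity argument (estimate \eqref{6.10}) then pins down one fixed initial datum $w_{s_1,x_0}$ whose single flow line keeps its Dirichlet energy in $(\alpha_0,8\pi)$ with $\alpha_0>4\pi$ for all time; only to this one trajectory are Lemma \ref{lemma4.1} and the quantization applied, yielding the contradiction and hence convergence to the desired positive solution with $E=c_0$. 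If you want to salvage your minimax formulation you would need both a corrected sub-$8\pi$ sweepout (essentially the paper's sliding-bubble family) and an extension of the quantization analysis to sequences drawn from a compact family of trajectories; the paper's selection argument is precisely the device that makes this unnecessary.
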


The proof of Theorem \ref{thm6.1} relies on the following observation.

\begin{lemma}\label{lemma6.1}
Let $(u_k)$ be a sequence in $H_0^1(\Omega)$ such that 
\begin{equation*}
  E(u_k) \ge c > c_{4\pi}(\Omega),\
  \int_{\Omega} |\nabla u_k|^2 \, dx \rightarrow 4 \pi \ \hbox{ as } k\rightarrow \infty.
\end{equation*}
Then there exists a point $x_0 \in \overline{\Omega}$ such that 
$|\nabla u_k|^2 \, dx \overset{w^*}{\rightharpoondown} 4\pi \delta_{x_0}$
weakly in the sense of measures as $k\rightarrow \infty$ suitably.
\end{lemma}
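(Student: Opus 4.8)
The plan is to run the concentration--compactness dichotomy for the Moser--Trudinger functional at the critical level $4\pi$. After passing to a subsequence we may assume $u_k \rightharpoondown u_\infty$ weakly in $H^1_0(\Omega)$, $u_k \to u_\infty$ strongly in $L^2(\Omega)$ and pointwise a.e., and $|\nabla u_k|^2\,dx \overset{w^*}{\rightharpoondown} \mu$ weakly in the sense of measures on the compact set $\overline{\Omega}$, where $\mu$ is a nonnegative Radon measure; testing against $\varphi \equiv 1$ gives $\mu(\overline{\Omega}) = 4\pi$, and for nonnegative $\varphi \in C(\overline{\Omega})$ weak lower semicontinuity of $u \mapsto \int_\Omega \varphi|\nabla u|^2\,dx$ gives $\int \varphi\,d\mu \ge \int \varphi |\nabla u_\infty|^2\,dx$, so in particular $\|\nabla u_\infty\|^2_{L^2} \le 4\pi$ and $E(u_\infty) \le c_{4\pi}(\Omega) < c$. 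The first step is to show $u_\infty = 0$. If $\|\nabla u_\infty\|^2_{L^2} = 4\pi$ then $u_k \to u_\infty$ strongly in $H^1_0(\Omega)$, hence $E(u_k) \to E(u_\infty) \le c_{4\pi}(\Omega)$, contradicting $E(u_k) \ge c$; thus $\|\nabla u_\infty\|^2_{L^2} < 4\pi$ strictly. By the improved Moser--Trudinger inequality of Lions for sequences converging weakly in $H^1_0$, strict subcriticality of the weak limit implies $\sup_k \int_\Omega e^{q u_k^2}\,dx < \infty$ for some $q > 4\pi$; together with $u_k \to u_\infty$ a.e.\ this makes $(e^{u_k^2}-1)_k$ uniformly integrable, so Vitali's theorem gives $e^{u_k^2}-1 \to e^{u_\infty^2}-1$ in $L^1(\Omega)$ and again $E(u_k) \to E(u_\infty) \le c_{4\pi}(\Omega)$, a contradiction. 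Hence $u_\infty = 0$ and $\mu$ is a nonnegative measure of total mass $4\pi$.

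Next I would prove by contradiction that $\mu$ is a single point mass. Assume $\mu(\{x\}) < 4\pi$ for every $x \in \overline{\Omega}$. Then for each $x$ there is $\rho_x > 0$ with $\mu(\overline{B_{2\rho_x}(x)}) < 4\pi$, and by compactness finitely many balls $B_{\rho_{x_j}}(x_j)$, $1 \le j \le N$, cover $\overline{\Omega}$. Fixing $j$ and using $\limsup_k \int_{B_{2\rho_{x_j}}(x_j)} |\nabla u_k|^2\,dx \le \mu(\overline{B_{2\rho_{x_j}}(x_j)}) < 4\pi$, for large $k$ we get $\int_{B_{2\rho_{x_j}}(x_j)} |\nabla u_k|^2\,dx \le 4\pi - \delta_j$ with $\delta_j > 0$. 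Choosing $\varphi_j \in C^\infty_c(B_{2\rho_{x_j}}(x_j))$ with $\varphi_j \equiv 1$ on $B_{\rho_{x_j}}(x_j)$ and using $u_k \to 0$ in $L^2(\Omega)$ (which holds because $u_\infty = 0$) to absorb the lower order terms coming from $\nabla(\varphi_j u_k) = \varphi_j \nabla u_k + u_k \nabla \varphi_j$, one obtains $\|\nabla(\varphi_j u_k)\|^2_{L^2} \le 4\pi - \delta_j/2$ for large $k$, so the scaled form of \eqref{6.1} yields $\varepsilon_j > 0$ with $\int_{B_{\rho_{x_j}}(x_j)} e^{(1+\varepsilon_j) u_k^2}\,dx \le C_j$ for large $k$. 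Summing over the cover and setting $\varepsilon_0 = \min_j \varepsilon_j > 0$ shows that $(e^{u_k^2})_k$ is bounded in $L^{1+\varepsilon_0}(\Omega)$; since $u_k \to 0$ a.e., Vitali's theorem gives $e^{u_k^2}-1 \to 0$ in $L^1(\Omega)$, that is, $E(u_k) \to 0$. This contradicts $E(u_k) \ge c > c_{4\pi}(\Omega) \ge E(v) > 0$, the last inequality holding for any nonzero $v \in H^1_0(\Omega)$ with $\|\nabla v\|^2_{L^2} \le 4\pi$. Hence $\mu(\{x_0\}) = 4\pi$ for some $x_0 \in \overline{\Omega}$, and since $\mu(\overline{\Omega}) = 4\pi$ we conclude $\mu = 4\pi\delta_{x_0}$; recalling $u_\infty = 0$ this is exactly the asserted weak-$*$ convergence.

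The main obstacle is the localized Moser--Trudinger estimate in the second step: one must show that if no single point carries the full Dirichlet mass $4\pi$ then, after cutting off near each point, the rescaled Dirichlet energy of $\varphi_j u_k$ stays \emph{strictly} below the critical threshold $4\pi$ uniformly in $k$ — which is precisely where the information $u_\infty = 0$, hence $u_k \to 0$ in $L^2(\Omega)$, is needed to control the cross term $u_k \nabla \varphi_j$ — thereby gaining a fixed supercritical exponent $1+\varepsilon_j$ for $e^{u_k^2}$ on each ball and forcing $E(u_k) \to 0$ in the absence of a concentration point. The other slightly delicate point is quoting the sharp form of Lions' improved Moser--Trudinger inequality in the first step; everything else is routine weak-convergence and Vitali-type bookkeeping.
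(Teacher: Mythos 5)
Your step 2 is correct and, together with the preliminary extraction of a weak-$*$ limit measure $\mu$ of total mass $4\pi$, it gives a more self-contained argument than the paper's: the paper simply negates the conclusion to obtain a uniform bound $\sup_{k,\,x_1}\int_{B_{r_1}(x_1)\cap\Omega}|\nabla u_k|^2dx\le\alpha_1<4\pi$, invokes the reasoning of Lemma 3.3 of \cite{Adimurthi-Struwe} (which does not require the weak limit to vanish) to get $e^{u_k^2}$ bounded in $L^q$ for some $q>1$, and then contradicts \eqref{6.3} via Vitali, since $E(u)=\lim_k E(u_k)\ge c>c_{4\pi}(\Omega)$ while $\|\nabla u\|_{L^2}^2\le 4\pi$ by weak lower semicontinuity. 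Your covering/cutoff argument replaces that citation by an explicit localization, and the contradiction you reach is $E(u_k)\to 0$ versus $E(u_k)\ge c>0$; the cross terms in $\nabla(\varphi_j u_k)$ are indeed absorbed using $u_k\to 0$ in $L^2(\Omega)$ (Rellich), so this part is fine --- but it genuinely needs $u_\infty=0$, which brings us to the problem.

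Your first step, as written, contains a real error. The improved Moser--Trudinger inequality of Lions (\cite{Lions}, Theorem I.6) requires the weak limit of the normalized sequence to be \emph{nonzero}: for $\|\nabla v_k\|_{L^2}\le 1$, $v_k\rightharpoondown v$, one gets $\sup_k\int_\Omega e^{4\pi p v_k^2}dx<\infty$ for $p<(1-\|\nabla v\|_{L^2}^2)^{-1}$, and this exceeds the critical exponent precisely when $v\neq 0$. ``Strict subcriticality of the weak limit'' gives nothing: the functions $\sqrt{4\pi}\,m_{\rho,R}$ with $\rho\to 0$ have weak limit $0$ (certainly subcritical) and $\int_\Omega e^{qu_k^2}dx\to\infty$ for every $q>1$, so the statement you quote is false (also the gain should be some $q>1$, not $q>4\pi$). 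Moreover the logic ``the case $\|\nabla u_\infty\|_{L^2}^2=4\pi$ is contradictory and the case $\|\nabla u_\infty\|_{L^2}^2<4\pi$ is contradictory, hence $u_\infty=0$'' is a non sequitur, since $u_\infty=0$ falls under the second case; taken literally, your argument would show that no sequence satisfying the hypotheses exists, which is absurd (concentrating Moser sequences satisfy them). The repair is immediate, and is exactly how the paper uses Lions in the proof of Lemma \ref{lemma6.4}: suppose $u_\infty\neq 0$; applying Lions to $u_k/\|\nabla u_k\|_{L^2}$, whose weak limit is nonzero, yields $\sup_k\int_\Omega e^{qu_k^2}dx<\infty$ for some $q>1$, so Vitali gives $E(u_k)\to E(u_\infty)\le c_{4\pi}(\Omega)<c$, a contradiction; hence $u_\infty=0$, and the rest of your proof goes through unchanged.
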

\begin{proof} We may assume that $u_k \overset{w}{\rightharpoondown} u$
weakly in $H^1_0(\Omega)$ and pointwise almost everywhere as $k\rightarrow \infty$.
Negating our claim, there exist $\alpha_1, r_1 >0$ with $\alpha_1 < 4\pi$ such that
\begin{equation*}
  \sup_{k \in {\mathbb N}, \; x_1\in \Omega}
  \int_{B_{r_1}(x_1)\cap \Omega} |\nabla u_k|^2 \, dx \le \alpha_1 .
\end{equation*}
But then by a reasoning as in the proof of Lemma 3.3 in \cite{Adimurthi-Struwe} 
we conclude that the functions $e^{u_k^2}$ are uniformly bounded in $L^q$ for 
some $q > 1$, and by Vitali's convergence theorem we have
\begin{equation*}
  E(u) =\lim_{k \rightarrow \infty} E(u_k) \ge c > c_{4\pi}(\Omega).
\end{equation*}
Since $\int_{\Omega} |\nabla u|^2 \, dx \le 4\pi$, the latter 
contradicts \eqref{6.3}, which proves our claim. 
\end{proof}

The proof of Theorem \ref{thm6.1} now is achieved via a saddle-point construction
similar to Section 3.4 in \cite{Struwe2000}.
We may assume that $0 < R_1 < 1/2$. Given such $R_1$, fix $R = R_1/4$.
For each $R_2 < R_1/8 = R/2$, moreover, we let $\tau = \tau_{R_2} \in
C^{\infty}_0 (B_R (0))$ be a cut-off function $0 \le \tau \le 1$
satisfying $\tau \equiv 1$ on $B_{R_2} (0)$ and such that $\tau
\rightarrow 0$ in $H^1 ({\mathbb R}^2)$ as $R_2 \rightarrow 0$. 

For $x_0 \in {\mathbb R}^2$ let $m_{\rho, R, x_0} (x) = m_{\rho, R}(x - x_0)$. 
With a suitable number $0 < \rho < R$ to be determined, for any $x_0$
with $|x_0| = 3R$, any $0 \le s < 1$ then we define
$$
  v_{s, x_0} (x) = m_{s \rho, R, (1 - s) x_0} (x) (1 - \tau (x)) \in 
  H^1_0(B_{R_1} \setminus B_{R_2} (0)).
$$
Provided that $\Omega$ contains the annulus $B_{R_1} \setminus B_{R_2} (0)$, these
functions then also belong to $H^1_0(\Omega)$.

Given $c^* > c_*$, we fix the numbers $0 < \rho < R$, $0 < R_2 < R/2$ so that 
\begin{equation} \label{6.6}
  \inf_{0 < s \le 1,\, |x_0|=3R}\big(\frac12
  \int_{\Omega}(e^{8\pi v_{s,x_0}^2}-1)\, dx \big) > c^*
\end{equation}
for all such domains $\Omega$. This is possible by \eqref{6.5}. 
Fixing such a domain $\Omega$, finally, for any given $c_{4\pi}(\Omega)< c_0< c^*$ 
we let 
$$
  w_{s, x_0} = \sqrt{\alpha_{s, x_0}} v_{s, x_0},
$$
where for each $s, x_0$ the number $\alpha_{s, x_0}$ is uniquely 
determined such that 
$$
  E(w_{s, x_0}) = \frac12 \int_{\Omega} (e^{\alpha_{s, x_0}v_{s, x_0}^2}-1) \, dx = c_0.
$$
Observe that \eqref{6.3} and \eqref{6.6} imply the bounds 
$4\pi < \alpha_{s, x_0} < 8\pi$ for each $s, x_0$, and
\begin{equation}\label{6.7}
    \alpha_{s, x_0} \rightarrow 4\pi \hbox{ as } s\rightarrow 0
\end{equation}
uniformly in $|x_0| = 3 R$ by \eqref{6.5}. 

Let $u_{s, x_0}(t)$ be the solution to the initial value 
problem \eqref{1.1} - \eqref{1.5} with initial data $u_{s, x_0}(0) = w_{s, x_0}\ge 0$. 

\begin{lemma}\label{lemma6.2}
With a uniform constant $\alpha_0 > 4\pi$ there holds
\begin{equation}\label{6.8}
  \sup_{0 < s \le 1, |x_0| = 3 R }
  \int_{\Omega} |\nabla u_{s, x_0}(t)|^2 \, dx \ge \alpha_0  
\end{equation}
for all $0 \le t < \infty$.
\end{lemma}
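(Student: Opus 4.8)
\emph{Plan of proof.} Write $u_{s,x_0}(t)$ for the solution of \eqref{1.1}--\eqref{1.5} with $u_{s,x_0}(0)=w_{s,x_0}$, and put $\beta(t)=\sup_{0<s\le1,\,|x_0|=3R}\int_\Omega|\nabla u_{s,x_0}(t)|^2\,dx$. Since $E(u_{s,x_0}(t))=c_0>c_{4\pi}(\Omega)$ for all $t$ by \eqref{1.5}, the definition of $c_{4\pi}(\Omega)$ forces $\int_\Omega|\nabla u_{s,x_0}(t)|^2\,dx>4\pi$, and hence $\beta(t)>4\pi$, for every $t\ge0$. Thus \eqref{6.8} is exactly the assertion $\inf_{t\ge0}\beta(t)>4\pi$, which I prove by contradiction: otherwise there are times $t_n\ge0$ with $\varepsilon_n:=\beta(t_n)-4\pi\to0$, so that at $t_n$ \emph{every} function $u_{s,x_0}(t_n)$ satisfies $4\pi<\int_\Omega|\nabla u_{s,x_0}(t_n)|^2\,dx<4\pi+\varepsilon_n$ together with $E(u_{s,x_0}(t_n))=c_0>c_{4\pi}(\Omega)$.

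The first step is a uniform form of Lemma \ref{lemma6.1}: for each $\eta>0$ there is $\delta>0$ such that any $v\in H^1_0(\Omega)$ with $E(v)\ge c_0$ and $\|\nabla v\|_{L^2}^2\le4\pi+\delta$ has barycenter $\xi(v):=\|\nabla v\|_{L^2}^{-2}\int_\Omega x\,|\nabla v|^2\,dx$ within distance $\eta$ of $\overline\Omega$. Indeed, a violating sequence $v_k$ has $\|\nabla v_k\|_{L^2}^2\to4\pi$, $E(v_k)\ge c_0$, and $\xi(v_k)$ bounded away from $\overline\Omega$; passing to a weak limit $v$ and running the dichotomy from the proof of Lemma \ref{lemma6.1} (no concentration, partial concentration below the level $4\pi$, or $\|\nabla v\|_{L^2}^2=4\pi$ with resulting strong convergence all yield $E(v_k)\to E(v)\le c_{4\pi}(\Omega)<c_0$), the only surviving possibility is $|\nabla v_k|^2\,dx\overset{w^*}{\rightharpoondown}4\pi\,\delta_{x_\ast}$ with $x_\ast\in\overline\Omega$, forcing $\xi(v_k)\to x_\ast\in\overline\Omega$, a contradiction. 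Now fix $\eta<\operatorname{dist}(0,\overline\Omega)$ and take $n$ so large that $\varepsilon_n<\delta(\eta)$; then
\begin{equation*}
  \Xi_n(s,x_0):=\xi\big(u_{s,x_0}(t_n)\big)
\end{equation*}
defines a continuous map $(0,1]\times\{|x_0|=3R\}\to{\mathbb R}^2\setminus\{0\}$, continuity being inherited from the continuous dependence of the flow at the fixed time $t_n$ on its initial datum and from the continuity of $(s,x_0)\mapsto w_{s,x_0}$ in $H^1_0(\Omega)$ and of $\alpha_{s,x_0}$. Since $w_{1,x_0}$, and hence $u_{1,x_0}(t_n)$, is independent of $x_0$, the loop $x_0\mapsto\Xi_n(1,x_0)$ is constant and so has winding number $0$ about the origin; as all the loops $x_0\mapsto\Xi_n(s,x_0)$, $0<s\le1$, are homotopic in ${\mathbb R}^2\setminus\{0\}$, each has winding number $0$.

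The contradiction is produced at small $s$. Integrating \eqref{1.6} and using $\int_\Omega|\nabla u_{s,x_0}(t)|^2\,dx>4\pi$ gives the refined dissipation bound
\begin{equation*}
  \mathcal D_{s,x_0}:=\int_0^\infty\!\!\int_\Omega u_t^2 e^{u^2}\,dx\,dt\le\frac12\big(\alpha_{s,x_0}-4\pi\big),
\end{equation*}
which by \eqref{6.7} tends to $0$ as $s\to0$, uniformly in $|x_0|=3R$. Differentiating $\int_\Omega x_j|\nabla u|^2\,dx$ in $t$, integrating by parts (using $u_t=0$ on $\partial\Omega$) and inserting $\Delta u=(u_t-\lambda u)e^{u^2}$ from \eqref{1.1}, one obtains — using $e^{u^2}\ge1$, the identity $\lambda\int_\Omega u^2e^{u^2}\,dx=\int_\Omega|\nabla u|^2\,dx<8\pi$, and $\lambda\le\lambda_0$ from \eqref{2.4} —
\begin{equation*}
  \Big|\frac{d}{dt}\int_\Omega x_j|\nabla u|^2\,dx\Big|\le C\Big(\int_\Omega u_t^2 e^{u^2}\,dx+\Big(\int_\Omega u_t^2 e^{u^2}\,dx\Big)^{1/2}\Big),\qquad C=C(\operatorname{diam}\Omega,\lambda_0).
\end{equation*}
Integrating over $[0,t_n]$ and applying the Cauchy--Schwarz inequality in $t$ bounds the displacement of $\int_\Omega x\,|\nabla u_{s,x_0}(t_n)|^2\,dx$ from its value at $t=0$ by $C(\mathcal D_{s,x_0}+\sqrt{t_n\,\mathcal D_{s,x_0}})$. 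For $s<1/3$ the bump of $v_{s,x_0}$ is disjoint from $\operatorname{supp}\tau$, hence $w_{s,x_0}$ is radially symmetric about $(1-s)x_0$ and $\int_\Omega x\,|\nabla w_{s,x_0}|^2\,dx=\alpha_{s,x_0}(1-s)x_0$; together with $\big|\alpha_{s,x_0}-\int_\Omega|\nabla u_{s,x_0}(t_n)|^2\,dx\big|\le2\mathcal D_{s,x_0}$ and $\int_\Omega|\nabla u_{s,x_0}(t_n)|^2\,dx>4\pi$ this yields $|\Xi_n(s,x_0)-x_0|\le C'(\mathcal D_{s,x_0}+\sqrt{t_n\,\mathcal D_{s,x_0}})+3Rs$. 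With $n$ fixed, choose $s=s_n\in(0,1/3)$ so small — possible since $t_n$ is fixed and $\sup_{|x_0|=3R}\mathcal D_{s,x_0}\to0$ as $s\to0$ — that this bound is $<2R$ for all $|x_0|=3R$. Then the straight-line homotopy from $x_0\mapsto\Xi_n(s_n,x_0)$ to $x_0\mapsto x_0$ avoids the origin, so $x_0\mapsto\Xi_n(s_n,x_0)$ has winding number $1$, contradicting the previous paragraph; this establishes \eqref{6.8}. The main difficulty is precisely the estimate in this paragraph: one must dominate the motion of the gradient barycenter along the flow up to the fixed but possibly very large time $t_n$ by the dissipation, so that the small dissipation budget available for small $s$ wins and the degree argument closes; the uniform concentration statement and the topological bookkeeping are comparatively routine.
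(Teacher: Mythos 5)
Your proof is correct, and it shares the paper's overall skeleton -- argue by contradiction, use Lemma \ref{lemma6.1} (made uniform by a routine compactness argument) to keep the gradient barycenter away from the origin, exploit that the loop at $s=1$ is constant while the loop at small $s$ winds once around $0$, and conclude by homotopy invariance of the winding number -- but you close the small-$s$ end of the argument by a genuinely different route. The paper never estimates where the barycenter sits at a late time for small $s$: it observes that for small $s$ the Dirichlet energy stays below $4\pi+\delta$ for \emph{all} times (by \eqref{6.7} and monotonicity from \eqref{1.6}/\eqref{2.2}), so by Lemma \ref{lemma6.1} the barycenter merely avoids $0$ along the whole flow at a fixed small parameter $s_0$; it then builds a three-leg homotopy in the $(s,t)$-square (initial data for $s\le s_0$, the flow at $s=s_0$ up to a large time $T$, then $s\in[s_0,1]$ at time $T$), whose two ends are the near-identity loop and a constant loop. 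You instead stay at the single late time $t_n$ furnished by the contradiction hypothesis and prove a quantitative substitute: differentiating $\int_\Omega x_j|\nabla u|^2\,dx$, integrating by parts (using $u_t=0$ on $\partial\Omega$), and inserting \eqref{1.1}, \eqref{1.3}, \eqref{2.4} gives a bound on the barycenter's displacement by $C\big(\mathcal D+\sqrt{t_n\mathcal D}\big)$, while \eqref{1.6} together with $\|\nabla u(t)\|_{L^2}^2>4\pi$ and \eqref{6.7} makes the dissipation budget $\mathcal D_{s,x_0}\le\tfrac12(\alpha_{s,x_0}-4\pi)$ tend to $0$ as $s\to0$ uniformly in $x_0$; since $t_n$ is fixed before $s_n$ is chosen, the loop at $s=s_n$ is degree one and the contradiction follows. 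I checked the displacement estimate (all three terms are controlled as you claim, with constants uniform in $(s,x_0,t)$ because $\alpha_{s,x_0}<8\pi$ and $\lambda\le\lambda_0$), the radial-symmetry computation of the initial barycenter for $s<1/3$, and the uniform version of Lemma \ref{lemma6.1}; they are all sound. What your route buys is that it needs the concentration information only at the one time $t_n$, at the price of the extra dissipation computation; the paper's route is softer, requiring no quantitative tracking of the barycenter along the flow, but it needs the uniform-in-time concentration statement for small $s$ and the more elaborate parameter-space homotopy. Both proofs rest on the same continuity of $(s,x_0)\mapsto u_{s,x_0}(t)$, which you, like the paper, may take from continuous dependence for the flow.
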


\begin{proof}
Otherwise by \eqref{1.6} we have $||\nabla u_{s, x_0}(t)||^2_{L^2} \rightarrow 4 \pi$ as 
$t \rightarrow \infty$,
uniformly in $s$ and $x_0$, and from Lemma \ref{lemma6.1} we conclude that 
$$
    \sup_{0 < s \le 1, |x_0| = 3R }\operatorname{dist} (m (u_{s, x_0}(t)), \Omega)
    \rightarrow 0
$$
as $t \rightarrow \infty$, where
$$
  m (u) = \frac{\int_{\Omega} x |\nabla u|^2 \, dx}{\int_{\Omega}|\nabla u|^2 \, dx}
$$
is the center of mass. Moreover, by \eqref{6.7}, \eqref{2.2}, and 
Lemma \ref{lemma6.1} we have
$$
    \sup_{0 < s \le s_0,\, |x_0| = 3 R }
    \operatorname{dist} (m (u_{s, x_0}(t)), \Omega) \rightarrow 0
$$
as $s_0 \rightarrow 0$, uniformly in $t \ge 0$.
Recall that $0 \notin \overline{\Omega}$. Thus, for some sufficiently small number 
$0<s_0<1$ and sufficiently large $T >0$ with a uniform constant $\delta > 0$ we have 
$$
   \inf_{ |x_0| = 3 R } |m (u_{s, x_0}(t))| \ge \delta > 0,
$$
provided that either $0 < s \le s_0$ or $t \ge T$. 
Identifying $\partial B_{3R} (0)$ with $S^1$ and letting 
$\pi_{S^1}(p)=p/|p|$ for $p \in {\mathbb R}^2\setminus \{0\}$,
then for sufficiently small $0<s_0<1$ and sufficiently large $T>0$ we can define a 
homotopy $H = H (\cdot , r) \colon S^1\times ]0,T+1] \rightarrow S^1$ by letting
\begin{equation*}
  H (x_0 , r) =
  \begin{cases}
   \pi_{S^1}(m (u_{r, x_0}(0))), & 0 < r \le s_0,\\
   \pi_{S^1}(m (u_{s_0, x_0}(r-s_0))), &s_0 \le r \le T+s_0,\\
   \pi_{S^1}(m (u_{r-T, x_0}(T))), & T+s_0\le r \le T+1.
    \end{cases}
\end{equation*}
Then clearly $H(\cdot ,T+1) \equiv const$, whereas $H(x_0 ,r) \rightarrow x_0/|x_0|$
as $r \rightarrow 0$, uniformly in $x_0$, which is impossible.
The contradiction proves the claim.
\end{proof}

\begin{proof}[Proof of Theorem \ref{thm6.1}]
For any $t>0$ by Lemma \ref{lemma6.2} there are $0 < s(t) \le 1$, $x_0(t)$ 
with $|x_0(t)|=3R$ such that 
\begin{equation}\label{6.9}
  \int_{\Omega} |\nabla u_{s(t), x_0(t)}(t)|^2 \, dx \ge \alpha_0 > 4 \pi .
\end{equation}
Let $(s_1,x_0)$ be a point of accumulation of $(s(t),x_0(t))$ as $t\rightarrow \infty$.
Note that by \eqref{1.6} for any fixed time $t_0$ we have 
\begin{equation} \label{6.10}
  \begin{split}
   8\pi & > \alpha_{s_1,x_0} = \int_{\Omega} |\nabla u_{s_1, x_0}(0)|^2 \, dx 
   \ge \int_{\Omega} |\nabla u_{s_1, x_0}(t_0)|^2 \, dx\\ 
   & \ge \liminf_{t \rightarrow \infty}\int_{\Omega}|\nabla u_{s(t),x_0(t)}(t_0)|^2 \, dx
   \ge \liminf_{t \rightarrow \infty}\int_{\Omega} |\nabla u_{s(t), x_0(t)}(t)|^2 \, dx
   \ge \alpha_0 > 4 \pi.
  \end{split}
\end{equation}
Fix $u_0 = u_{s_1, x_0}(0)\ge 0$ and let $u(t)$ be the solution to the initial value 
problem \eqref{1.1} - \eqref{1.5} with initial data $u(0) = u_0$ with 
associated parameter $\lambda(t)$. 
We claim that $u(t)$ is uniformly bounded and hence converges to 
a solution $u_{\infty} > 0$ of \eqref{1.01} with 
$$
  \int_{\Omega} |\nabla u_{\infty}|^2 \, dx  > 4 \pi \ \hbox{ and } 
  E(u_{\infty}) = c_0.
$$
This will finish the proof of the Theorem. 

Indeed, suppose by contradiction that 
$u(t)$ blows up as $t \rightarrow \infty$. For a sequence of numbers 
$t_k \rightarrow \infty$ as constructed in Lemma \ref{lemma4.1} then 
as $k \rightarrow \infty$ we have
$\lambda_k:= \lambda(t_k)\rightarrow\lambda_{\infty}\ge 0$; moreover, we may assume that
$u_k:=u(t_k) \overset{w}{\rightharpoondown} u_{\infty}$ in $H^1_0(\Omega)$ and
pointwise almost everywhere, where 
$u_{\infty}$ solves \eqref{1.01}. Finally, Theorem \ref{thm1.1} and \eqref{6.10}
also give the bound
\begin{equation}\label{6.12}
  \int_{\Omega} |\nabla u_{\infty}|^2 \, dx < 4 \pi .
\end{equation}
It then follows that $\lambda_{\infty} = 0$. Indeed, if we assume $\lambda_{\infty} > 0$,
from \eqref{2.2} and the dominated convergence theorem we infer 
$$
  E(u_{\infty})  = \lim_{k \rightarrow \infty} E(u_k) =c_0 > c_{4\pi}(\Omega),
$$
which is impossible in view of \eqref{6.12} and \eqref{6.3}. 
But with $\lambda_{\infty} = 0$ in view of \eqref{1.01} also $u_{\infty}$ must vanish 
identically, and from Theorem \ref{thm1.1} it follows that
\begin{equation}\label{6.13}
  \lim_{k \rightarrow \infty}\int_{\Omega} |\nabla u_k|^2 \, dx = 4 \pi l,
\end{equation}
for some $l \in {\mathbb N}$, contradicting \eqref{6.10}. The proof is complete.
\end{proof}

\subsection{Saddle points of the Moser-Trudinger energy}
Finally, we establish Theorem \ref{thm1.3}. Recall that by \cite{Flucher}, Corollary 7,
on any bounded domain $\Omega \subset {\mathbb R}^2$ 
the Moser-Trudinger energy $E$ attains its maximum $\beta^*_{4\pi} := c_{4\pi}(\Omega)$
in the set $M_{4\pi}$ defined in \eqref{1.00}. Moreover, we have

\begin{lemma}\label{lemma6.4}
The set $K_{4\pi}$ of maximizers of $E$ in $M_{4\pi}$
is compact.
\end{lemma}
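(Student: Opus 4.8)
The plan is to establish sequential compactness of $K_{4\pi}$ in $H^1_0(\Omega)$. Let $(u_k)\subset K_{4\pi}$ be arbitrary. Since $\|\nabla u_k\|_{L^2}^2=4\pi$, after passing to a subsequence I may assume $u_k\rightharpoondown u_\infty$ weakly in $H^1_0(\Omega)$ and pointwise a.e., with $u_\infty\ge 0$ and $\|\nabla u_\infty\|_{L^2}^2\le 4\pi$ by lower semicontinuity. Note that $E(u_k)\equiv\beta^*_{4\pi}=c_{4\pi}(\Omega)>0$, so $(u_k)$ is a maximizing sequence for $E$ on $M_{4\pi}$, and each $u_k$ solves the Euler--Lagrange equation $-\Delta u_k=\lambda_k u_k e^{u_k^2}$ in $\Omega$ for some $\lambda_k>0$; testing with $u_k$ and invoking \eqref{2.3} gives the uniform bound $0<\lambda_k\le 8\pi/\beta^*_{4\pi}$, so on a further subsequence $\lambda_k\to\lambda_\infty\ge 0$. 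The whole argument then reduces to the alternative ``$u_\infty\equiv 0$'' versus ``$u_\infty\not\equiv 0$''.

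The key step, which I expect to be the main obstacle, is to exclude $u_\infty\equiv 0$. Assume $u_\infty\equiv 0$. By the $\varepsilon$-regularity / covering dichotomy used in the proof of Lemma \ref{lemma6.1} (following Lemma 3.3 of \cite{Adimurthi-Struwe}), either $e^{u_k^2}$ is bounded in $L^q(\Omega)$ for some $q>1$ --- in which case Vitali's theorem would force $E(u_k)\to E(0)=0$, contradicting $E(u_k)\equiv\beta^*_{4\pi}>0$ --- or $(u_k)$ concentrates at a point $x_0\in\overline\Omega$, its entire Dirichlet energy $4\pi$ collapsing into a single standard bubble there (the blow-up description being that of Adimurthi--Struwe \cite{Adimurthi-Struwe}, cf.\ Theorem \ref{thm4.1}). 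In the concentration case the sharp local Carleson--Chang estimate at $x_0$ --- precisely the strict inequality $c_{4\pi}(\Omega)>(\text{concentration level})$ that underlies the attainment of $c_{4\pi}(\Omega)$ in Carleson--Chang \cite{Carleson-Chang} and Flucher \cite{Flucher} --- gives $\limsup_{k\to\infty}E(u_k)<c_{4\pi}(\Omega)$, again contradicting $E(u_k)\equiv\beta^*_{4\pi}$. Although this is the substantive point, it requires no new work: it is a restatement of the strict inequality already proven in the quoted references, on which the attainment result cited just above Lemma \ref{lemma6.4} already rests.

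Therefore $u_\infty\not\equiv 0$, and in particular $(u_k)$ does not concentrate, so by the dichotomy above $e^{u_k^2}$ is bounded in $L^q(\Omega)$ for some $q>1$. Since $e^{u_k^2}-1\to e^{u_\infty^2}-1$ pointwise a.e.\ and this family is uniformly integrable, Vitali's theorem gives $E(u_\infty)=\lim_{k\to\infty}E(u_k)=c_{4\pi}(\Omega)$. The map $t\mapsto E(tu_\infty)=\frac12\int_\Omega(e^{t^2u_\infty^2}-1)\,dx$ has derivative $t\int_\Omega u_\infty^2 e^{t^2u_\infty^2}\,dx>0$ for $t>0$, hence is strictly increasing; since $c_{4\pi}(\Omega)=\sup\{E(u):u\in H^1_0(\Omega),\ u\ge 0,\ \|\nabla u\|_{L^2}^2\le 4\pi\}$ and $\|\nabla u_\infty\|_{L^2}^2\le 4\pi$, the equality $E(u_\infty)=c_{4\pi}(\Omega)$ forces $\|\nabla u_\infty\|_{L^2}^2=4\pi$ --- otherwise $E(tu_\infty)>E(u_\infty)$ for a suitable still admissible $t>1$, a contradiction. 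Hence $u_\infty\in M_{4\pi}$ with $E(u_\infty)=\beta^*_{4\pi}$, i.e.\ $u_\infty\in K_{4\pi}$; and $\|\nabla u_k\|_{L^2}^2=4\pi=\|\nabla u_\infty\|_{L^2}^2$ together with $u_k\rightharpoondown u_\infty$ weakly in the Hilbert space $H^1_0(\Omega)$ yields strong convergence $u_k\to u_\infty$ in $H^1_0(\Omega)$. Thus every sequence in $K_{4\pi}$ has a subsequence converging in $H^1_0(\Omega)$ to an element of $K_{4\pi}$, which is the claimed compactness.
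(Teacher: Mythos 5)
Your proof is correct, but it is organized around a different case distinction than the paper's. The paper splits on the limit $\lambda_\infty$ of the Lagrange multipliers: for $\lambda_\infty>0$ it uses \eqref{2.2} and dominated convergence to get $E(u_k)\to E(u_\infty)=\beta^*_{4\pi}$, hence $u_\infty\neq 0$, and then invokes Lions \cite{Lions}, Theorem I.6, for the $L^q$-bound on $e^{u_k^2}$ and strong convergence (via the equation \eqref{1.01}); for $\lambda_\infty=0$ it concludes $u_\infty\equiv 0$ from the equation, gets concentration $|\nabla u_k|^2dx\overset{w^*}{\rightharpoondown}4\pi\delta_{x_0}$ from Lions, and derives the contradiction from Flucher's strict concentration-level inequality. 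You instead split on $u_\infty\equiv 0$ versus $u_\infty\not\equiv 0$, never use the multipliers at all (your bound $\lambda_k\le 8\pi/\beta^*_{4\pi}$ is vestigial), exclude $u_\infty\equiv 0$ by the same two external inputs (the covering dichotomy as in Lemma \ref{lemma6.1} and the Carleson--Chang/Flucher strict inequality \cite{Flucher}), and then close variationally: Vitali gives $E(u_\infty)=c_{4\pi}(\Omega)$, the strict monotonicity of $t\mapsto E(tu_\infty)$ forces $\|\nabla u_\infty\|_{L^2}^2=4\pi$, and weak convergence plus convergence of norms gives strong convergence in the Hilbert space, replacing the paper's PDE-based compactness step. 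The trade-off: your route is slightly more self-contained (it proves compactness of the maximizer set without invoking the Euler--Lagrange structure, and Vitali replaces the use of \eqref{2.2}), while the paper's is shorter given Lions' theorem and reuses machinery already deployed elsewhere in Section 6. The only small imprecision on your side is cosmetic: in the concentration horn you do not need the full Adimurthi--Struwe ``standard bubble'' description, only the measure-level concentration $|\nabla u_k|^2dx\overset{w^*}{\rightharpoondown}4\pi\delta_{x_0}$ (obtained, strictly speaking, after passing to a further subsequence in the covering argument), which is exactly the hypothesis under which Flucher's strict bound applies.
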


\begin{proof}
Any $u \in K_{4\pi}$ solves \eqref{1.01}. 
Given a sequence $(u_k) \subset K_{4\pi}$, we may assume that 
$u_k \rightharpoondown u_{\infty}$ weakly in $H^1_0(\Omega)$ 
as $k \rightarrow \infty$ while by \eqref{2.3} the associated numbers 
$\lambda_k \rightarrow \lambda_{\infty}\ge 0$. If $\lambda_{\infty}> 0$, from \eqref{2.2} 
and the dominated convergence theorem as above we conclude that
$E(u_k) \rightarrow E(u_{\infty})$, so that $E(u_{\infty})=\beta^*_{4\pi}$
and $u_{\infty}\neq 0$. But by a result of P.-L.~Lions \cite{Lions}, Theorem I.6, this
implies that the functions $e^{u_k^2}$ are uniformly bounded in $L^q$ for 
some $q > 1$, and $u_k \rightarrow u_{\infty}$ strongly in $H^1_0(\Omega)$, as claimed.
On the other hand, if $\lambda_{\infty}= 0$, from \eqref{1.01} we conclude that also
$u_{\infty}$ must vanish and $E(u_{\infty})=0$. Theorem I.6 in \cite{Lions} then implies 
weak convergence 
\begin{equation}\label{6.15}
  |\nabla u_k|^2dx \overset{w^*}{\rightharpoondown} 4\pi \delta_{x_0}
\end{equation}
for some $x_0 \in \overline{\Omega}$ in the sense of measures, 
and by Flucher \cite{Flucher}, Lemma 4 and Theorem 5, we have
$E(u_k)<\beta^*_{4\pi}$ for large $k$, contradicting our choice of $(u_k)$.
\end{proof}

In view of Lemma \ref{lemma6.4} now 
Lemma 5.3 from \cite{Struwe84} remains valid for a general domain and there
exist numbers $\alpha^* > 4\pi$, $\varepsilon > 0$ such that for any
$4\pi< \alpha < \alpha^*$ there holds 
\begin{equation*}
  \beta^*_{\alpha} := \sup_{N_{\alpha,\varepsilon}}E > 
  \sup_{N_{\alpha,2\varepsilon}\setminus N_{\alpha,\varepsilon}}E
\end{equation*}
where 
\begin{equation*}
   N_{\alpha,\varepsilon} = \{u \in M_{\alpha}; \
    \exists v \in K_{4\pi}: ||\nabla (u-v)||_{L^2} < \varepsilon\}\, .
\end{equation*}
Moreover, for any such $\alpha$ there exists $\overline{u} \in N_{\alpha,\varepsilon}$ where 
$\beta^*_{\alpha}=E(\overline{u})$ is attained, and $\overline{u}$ solves \eqref{1.01}
for some $\overline{\lambda} \ge 0$. By \eqref{6.4} the set 
\begin{equation*}
  \Gamma_{\alpha} = \{\gamma \in C^0([0,1[;M_{\alpha}); 
  \gamma(0) = \overline{u},\, E(\gamma(1)) > \beta^*_{\alpha} \},
\end{equation*}
then is non-void for any $4\pi < \alpha < \alpha^*$. Since any $\gamma \in\Gamma_{\alpha}$
necessarily passes through the set $N_{\alpha,2\varepsilon}\setminus N_{\alpha,\varepsilon}$ we have
\begin{equation}\label{6.16}
  \beta_{\alpha} := \sup_{\gamma \in \Gamma_{\alpha}} \inf_{0 < s < 1} E(\gamma(s)) 
  < \beta^*_{\alpha}.
\end{equation}
Finally, observing that 
\begin{equation*}
  c_{\alpha}=\sup_{u \in M_{\alpha}}E(u) \rightarrow 0 \hbox{ as }
  \alpha \rightarrow 0 \, ,
\end{equation*}
we can choose $4\pi < \alpha_1 < \alpha^*$ such that 
\begin{equation}\label{6.17}
  c_{\alpha-4\pi} < \beta_{\alpha} \hbox{ for all } \alpha \in ]4\pi,\alpha_1[.
\end{equation}
Clearly, we may assume that $\alpha_1 \le 8\pi$. 

\begin{proof}[Proof of Theorem \ref{thm1.3}]
Let $4\pi< \alpha < \alpha_1$. It remains to find $\underline{u}$. Fix some 
$\gamma \in \Gamma_{\alpha}$ with 
\begin{equation*}
    \inf_{0 < s < 1} E(\gamma(s))> c_{\alpha-4\pi}.
\end{equation*}

Fix a number $\beta$ with $\beta_{\alpha} < \beta < \beta^*_{\alpha}$. 
As long as $E(u(s,t)) \le \beta$ let $u(s,t)\ge 0$ be the solution to the initial 
value problem \eqref{1.1}, \eqref{1.2}, \eqref{0.1} with initial data 
$u(s,0) = \gamma(s)\ge 0$, and let $u(s,t)=u(s,t(s))$ for 
all $t \ge t(s)$ if there is some first $t(s) \ge 0$ where $E(u(s,t(s))) = \beta$. 
Note that by the implicit function theorem the family $u(s,t)$ thus defined depends
continuously both on $s$ and $t$ unless $u_t(s,t(s))=0$ for some $s$ with 
$E(u(s,t(s))) = \beta$, that is, unless there is a solution $0 < u \in M_{\alpha}$ 
of \eqref{1.01} with $E(u) = \beta$, in which case the proof is complete.

For $t>0$ let $0\le s(t) <1$ be such that 
\begin{equation*}
  E(u(s(t),t)) = \inf_{0 < s < 1}E(u(s,t))\le \beta_{\alpha}
\end{equation*}
and let $s_1$ be a point of accumulation of $(s(t))_{t>0}$ as $t\rightarrow \infty$.
Note that similar to \eqref{6.10} by \eqref{0.2} for any fixed time $t_0$ we have 
\begin{equation*}
  \begin{split}
   E(u(s_1,t_0)) \le \limsup_{t \rightarrow \infty}E(u(s(t),t_0)) 
   \le \limsup_{t \rightarrow \infty}E(u(s(t),t)) \le \beta_{\alpha}.
  \end{split}
\end{equation*}
Fix $u_0 = \gamma(s_1)\ge 0$ and let $u(t)$ with associated parameter $\lambda(t)$
be the solution to the initial value problem \eqref{1.1}, \eqref{1.2}, \eqref{0.1}
with initial data $u(0) = u_0$, satisfying 
\begin{equation} \label{6.18}
   c_{\alpha-4\pi} < E(\gamma(s_1)) = E(u(0)) \le E(u(t)) 
   \le \beta_{\alpha} < \beta < \beta^*_{\alpha} \quad
   \hbox{for all } t. 
\end{equation}
We claim that $u(t)$ is uniformly bounded and thus converges to 
a solution $0<u_{\infty}\in M_{\alpha}$ of \eqref{1.01} with
$0 < E(u_{\infty}) < \beta^*_{\alpha}$.
For this we argue as in the  proof of Theorem \ref{thm6.1}.

Indeed, suppose by contradiction that 
$u(t)$ blows up as $t \rightarrow \infty$. For a sequence of numbers 
$t_k \rightarrow \infty$ as constructed in Lemma \ref{lemma4.1} then 
as $k \rightarrow \infty$ we have
$\lambda_k:= \lambda(t_k)\rightarrow\lambda_{\infty}\ge 0$; moreover, we may assume that
$u_k:=u(t_k) \overset{w}{\rightharpoondown} u_{\infty}$ in $H^1_0(\Omega)$ and
pointwise almost everywhere, where 
$u_{\infty}\ge 0$ solves \eqref{1.01} with $||\nabla u_{\infty}||_{L^2}^2 \le \alpha-4\pi$
in view of Theorem \ref{thm1.1}. 
But then $\lambda_{\infty} = 0$.
Indeed, if $\lambda_{\infty} > 0$, from \eqref{0.5} and the dominated convergence 
theorem we infer $E(u_k) \rightarrow E(u_{\infty})\le c_{\alpha-4\pi}$, 
contradicting \eqref{6.18}.
But with $\lambda_{\infty} = 0$ in view of \eqref{1.01} also $u_{\infty}$ must vanish 
identically, and Theorem \ref{thm1.1} yields the contradiction $\alpha = 4 \pi$.
The proof is complete.
\end{proof}

\end{document}